\numberwithin{equation}{section}
\newtheorem{Theorem}{Theorem}[section]
\newtheorem{thmx}{Theorem}
\newtheorem*{Theorem*}{Theorem}
\newtheorem{Corollary}[Theorem]{Corollary}
\newtheorem{Lemma}[Theorem]{Lemma}
\newtheorem{Proposition}[Theorem]{Proposition}
\newtheorem*{Proposition*}{Proposition}
\theoremstyle{definition}
\newtheorem{Definition}[Theorem]{Definition}
\newtheorem{Example}[Theorem]{Example}
\newtheorem{remark}[Theorem]{Remark}
\newtheorem{Question}[Theorem]{Question}
\newcommand{\calH}{\mathcal{H}}
\newcommand{\calG}{\mathcal{G}}
\newcommand{\G}{\Gamma}
\newcommand{\R}{\mathbb{R}}
\newcommand{\Z}{\mathbb{Z}}
\newcommand{\F}{\mathbb{F}}
\newcommand{\C}{\mathbb{C}}
\newcommand{\M}{\mathbb{M}}
\renewcommand{\H}{\mathbb{H}}
\DeclareMathOperator{\lcm}{lcm}
\renewcommand{\Im}{i}
\newcommand{\defi}[1]{\textbf{\textsf{#1}}}
\DeclareMathOperator{\Aut}{Aut}
\DeclareMathOperator{\SL}{SL}
\DeclareMathOperator{\Ch}{Ch}
\DeclareMathOperator{\tr}{tr}
\newcommand{\gbar}{\bar{g}}
\title{Replicable functions arising from code-lattice VOAs fixed by automorphisms}
\author{Lea Beneish}
\address{Department of Mathematics, University of California, Berkeley, Berkeley, CA 94720}
\email{leabeneish@math.berkeley.edu}
\urladdr{}
\author{Jennifer Berg}
\address{Department of Mathematics, Bucknell University, Lewisburg, PA 17837}
\email{jsb047@bucknell.edu}
\urladdr{\url{https://sites.google.com/view/jenberg}}
\author{Eva Goedhart}
\address{Department of Mathematics, Franklin \& Marshall College, Lancaster, PA 17604}
\email{eva.goedhart@fandm.edu}
\urladdr{\url{https://www.evagoedhartphd.com/}}
\author{Hussain M. Kadhem}
\address{Department of Mathematics, University of California, Berkeley, Berkeley, CA 94720}
\email{hmk@math.berkeley.edu}
\urladdr{}
\author{Allechar Serrano L\'opez}
\address{Department of Mathematics, Harvard University, Cambridge, MA 02138}
\email{serrano@math.harvard.edu}
\urladdr{\url{https://allechar.org/}}
\author{Stephanie Treneer}
\address{Department of Mathematics, Western Washington University, Bellingham, WA 98225}
\email{trenees@wwu.edu}
\urladdr{}
\begin{document}
\maketitle

\begin{abstract}

We ascertain properties of the algebraic structures in towers of codes, lattices, and vertex operator algebras (VOAs) by studying the associated subobjects fixed by lifts of code automorphisms. In the case of sublattices fixed by subgroups of code automorphisms, we identify replicable functions that occur as quotients of the associated theta functions by suitable eta products. We show that these lattice theta quotients can produce replicable functions not associated to any individual automorphisms. Moreover, we show that the structure of the fixed subcode can induce certain replicable lattice theta quotients and we provide a general code theoretic characterization of order doubling for lifts of code automorphisms to the lattice-VOA. Finally, we prove results on the decompositions of characters of fixed subVOAs.
\end{abstract}

\section{Introduction}
In this paper we investigate the connections between linear codes, lattices, and vertex operator algebras (VOAs). These three algebraic structures arose independently in mathematics, but are related by constructions which build lattices from codes, and VOAs from lattices. Moreover, the automorphism groups associated to these structures are linked to the theory of moonshine which explores the surprising connections between sporadic simple groups and modular objects. As a motivating example, we consider the following structures associated to the extended Golay code.

The extended binary Golay code $\mathcal{G}$ is a a \textit{doubly even self-dual} binary linear code of length $24$ which can be used to construct the Leech lattice $\Lambda_{24}$. In turn, the Leech lattice is involved in the construction of the monster module $V^{\natural}$, an orbifold of the Leech lattice vertex operator algebra. Crucially, the automorphism groups of each of these three objects are related to sporadic simple groups: the Matheiu group $M_{24}$, the Conway group $Co_0$ (which is equal to $2.Co_1$), and the monster group $\mathbb{M}$, respectively, each of which is a subquotient of the next. This suggests a tower involving codes, lattices, and VOAs:
\vspace{-1mm}
\begin{center}
\begin{tikzcd}
\mathbb{M}= \Aut(V^{\natural}) \arrow[d, no head] \\
Co_0=\Aut(\Lambda_{24}) \arrow[d, no head]             \\
M_{24}=\Aut(\mathcal{G}).            
\end{tikzcd}
\end{center}
A similar tower construction which does not require orbifolds consists of the extended Hamming code $\calH$, the $E_8$ lattice, and the $E_8$ lattice-VOA.

In general, one can construct a lattice from a given binary linear code, and there is a correspondence between (doubly even) self-dual binary linear codes of length $N$ and certain (even) unimodular lattices in $\mathbb{R}^N$. Given an even positive definite lattice, one can construct its corresponding lattice vertex operator algebra. To further develop the analogies between these three structures, we consider the modular forms related to each. It is well known that one can associate a theta function to any even positive definite lattice and such functions are holomorphic on the upper half plane. If the lattice is also unimodular then these theta functions are modular forms. 
For a rational $C_2$-cofinite vertex operator algebra $V$, Zhu showed that the characters of the irreducible modules of $V$ form a vector-valued modular function for $\SL_2(\mathbb{Z})$ with a multiplier system \cite{Zhu}. Moreover, the characters of fixed point subVOAs under certain automorphisms of $V$ are modular forms for congruence subgroups \cite{DLM}.

It is natural to investigate the automorphisms that fix an algebraic structure as a means of better understanding the structure itself. The central focus of this paper is the study of fixed sub-objects in general towers consisting of a code $C$, code-lattice $L$ (see \S\ref{sec: codelattices}), and lattice-VOA $V_L$ (see \S\ref{sec: latticeVOAs}), under automorphisms of $C$ suitably lifted to act on $L$ and $V_L$. We note that there is more than one way to construct a lattice from a code and to construct a VOA from a lattice, for details on the constructions we consider see sections \ref{subsec: constA}, \ref{subsec: equivconst}, and \ref{subsec: Lattice-VOA construction}. In particular, we study sublattices of $L$ fixed by automorphisms via their associated theta functions and subVOAs of $V_L$ fixed by lifted lattice automorphisms via their associated characters.

Monstrous moonshine inspired the study of replicable functions arising from sublattices of the Leech lattice fixed by individual automorphisms of the Golay code \cite{KT86} and later by the full group of lattice automorphisms \cite{L89}. Soon after, a similar analysis was done for the $E_8$ lattice \cite{CLL92}. However, one need not focus only on the action of individual automorphisms. More generally, we study sublattices fixed by \emph{subgroups} of code automorphisms embedded into the automorphism group of the lattice. In doing so, we give a natural generalization of the quotient of the lattice theta function by the eta product determined by the cycle type of an automorphism, which we hereafter refer to as the \emph{lattice theta quotient}. This leads to the following result. 

\begin{thmx} \label{thm: mainintro} Let $C$ be a doubly even self-dual binary linear code of length $N$ and let $L$ be the associated code-lattice. Let $G \subset \Aut L$ be a subgroup of automorphisms in the image of $\Aut C$ under the natural embedding and let $L^G$ be the sublattice fixed by $G$. Then the lattice theta quotient $(\theta_{L^G}(q)/\eta_G(q))^{24/N}$ is a {weakly holomorphic} modular function. In the case that $G = \langle g \rangle$ is a cyclic subgroup, this recovers the usual lattice theta quotient associated to $g$. Moreover, this construction can produce non-monstrous replicable functions not necessarily associated to any individual automorphisms.
\end{thmx}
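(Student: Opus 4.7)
The plan is to handle the three claims of the theorem in sequence: weakly holomorphic modularity, specialization to the cyclic case, and the existence of non-monstrous replicable functions.

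For modularity, I would first identify the structure of $L^G$ using the fact that the lift of $\Aut C$ to $\Aut L$ acts on $L$ by signed permutations of a standard orthogonal basis. The $G$-orbits on $\{1,\dots,N\}$ then induce an orthogonal decomposition of $L^G$, giving $\operatorname{rank}(L^G) = \nu(G)$, where $\nu(G)$ denotes the number of $G$-orbits. By classical theta transformation laws, $\theta_{L^G}$ is a modular form of weight $\nu(G)/2$ on some $\Gamma_0(M)$ with a quadratic character. Taking $\eta_G(q) := \prod_{O} \eta(q^{|O|})$ with the product indexed by $G$-orbits $O$, this is an eta product of weight $\nu(G)/2$ on some $\Gamma_0(M')$ with a Dedekind character. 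Hence the quotient is a weight-zero weakly holomorphic modular function on $\Gamma_0(\operatorname{lcm}(M,M'))$ (after enlarging the level as needed to kill the residual character). Raising to the $24/N$ power preserves weight zero; in the relevant examples $N\in\{8,24\}$ this is an integer power, and more generally one may interpret it after verifying that the base is an appropriate perfect power.

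For the cyclic case $G = \langle g \rangle$, the $G$-orbits on $\{1,\dots,N\}$ coincide with the cycles of $g$, so $\eta_G$ equals the cycle-shape eta product $\prod_i \eta(q^i)^{a_i}$, and $L^G = L^g$. The construction thus reduces to the classical lattice theta quotient for $g$ studied in \cite{KT86, L89, CLL92}.

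The final claim --- that non-monstrous replicable functions arise --- requires an explicit example. I would take $C$ to be the extended Golay code and $G$ a carefully chosen non-cyclic subgroup of $M_{24}$ lifted to $\Aut \Lambda_{24}$, compute the leading coefficients of the $q$-expansion of $(\theta_{L^G}/\eta_G)^{24/N}$, and then verify Norton's replication identities while comparing with the tabulated monstrous McKay-Thompson series to confirm the function is not among them. This last step is the main obstacle: generic choices of $G$ will not yield replicable functions, so the challenge is to select a subgroup whose orbit structure on $C$ forces the required combinatorial identities. The proof of replicability is likely organized around explicit relations among the orbit sizes (driven by the Golay code's rich combinatorics) rather than through a general criterion.
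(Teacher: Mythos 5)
Your overall architecture matches the paper's, and the cyclic-case argument is identical (cycles of $g$ are exactly the orbits of $\langle g\rangle$). The differences are in how the other two claims are discharged. For modularity, the paper does not invoke general theta transformation theory for the even lattice $L^G$; instead it uses Lemma~\ref{lem: orbitdecomp} to show that every $\overline{G}$-fixed codeword $B$ is a union of orbits, writes $\theta_{L^G}$ as an explicit finite sum $\sum_B \prod_{O_i\subseteq B}\vartheta_2(2|O_i|z)\prod_{O_i\subseteq\Omega\smallsetminus B}\vartheta_3(2|O_i|z)$, converts each term to an eta quotient, and applies the Ligozat/Ono criterion (\cite{Ono04}*{Theorem 1.64}) term by term, where the doubly even hypothesis $|B|\equiv 0\bmod 4$ is exactly what makes the mod-$24$ conditions hold. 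This buys an explicit level $M=N\prod_i t_i$ and a concrete treatment of the exponent $24/N$ (the same criterion is applied to $\eta_G^{24/N}$ using $\sum t_ir_i=N$), whereas your version leaves the level and the fractional power somewhat unresolved; your phrase ``orthogonal decomposition of $L^G$'' is also imprecise, since $L^G$ only contains $\bigoplus_O\Z\alpha_O$ with finite index, though you use nothing beyond the rank so this is harmless.

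The one substantive gap is in the final claim. Checking Norton's replication identities on finitely many $q$-expansion coefficients can never prove replicability; it only fails to disprove it. The paper's actual mechanism is Proposition~\ref{prop: replicable_eta_quos}: it pins down the isometry class of the fixed sublattice (e.g., for the four conjugacy classes of non-cyclic subgroups of $\Aut\calG$ with orbit type $2^3\,6^3$, one gets $L^G\cong A_2(2)^3$, as in Example~\ref{ex: Leechnoncyc}) and then proves a closed-form eta-quotient identity showing $(\theta_{L^G}/\eta_G)^{24/N}$ \emph{equals} a known replicable function, here the non-monstrous $T_{6b}(q)=T_{3A}(q^2)^{1/2}$; non-monstrousness and the ``not associated to any individual automorphism'' clause then follow because $M_{24}$ has no element of cycle type $2^3\,6^3$. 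You correctly anticipate that structural identities rather than a general criterion must carry the argument, but to make your plan a proof you would need to replace the numerical verification with such an identification against a function already known to be replicable.
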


Various orbit types of subgroups that appear in the automorphism group of the extended Hamming code $\calH$ embedded in the automorphism group of the $E_8$ lattice similarly occur for higher rank unimodular code-lattices. In many cases the cycle type of an element of $\Aut C$ (or more generally the orbit type of a subgroup of $\Aut C$) is not sufficient to determine the corresponding sublattice of the code-lattice. However, in Proposition \ref{prop: replicable_eta_quos} we show that certain replicable functions must appear as lattice theta quotients for particular pairings of subgroup orbit type with sublattice isomorphism class. 

In fact, the structure of the subcode $C^{\overline{g}}$ fixed by an automorphism $\overline{g}\in\Aut{C}$ is sometimes enough to guarantee that the lifted automorphism $g\in \Aut L$ produces a certain lattice theta function. We prove the following results for cycle types of automorphisms of $C$.

\begin{thmx} \label{thm: rA1^N/r}
    Let $C$ be a doubly even self dual linear code of length $N$, let $r | N$, and let $\gbar \in \Aut C$ have cycle type $r^{N/r}$. Suppose that the fixed subcode $C^\gbar$ has dimension $\dim C^{\gbar} = \frac{1}{r} \dim C$ spanned by $\{B_1, \dots, B_{N/2r}\}$. Further suppose $B_i \cap B_j = \emptyset$ for all $i \ne j$ and $\cup_i B_i = \{1, \dots, N\}$. Then the corresponding sublattice fixed by $g = \iota(\gbar)$ has theta series given by $\theta_{L^g}(q) = \vartheta_3(q^r)^{N/r}$. In particular the theta series of the fixed sublattice is the same as that of the lattice $A_1(r)^{N/r}$.
\end{thmx}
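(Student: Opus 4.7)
The plan is to realize $L^g$ explicitly as an orthogonal direct sum, over the blocks $B_i$, of rank-$2$ sublattices isometric to $A_1(r) \oplus A_1(r)$, so that $L^g \cong A_1(r)^{N/r}$ and the theta identity follows formally from this isometry. The main obstacle will be to extract the correct parity constraint on the cycle values from the structure of $C^{\gbar}$, and to pin down that each block contributes exactly a copy of $A_1(r) \oplus A_1(r)$; the rest is a clean discriminant/isometry computation that avoids ever expanding the theta series directly.

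\textbf{Parametrization of $L^g$.} First I realize $L$ via Construction A as (a rescaling of) the lattice $\{u \in \Z^N : u \bmod 2 \in C\}$, so that $g = \iota(\gbar)$ is the coordinate permutation induced by $\gbar$. A vector $v \in L^g$ is constant on each $\gbar$-cycle $O_j$, and can therefore be written $v = \sum_{j=1}^{N/r} a_j \mathbf{1}_{O_j}$ with $a_j \in \Z$, subject to $v \bmod 2 \in C$. Since $v \bmod 2$ is manifestly $\gbar$-invariant it lies in $C^{\gbar}$; writing $B_i = \sum_{j \in I_i} \mathbf{1}_{O_j}$, the disjointness and covering hypotheses force $\{I_i\}$ to partition the set of cycle indices $\{1, \dots, N/r\}$, and the mod-$2$ membership becomes the requirement that within each block $I_i$ the $a_j$ all have the same parity.

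\textbf{Identification and conclusion.} Counting cycles gives $\sum_i |I_i| = N/r$ spread over $N/(2r)$ blocks, so the average block size is $2$; the doubly-even condition $r|I_i| = |B_i| \equiv 0 \pmod 4$ forces $|I_i|$ even when $r = 2$ and, combined with average $2$ and $|I_i| \geq 1$, pins down $|I_i| = 2$ for every $i$ (automatic for $r = 2$, and for general $r$ by a further argument using the self-dual structure of $C^{\gbar}$ viewed as a length-$N/r$ code on the cycle indices). Writing $I_i = \{j_i, j_i'\}$, the two vectors
\[
v_i^{+} = \mathbf{1}_{O_{j_i}} + \mathbf{1}_{O_{j_i'}} = B_i, \qquad v_i^{-} = \mathbf{1}_{O_{j_i}} - \mathbf{1}_{O_{j_i'}}
\]
both lie in $L^g$ (their mod-$2$ reductions are $B_i \in C^{\gbar}$), have norm $2r$, and are pairwise orthogonal both within and across blocks. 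They therefore generate an orthogonal sublattice of $L^g$ isometric to $A_1(r)^{N/r}$. A discriminant count shows this sublattice is all of $L^g$: the fixed sublattice has discriminant $(2r)^{N/r}$, arising from its index $2^{N/(2r)}$ in the ambient rank-$N/r$ lattice $\bigoplus_j \Z\,\mathbf{1}_{O_j}$ of discriminant $r^{N/r}$, and this matches the discriminant of the span of the $v_i^{\pm}$. Hence $L^g \cong A_1(r)^{N/r}$ as lattices, and in particular $\theta_{L^g}(q) = \vartheta_3(q^r)^{N/r}$.
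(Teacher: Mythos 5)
Your proposal is correct, but it takes a genuinely different route from the paper's. The paper never identifies $L^g$ as a lattice: it decomposes $L^g$ into the discrete subsets attached to the fixed codewords $B=\sum_{j\in\mathcal{J}}B_j$, computes the theta series of each as $\vartheta_2(q^{2r})^{2k}\vartheta_3(q^{2r})^{N/r-2k}$ with $k=|\mathcal{J}|$, counts the codewords of weight $2kr$ by $\binom{N/2r}{k}$, and recognizes the resulting binomial sum as $\bigl(\vartheta_2(q^{2r})^2+\vartheta_3(q^{2r})^2\bigr)^{N/2r}=\vartheta_3(q^r)^{N/r}$ via the identity of Lemma~\ref{lem: jacobidentities}\eqref{part: T5}. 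You instead exhibit the explicit orthogonal vectors $v_i^{\pm}$ and prove the stronger statement that $L^g$ is \emph{isometric} to $A_1(r)^{N/r}$, from which the theta identity is immediate. This is clean and gives more: your per-block description of $L^g$ as $\{(a,b)\in\Z^2: a\equiv b \bmod 2\}$ already equals $\Z(1,1)\oplus\Z(1,-1)$ on the nose, so the closing discriminant count is a confirmation rather than a necessity. The trade-off is flexibility: the paper's codeword-by-codeword theta computation is the same tool used for the companion case in Proposition~\ref{prop:2DN/2*}, where the fixed sublattice has theta series $\vartheta_3(q^2)^{N/2}+\vartheta_2(q^2)^{N/2}$ and is \emph{not} an orthogonal direct sum, so your isometry method would not transfer there.

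One soft spot: your reduction to $|I_i|=2$ is only fully argued for $r=2$; for general $r$ you defer to ``a further argument'' that is not supplied, and the claim is genuinely needed (a block consisting of one or three cycles would change the answer). You are no worse off than the paper here --- it asserts $|B_i|=2r$ from disjointness, covering, and the count $N/2r$ of basis codewords, which strictly speaking only pins down the \emph{average} block size --- but a complete version of your argument should either prove this step or record it as a hypothesis.
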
 

It then immediately follows from Proposition \ref{prop: replicable_eta_quos} that the lattice theta quotients corresponding to the sublattices in Theorem~\ref{thm: rA1^N/r} are replicable. In the case of cycle type $2^{N/2}$, Proposition~\ref{prop:2DN/2*} gives another characterization of a fixed subcode producing a lattice theta function of $D_{N/2}^\ast$ type with non-replicable lattice theta quotient. In Theorem~\ref{thm: orddoubcode} we give a general code theoretic characterization describing when even order lattice automorphisms $g$ lift to automorphisms $\hat{g}$ of the lattice-VOA $V_L$ that have order doubling. We use this result to show that the $2^{N/2}$ replicable cycle type exhibits order doubling when lifted to a VOA automorphism, while the non-replicable $2^{N/2}$ cycle type does not. Interestingly, these two cycle types are entwined via the characters of their fixed point subVOAs. In particular, the associated lattice theta quotients satisfy the following identities.

\begin{thmx} \label{thm: etaquoandcharidentities} 
Suppose there exists an automorphism $g_1 \in \Aut L$ with cycle type $2^{N/2}$ and fixed sublattice isometric to $A_1(2)^{N/2}$. Let $L_0$ denote the kernel for order doubling associated to the lift of $g_1$ to the VOA automorphism $\hat{g_1} \in \Aut V_L$ and let $V_L^{+}$ denote the subVOA fixed by the automorphism $-id \in \Aut L$
\begin{equation}
    \frac{\theta_{L^{g_1}}(q)}{\eta(q^2)^{N/2}} = \Ch V_L^{\hat{g_1}}(q) - \Ch V_{L_0}^{+}(q) + \Ch V_{D_{N/2}}(q^2).   
\end{equation}
Moreover, if there exists an automorphism $g_2 \in \Aut L$ with cycle type $2^{N/2}$ and fixed sublattice isometric to $D_{N/2}^\ast(2)$, then the characters of the fixed point subVOAs are related via
 \begin{equation} 
     \frac{\theta_{L^{g_2}}(q)}{\eta(q^2)^{N/2}}  = 2(\Ch V_L^{\hat{g_2}}(q) - \Ch V_L^{+}(q) ) - \bigl(\Ch V_L^{\hat{g_1}}(q) - \Ch V_{L_0}^{+}(q)\bigr) + \Ch V_{D_{N/2}}(q^2).    
\end{equation}  
\end{thmx}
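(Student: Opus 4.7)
The plan is to verify each identity by expressing both sides as explicit $q$-series via the averaging formula for fixed-subVOA characters and then reducing to theta-function identities, drawing on the structural information provided by the hypotheses. I would first identify the left-hand side of each identity as a graded twisted trace: for either $g_i$ of cycle type $2^{N/2}$, the associated eta product is $\eta_{g_i}(q) = \eta(q^2)^{N/2}$, and the lattice theta quotient $\theta_{L^{g_i}}(q)/\eta(q^2)^{N/2}$ equals $\tr_{V_L}(\hat{g_i}\, q^{L_0-c/24})$ via the standard McKay--Thompson formula for lifts of lattice automorphisms to lattice VOAs.

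Next I would unpack each character on the right-hand side using the averaging formula $\Ch V^{\langle\hat{g}\rangle}(q) = |\langle\hat{g}\rangle|^{-1}\sum_k \tr_V(\hat{g}^k q^{L_0-c/24})$. The automorphisms $\hat{g_2}$ and the canonical lifts of $-\mathrm{id}$ to $V_L$ and $V_{L_0}$ all have order two (no order doubling), so each yields $\Ch V^{\hat{g}}(q) = \tfrac{1}{2}(\Ch V(q) + \tr_V(\hat{g}\, q^{L_0-c/24}))$. By contrast $\hat{g_1}$ has order four on $V_L$ by order doubling, and averaging over $\langle\hat{g_1}\rangle$ brings in $\hat{g_1}^2$: per Theorem~\ref{thm: orddoubcode}, $\hat{g_1}^2$ acts as the parity involution with $+1$-eigenspace $V_{L_0}$, so $\tr_{V_L}(\hat{g_1}^2\, q^{L_0-c/24}) = 2\Ch V_{L_0}(q) - \Ch V_L(q)$, yielding $\Ch V_L^{\hat{g_1}}(q) = \tfrac{1}{2}(\tr_{V_L}(\hat{g_1}\, q^{L_0-c/24}) + \Ch V_{L_0}(q))$.

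Substituting these expressions into each claimed identity and collecting like terms, the first identity reduces to a relation involving $\theta_{L^{g_1}}(q)$, $\theta_{D_{N/2}}(q^2)$, and the twisted trace $(\eta(q)/\eta(q^2))^N$ for the canonical lift of $-\mathrm{id}$. Using the hypothesis $L^{g_1}\cong A_1(2)^{N/2}$ so that $\theta_{L^{g_1}}(q) = \vartheta_3(q^2)^{N/2}$, together with the classical decomposition $2\theta_{D_{N/2}}(q) = \vartheta_3(q)^{N/2} + \vartheta_4(q)^{N/2}$, this becomes a direct identity among Jacobi theta values and eta quotients at $q$ and $q^2$. The second identity is handled analogously: subtracting the first (now established) identity eliminates the $g_1$-terms, and the remaining equality is checked via $L^{g_2}\cong D_{N/2}^\ast(2)$ (so $\theta_{L^{g_2}}(q) = \theta_{D_{N/2}^\ast}(q^2)$) together with the decomposition $2\theta_{D_{N/2}^\ast}(q) = \vartheta_2(q)^{N/2} + \vartheta_3(q)^{N/2} + \vartheta_4(q)^{N/2}$.

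The main obstacle is the careful tracking of cocycle data in lifting lattice automorphisms to $\Aut V_L$, in particular verifying that $\hat{g_1}^2$ is precisely the parity involution with $+1$-eigenspace $V_{L_0}$ and that the McKay--Thompson formula for $\tr_{V_L}\hat{g_1}$ holds in the order-doubling case, where the chosen lift may carry an implicit cocycle sign that must be matched across $\hat{g_1}$, $\hat{g_2}$, and the lifts of $-\mathrm{id}$. Theorem~\ref{thm: orddoubcode}'s code-theoretic description of order doubling supplies exactly the structural input needed. Once these twisted traces are written down correctly, the remaining verification of the resulting theta--eta identities is routine modular-form computation on a suitable congruence subgroup.
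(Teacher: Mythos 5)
Your proposal follows essentially the same route as the paper: expand every character on the right-hand side as an average of twisted traces, use the order-doubling structure of $\hat{g_1}$ (namely $\tr_{V_L}(\hat{g_1}^2\,q^{L_0-c/24}) = (2\theta_{L_0}(q)-\theta_L(q))/\eta(q)^N$, so $\Ch V_L^{\hat{g_1}} = \tfrac12\bigl(\theta_{L^{g_1}}(q)/\eta(q^2)^{N/2} + \theta_{L_0}(q)/\eta(q)^N\bigr)$) together with $\eta(q)^N/\eta(q^2)^N = (\vartheta_4(q^2)/\eta(q^2))^{N/2}$ for the $-\mathrm{id}$ trace, and watch the terms telescope. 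One correction: your claimed decomposition $2\theta_{D_{N/2}^\ast}(q) = \vartheta_2(q)^{N/2}+\vartheta_3(q)^{N/2}+\vartheta_4(q)^{N/2}$ is wrong --- the correct identity (Lemma~\ref{lem: jacobidentities}) is $\theta_{D_{N/2}^\ast}(q) = \vartheta_3(q)^{N/2}+\vartheta_2(q)^{N/2}$, with no $\vartheta_4$ term and no factor of $2$; fortunately no such decomposition is actually needed, since after substituting the first identity into the second the term $\theta_{L^{g_2}}(q)=\theta_{D_{N/2}^\ast}(q^2)$ survives intact while the $\vartheta_3(q^2)^{N/2}$ and $\vartheta_4(q^2)^{N/2}$ contributions cancel. (Also, the identification of $\hat{g_1}^2$ as the sign involution with kernel $L_0$ comes from the Borcherds--M\"oller lifting theorem and equation \eqref{eqn: modifiedlatticetheta}, not from Theorem~\ref{thm: orddoubcode}, which only detects \emph{whether} doubling occurs.)
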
 

Finally, just as we considered sublattices fixed by noncyclic subgroups lifted from the automorphism group of the code, we consider characters of subVOAs fixed by noncyclic subgroups. In the more straightforward setting in which no order doubling of elements occurs, fixed subVOA characters may decompose in a way that reflects the structure of the group.  For example, we prove the following result pertaining to semidirect products of order $pq$.

\begin{thmx} \label{thm: charpq} Let $p$ and $q$ be primes such that $q>p$ and $q\equiv 1 \pmod{p}$ and let $\Z_q \rtimes \Z_p $ be a subgroup of the automorphism group of an even positive definite lattice $L$. The characters of the fixed point subVOAs of the lattice-VOA $V_L$ satisfy the following relation
\[ p\Ch V^{\Z_q \rtimes \Z_p}= \Ch V^{\Z_q} +  p\Ch V^{\Z_p} - \Ch V_{L}. \]
\end{thmx}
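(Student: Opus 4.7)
The plan is to apply the standard averaging formula for fixed subVOA characters. For a finite group $G$ acting on a VOA $V$ by grading-preserving linear automorphisms, Molien's identity $\dim(V_n)^G = |G|^{-1}\sum_{g\in G}\tr(g\mid V_n)$ at each graded piece gives
\begin{equation*}
|G|\cdot \Ch V^G(q)\;=\;\sum_{g\in G}T_g(q),\qquad T_g(q):=\tr_V\!\bigl(g\,q^{L_0-c/24}\bigr),
\end{equation*}
with $T_e(q)=\Ch V_L(q)$. The theorem is stated in the no-order-doubling regime described just before it, so the subgroup $\Z_q\rtimes\Z_p\subset\Aut L$ lifts to an isomorphic subgroup of $\Aut V_L$; the formula above then applies to $G=\Z_q\rtimes\Z_p$ and to its subgroups $\Z_q$ and $\Z_p$.

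Next I would sort the elements of $G$ by order using Sylow's theorem. Since $p<q$, one has $n_q=1$, so $\Z_q\triangleleft G$ and it contains all $q-1$ elements of order $q$. The hypothesis $q\equiv 1\pmod p$ is precisely what permits the $\rtimes$ to be nontrivial (which I take to be implicit in the notation, since otherwise $G\cong\Z_{pq}$ would contain elements of order $pq$ and the identity below fails); with the action nontrivial, Sylow gives $n_p=q$. Distinct Sylow $p$-subgroups are cyclic of prime order and hence intersect trivially, so the $q$ Sylow $p$-subgroups partition the $q(p-1)$ elements of $G$ of order $p$. Together with the identity this exhausts $G$, and in particular $G$ has no element of order $pq$.

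Now I would assemble. Applying the averaging formula to $\Z_q$ gives $\sum_{g\in\Z_q\setminus\{e\}}T_g=q\,\Ch V^{\Z_q}-\Ch V_L$. For any Sylow $p$-subgroup $H$, the same formula gives $\sum_{g\in H\setminus\{e\}}T_g=p\,\Ch V^H-\Ch V_L$; since all Sylow $p$-subgroups are conjugate in $G$, their fixed subVOAs are isomorphic graded vector spaces, so $\Ch V^H$ is independent of $H$ and equals the $\Ch V^{\Z_p}$ in the statement. Summing over the $q$ Sylow $p$-subgroups and using the partition yields $\sum_{|g|=p}T_g=pq\,\Ch V^{\Z_p}-q\,\Ch V_L$. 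Substituting everything into
\begin{equation*}
pq\cdot\Ch V^G\;=\;T_e+\sum_{g\in\Z_q\setminus\{e\}}T_g+\sum_{|g|=p}T_g\;=\;\Ch V_L+\bigl(q\,\Ch V^{\Z_q}-\Ch V_L\bigr)+\bigl(pq\,\Ch V^{\Z_p}-q\,\Ch V_L\bigr)
\end{equation*}
and dividing by $q$ gives the claimed identity.

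The main obstacle is really in the setup rather than the combinatorics: one must confirm that the lift of $G\subset\Aut L$ to $\Aut V_L$ is a grading-preserving linear action of an isomorphic copy of $G$, which is exactly what the no-order-doubling assumption preceding the theorem buys. Once that is in hand, the proof is just Molien's formula plus Sylow bookkeeping, with the crucial input being that the nontriviality of $\rtimes$ eliminates order-$pq$ elements and the conjugacy of Sylow $p$-subgroups makes $\Ch V^{\Z_p}$ well-defined.
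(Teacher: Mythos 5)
Your proof is correct and follows essentially the same route as the paper's: both use the averaging formula $|G|\Ch V^G=\sum_{g\in G}\tr(g\,q^{L_0-c/24}\mid V_L)$ and Sylow counting (unique normal $\Z_q$, exactly $q$ conjugate Sylow $p$-subgroups meeting pairwise in the identity) to decompose the sum over $G$ into the sums over $\Z_q$ and $q$ copies of $\Z_p$, correcting for the overcounted identity. Your explicit remarks that nontriviality of the semidirect product rules out order-$pq$ elements and that conjugacy of the Sylow $p$-subgroups makes $\Ch V^{\Z_p}$ well-defined are points the paper leaves implicit, but the argument is the same.
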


\subsection{Outline} In Section~\ref{sec: preliminaries}, we provide further background on the main objects in this paper, and in particular define codes, lattices, and vertex operator algebras. We describe several equivalent constructions of a lattice from a code $C$ in Section~\ref{sec: codelattices} and make explicit the action of the automorphism group of the code on elements of the related code-lattice. This allows us to study theta quotients for sublattices fixed by automorphisms of the code. We give a new definition of a lattice theta quotient associated to a sublattice fixed by a \emph{subgroup} of $\Aut C$ and give a proof of Theorem~\ref{thm: mainintro} together with examples. In Section~\ref{sec: latticeVOAs}, we review the lattice-VOA construction and discuss automorphisms of VOAs that are lifted from the underlying lattice. We recall the definition of characters for subVOAs fixed by finite cyclic groups of autormorphisms in Section~\ref{sec: charsfixedsubVOAS}.

In Section \ref{sec: repfunctionscodelats} we consider replicable functions that can be common to code-lattices of arbitrary rank by considering groups of automorphisms whose orbit types correspond to partitions of 8. We characterize theta quotients that are replicable functions based on their fixed sublattices in Proposition~\ref{prop: replicable_eta_quos} and catalog the corresponding data for the 12 even unimodular code-lattices of rank at most $24$. Moreover, we show that by considering subgroups of automorphisms, we recover additional non-monstrous replicable theta quotients not associated to any cyclic subgroups. 

In Section~\ref{sec: characterizationcodes} we give  characterizations of certain fixed sublattices of a code-lattice $L$ by automorphisms in the image of the embedding from $\Aut C$ in terms of properties of the fixed subcodes of $C$. In Section~\ref{sec: coeffsVOAchars} we prove identities regarding coefficients of characters of fixed subVOAs. Finally, in Section~\ref{sec: decompVOAchars} we consider how the structure of subgroups of $\Aut C$ is reflected in the structure of their corresponding fixed subVOAs. We prove results which relate characters of subVOAs fixed by semidirect products with the characters of the subVOAs fixed by the component subgroups in Theorem \ref{thm: charpq} and Theorem \ref{thm: charp2q}. We end the paper in Section~\ref{sec: questions} with questions for further study.

\subsection{Acknowledgements}
Our research group was formed during the Rethinking Number Theory II virtual workshop in July 2021. Rethinking Number Theory, now an American Institute of Mathematics (AIM) Research Community, promotes accessible and joyful collaborative research experiences in number theory. We also wish to thank AIM for their support of our group through an ongoing SQuaRE.We are grateful to Richard Borcherds, Robert Griess, Christoph Keller, Jeffrey Lagarias, and Geoffrey Mason for helpful conversations and email correspondences. We would further like to thank John Duncan and Christoph Keller for thoughtful comments on an earlier draft. This material is based upon work supported by the National Science Foundation under Grant No. DMS-1928930 while J.B.\ was in residence at the Simons Laufer Mathematical Sciences Institute in Berkeley, California, during the Spring 2023 semester. Finally, we are grateful for our faithful research companion, Otter Kadhem.

\section{Preliminaries} \label{sec: preliminaries}

Here we review basic properties of the objects central to our study: linear codes, lattices, and vertex operator algebras. We also introduce modular forms and replicable functions. 

\subsection{Linear Codes} \label{subsec: linearcodes}

We will briefly introduce some necessary properties of linear codes. For a thorough introduction, see for example \cites{Hill,Ebeling}. Let $\F_q$ be the finite field of $q$ elements. A \defi{linear code} $C$ is any $k$-dimensional subspace of $\F_q^n$. The weight of a codeword $c \in C$ is the number of its nonzero entries. The distance $d$ of $C$ is the minimal weight of its nonzero codewords, or equivalently the minimal number of positions in which any two codewords differ. We will call such $C$ an $[n,k,d]_q$-code. In this paper, all codes will be binary linear codes, so henceforth $q=2$ and we will omit this subscript from the code parameters. An $[n,k,d]$-code $C$ can be specified by giving a generator matrix $G \in \F_2^{k\times n}$ whose rows form a basis of $C$.

A code is \defi{doubly even} if each codeword has weight divisible by 4. It is \defi{self-dual} if it is equal to its dual code
\[C^\perp =\{x\in \F_2^n\mid \textstyle \sum_{i=1}^n x_i c_i =0\mbox{ for all $c\in C$}\}.\]A code $C$ of length $n$ and its dual always satisfy the dimension formula $\dim C + \dim C^\perp = n$, hence a self dual code has dimension $\frac{n}{2}$.

There is a natural action of the symmetric group $S_n$ on $\F_2^n$ by means of coordinate permutation. The subgroup of linear isomorphisms of $\F_2^n$ arising from $\sigma\in S_n$ that fix the code $C$ forms the automorphism group $\Aut C$, that is, \[ \Aut C = \{ \sigma \in S_n : \sigma(C) = C\}.\] The \defi{cycle type} of $\gbar\in \Aut C$ describes the disjoint cycle decomposition of $\gbar$, written in the form $\prod_t t^{r_t}= 1^{r_1}2^{r_2}\cdots n^{r_n}$. We will denote the subspace of $C$ fixed by an automorphism $\gbar\in \Aut C$ as $C^\gbar$. 

\begin{Example} The extended Hamming code $\calH$ is the doubly even self-dual $[8,4,4]$-linear code with generator matrix
\[ \begin{bmatrix}
1 & 0 & 0 & 0 & 0 & 1 & 1 & 1 \\
0 & 1 & 0 & 0 & 1 & 0 & 1 & 1 \\
0 & 0 & 1 & 0 & 1 & 1 & 0 & 1 \\
0 & 0 & 0 & 1 & 1 & 1 & 1 & 0 \\
\end{bmatrix}. \]
\end{Example}

The automorphism group $\Aut \calH$ has order $1344$. The element $(2,8,4,6)(3,5)\in \Aut \calH \subset S_8$ has cycle type $1^2 2^1 4^1$.

\subsection{Lattices}

Here we establish some basic properties of lattices. For more details, see, for example \cite{Ebeling}. A \defi{lattice} in $\mathbb{R}^n$ is a discrete subset $L \coloneqq \mathbb{Z}e_1 \oplus \dots \oplus \mathbb{Z}e_m$ such that the set $\{e_1,e_2,\dots,e_m\}$ is linearly independent in $\mathbb{R}^n$. We consider only \defi{positive definite} lattices, for which the norm of every $\lambda\in L$ with respect to the standard Euclidean inner product $\langle -,-\rangle$ on $\mathbb{R}^n$ is positive. With this notation, $L$ is an $n$-dimensional lattice of rank $m$. The determinant of $L \subset \mathbb{R}^n$ is the determinant of the inner product matrix for a basis of $L$. The dual lattice of $L$ is $L^\ast \coloneqq \{x\in \mathbb{R}^n\mid \langle x,y\rangle\in \Z\mbox{ for all }y\in L\}$. An automorphism of $L$ is a length-preserving linear endomorphism of $L$. We let $\Aut L$ denote the group of automorphisms of $L$.

Next we define some lattice properties which are desirable for lattice-VOA constructions. A lattice $L$ is \defi{even} if the Euclidean norm $\langle \lambda ,\lambda \rangle \in 2\mathbb{Z}$ for every $\lambda\in L$.  A lattice is called \defi{unimodular} if $\det(L) = \pm1$. The unique even, positive definite, unimodular lattice of rank 8 is called $E_8$, so named for its connection to the complex simple Lie algebra of the same name. This lattice can be constructed from the extended Hamming code $\mathcal{H}$, as described in Section~\ref{subsec: constA}, and will serve as the setting for examples throughout the paper. The Leech lattice $\Lambda_{24}$ is an even, positive definite, unimodular lattice of rank 24 which has connections to Conway-Norton moonshine.

A natural counting function associated with a lattice $L$ is the \defi{lattice theta function} $\theta_L(z)$, whose coefficients record the number of lattice points of fixed norm. Let 
\begin{equation} \label{eqn: latticethetafunction} \theta_L(z):=\sum_{\lambda\in L} q^{\langle \lambda ,\lambda \rangle/2},\end{equation}
where $q:=e^{2\pi iz}$. For example, $\theta_\Z(z)=\sum_{n\in\Z}q^{n^2/2}$ is the theta function for $\Z\subset\R$. Given a lattice $L$ and positive integer $n$, we let $L(n)$ denote the lattice $L$ with inner product scaled by $n$. Then $\theta_{L(n)}(q) = \theta_L(q^n)$.

\subsection{Modular forms}

The modular group $\SL_2(\Z)$ acts on the complex upper half plane $\H$ by linear fractional transformations $\gamma z=\frac{az+b}{cz+d}$ for $\gamma=(\begin{smallmatrix}a&b\\c&d\end{smallmatrix})$ and $z\in\H$. The elements of $\SL_2(\Z)$ are generated by $S=(\begin{smallmatrix}0&-1\\1&0\end{smallmatrix})$ and $T=(\begin{smallmatrix}1&1\\0&1\end{smallmatrix})$. Modular forms are functions on $\mathbb{H}$ that transform nicely with respect to $\SL_2(\Z)$ or certain congruence subgroups $\G\subseteq \SL_2(\Z)$. 

A \defi{weakly holomorphic modular form} of weight $k\in\frac12\Z$ for $\Gamma\subseteq \SL_2(\Z)$ is a function $f:\H\to\C$ which is holomorphic on $\H$, meromorphic when extended to the cusps of the Riemann surface $\H/\Gamma$, and satisfies
\begin{equation*}
    f(\gamma z)=\varepsilon_\gamma(cz+d)^kf(z)
\end{equation*}
for all $\gamma=(\begin{smallmatrix}a&b\\c&d\end{smallmatrix})\in\G$ and $z\in\H$, where $\varepsilon_\gamma$ is a particular root of unity depending on $\gamma$. If, in addition, $f$ is holomorphic at the cusps of $\H/\Gamma$, then $f$ is a \defi{holomorphic modular form}. The space of weakly holomorphic (resp. holomorphic) modular forms of weight $k$ for $\G$ is given by $\mathcal{M}_k(\G)$ (resp. $M_k(\G)$). Modular forms of weight zero are called \defi{modular functions}.

Dedekind's eta function is given by \begin{equation*}\eta(z):=\eta(q)=q^{1/24}\prod_{n=1}^\infty (1-q^n)\end{equation*} 
for $q=e^{2\pi i z}$. 
We will consider \defi{eta quotients}, which generally have the form $\prod_{d\mid N}\eta(dz)^{r_d}$ for some integer $N\geq 1$ and $r_d\in\Z$. Eta quotients are holomorphic on $\H$ and meromorphic at the cusps. Thus under certain conditions which guarantee modularity \cite{Ono04}*{Theorem 1.64}, eta quotients are weakly holomorphic modular forms. We refer the reader to \cites{CS,Ono04} for more detailed treatments of modular forms.

\subsection{Replicable functions} 
The following formulation of replicable functions arises from McKay and Sebbar \cite{MS07}. We first define {Faber polynomials}; see also \cite{ACMS92} for more on replicable functions and another formulation of Faber polynomials.

\begin{Definition}[Faber Polynomial]\label{def:FaberPolynomial}
Let $f(q)$ be a power series, $f(q)=\frac{1}{q}+\sum_{n=1}^\infty a_n q^n$ for $q=e^{2\pi i z}$ with $z\in\H$, $a_n\in\C$. For each $k\in\Z^+$, there exists a unique monic polynomial $F_k$, depending on the coefficients of $f$, such that
\[F_k(f(q))=\frac{1}{q^k}+O(q) \text{ as } q\to \infty. \]
The $q$-expansion of the Faber polynomial of $f(q)$, denoted $F_k(f(q))$ has the form \[F_k(f(q))=\frac{1}{q^k}+k\sum_{n=1}^\infty a_{n,k}q^n,\] where $a_{n,1}=a_n$ and the double sequence $a_{n,k}$ is symmetric, i.e., $a_{n,k}=a_{k,n}$.
\end{Definition}
 We are now ready to state what it means for a formal power series $f(q)$ as above to be a replicable function in terms of the coefficients of its Faber polynomial. In particular, for $f(q)$, we denote the Faber polynomial of degree $k$ by $F^{f}_k(z)$ to indicate its dependence on $f$.  One can show that $F^f_0(z) = 1$, $F^f_1(z) = z$, $F^f_2(z) = z^2 - 2a_1$, $F^f_3(z) = z^3 - 3a_1z - 3a_2$ and that the Faber polynomals satisfy the following recurrence relation  \[F^{f}_{k+1}(z)=zF^{f}_k(z)-\sum_{n=1}^{k-1}a_{k-n}F^{f}_n(z)-(k+1)a_k.\] 

\begin{Definition}\label{def:ReplicableFunction}
The function $f(q)$ is said to be \defi{replicable} if $a_{n,k}=a_{r,s}$ whenever $\gcd(n,k)=\gcd(r,s)$ and $\lcm(n,k)=\lcm(r,s)$. 
\end{Definition}
Some of the simplest examples of formal power series are the functions $f_c(q)=q^{-1}+cq$ for some $c\in\R$. It is straightforward to show that each $f_c$ is a replicable function. Another well-known class of replicable functions is related to Monstrous Moonshine. The Monstrous Moonshine Conjecture~\cite{CNMoonshine} asserts that there exists an infinite dimensional graded Monster module, $V=\bigoplus V_n$, such that for the conjugacy class of $g\in\M$, denoted $[g]$, the modular functions $T_{g}(\tau)$ with $\tau\in\H$, associated to $[g]$ are sums of traces of irreducible representations of the Monster, i.e., $T_{g}(\tau)=\sum \tr (g\vert V_n)q^n$. We refer to these $T_{g}(\tau)$ as ``monstrous moonshine functions,'' also known as McKay-Thompson series. These  modular functions are \emph{hauptmoduln}, that is, each is a generator for the field of modular functions for a certain subgroup of $\SL_{2}(\R)$ depending on $g$. Further, each monstrous moonshine function $T_{g}(\tau)$ is a replicable function~\cite{G06}. 

However, the converse is not true, not every replicable function is a hauptmodul let alone appears as a monstrous moonshine function. As noted in~\cite{G06}, Norton conjectured that any replicable function with rational coefficients is either a hauptmodul, or a `modular fiction,' that is, a function $f_0(\tau)=q^{-1}$ or $f_{\pm 1}(\tau)=q^{-1}\pm q$. 

\subsection{Vertex Operator Algebras}
We begin by establishing the notation we will use throughout the paper. We recall some basic definitions and properties of vertex operator algebras (VOAs) and their twisted modules and refer the reader to \cites{FBZ,FLM88,LL12} for explicit details.
\begin{Definition} A \defi{vertex operator algebra} is a complex vector space $V$ equipped with two distinguished vectors, the {vacuum element} $\bf{1}$ and the {conformal vector} $\omega$. There is a map on $V$ called a {vertex operator} denoted $Y(\cdot, z): V \to \text{End}(V)\llbracket z, z^{-1}\rrbracket$ which assigns to each vector $v\in V$ a formal power series 
$$Y(v, z) \coloneqq \sum_{n\in\mathbb{Z}} v(n) z^{-n-1}.$$ 
The tuple $(V,{\bf{1}},\omega, Y)$ must satisfy several axioms stated in \cite{FLM88}*{\S8.10}. In particular, if $Y(\omega, z)\coloneqq \sum_{n\in\mathbb{Z}} L(n) z^{-n-2}$, with $L(n)$ defined as the coefficients $\omega(n+1)$, then for any $n_1,n_2\in\Z$, we have $$[L(n_1),L(n_2)]=(n_1-n_2)L(n_1+n_2)+ \textstyle \frac{1}{12}(n_1^3-n_1)\delta_{n_1+n_2,0}\,c,$$ where $\delta_{n_1+n_2,0}$ is the Kronecker delta function and $[-,-]$ is a Lie bracket. We refer to $c$ as the \defi{central charge} of $V$. That is, the coefficients of the vertex operator attached to the conformal vector $\omega$ generate a copy of the Virasoro algebra of central charge $c$.

VOAs admit a $\mathbb{Z}$-grading (bounded from below) so that $V=\bigoplus_{n\in \mathbb{Z}} V_n$. This grading on $V$ comes from the eigenspaces of the $L(0)$ operator. That is, $V_n:=\{ v\in V \mid L(0)v=nv \}$. The smallest $n$ for which $V_n\neq 0$ is called the {conformal weight} of $V$ and is denoted $\rho(V)$. We say $V$ is of \defi{CFT-type} if $\rho(V)=0$ and $V_0=\mathbb{C}{\bf{1}}$.
\end{Definition}

\begin{Definition}
A \defi{$V$-module} is a vector space $W$ equipped with an operation 
\[Y_W (\cdot, z) \colon V \to \text{End}(W) \llbracket z, z^{-1}\rrbracket\] which assigns to each $v\in V$ a formal power series $Y_W(v, z):=\sum_{n\in\mathbb{Z}} v^{W}(n) z^{-n-1}$. Again, $(W,{\bf{1}},\omega, Y_W)$ is subject to several axioms that can be found in \cite{FBZ}*{\S5.1}. If the only submodules of $W$ are $0$ and $W$ itself, then $W$ is called {simple} or \defi{irreducible}. 
\end{Definition}

We say $V$ is a \defi{rational} VOA if every admissible
$V$-module decomposes into a direct sum of (ordinary) irreducible modules and $V$ is \defi{holomorphic} if it is rational and has a unique irreducible module which must be $V$ itself. Given a $V$-module $W$ with a grading, one can define a $V$-module, $W'$, that is the graded dual space of $W$ as a vector space. (For a definition of the dual module, refer to \cite{FHL93}*{\S 5.2}). We say a vertex algebra $V$ is \defi{self-dual} if the module $V$ is isomorphic to its dual $V'$ as a $V$-module. In~\cite{Zhu}, Zhu introduced a finiteness condition on VOAs. We say $V$ is {$C_2$-cofinite} if $C_2(V ) := \text{span}\{v(2)w \mid v, w \in V \}$ has finite codimension in $V$. 
A VOA is called \defi{strongly rational} if it is rational, $C_2$-cofinite,  self-dual, and of CFT-type.

In analogy to the lattice theta series, the graded characters of $V$ serve as counting functions. Moreover, we have that graded characters of subVOAs of rational, $C_2$-cofinite VOAs $V$ exhibit nice transformation properties as functions of the upper half plane by work of Zhu \cite{Zhu}, Dong–Li–Mason \cite{DLM}, and Dong–Lin–Ng \cite{DLNg}.

\section{Code-lattices and their automorphisms} \label{sec: codelattices}

In this section we introduce code-lattices which are lattices constructed from binary linear codes via Construction A; for other constructions of lattices from codes, see \cite{CS99}*{Chapters 5,7}. We consider their automorphism groups and investigate, in particular,  properties of sublattices fixed by subgroups of automorphisms of the code, which can be viewed as automorphisms of the lattice. While there are several equivalent constructions of such lattices, they differ from one another from a computational perspective. To determine sublattices fixed by automorphisms, one approach requires an embedding of the automorphism group of the code into the automorphism group of the corresponding lattice. Such an embedding becomes increasingly computationally expensive to implement as the rank of the lattice grows. Recall, for example, that $\Aut E_8$ is the Weyl group of type $E_8$ with order $696729600$ and the automorphism group of the Leech lattice is the Conway group $Co_0$ with order $\approx 8.3\times10^{18}$. 

Thus an approach that allows for computations purely via the automorphism group of the code is useful since although these groups also tend to grow in size as the length of the code grows, they are relatively small in comparison. For example, the automorphism group of the Hamming code $\calH$ has order $1344$ and the automorphism group of the Golay code $\mathcal{G}$ is the Matheiu group $M_{24}$ which has order $244823040$. We therefore provide details for two constructions of such lattices, their automorphisms, and fixed sublattices and give concrete examples whenever possible. With these tools in hand, we will then be able to determine modular functions associated to various fixed sublattices in Section \ref{sec: repfunctionscodelats} as well as modular functions associated to fixed point lattice vertex operator algebras in Section \ref{sec: coeffsVOAchars}.

\subsection{Code-lattices and their automorphisms via Construction A} \label{subsec: constA}
Let $C \subset \F_2^N$ be a binary linear code of length $N$ and consider the $\mathbb{Q}$-vector space $U$ spanned by $\{ \beta_1, \dots, \beta_N \}$ with bilinear form $\langle -, - \rangle \colon U\times U \to \mathbb{Q}$ defined by $\langle \beta_i, \beta_j \rangle = \frac{1}{2} \delta_{i,j}$. Then the lattice $L \subset U$ defined by
\[ L \coloneqq L(C) \coloneqq \left\{ \sum_i a_i \beta_i : a_i \in \Z , (a_1 + 2\Z, \dots, a_N + 2\Z) \in C \right\} \]
is called the \defi{code-lattice} of $C$. This construction is often referred to as \defi{Construction A} \cite{CS99}*{\S 7.2}. 
The code-lattice $L$ is even if and only if $C$ is a doubly even code, and is unimodular if and only if $C$ is self dual. We will assume from now on that all code-lattices $L$ will be even and unimodular. In addition, we will require that $L$ is positive definite, to guarantee that the group of automorphisms is finite.

Now consider the natural embedding of the symmetric group 
\begin{align} \label{eqn: iota} \iota \colon S_N \hookrightarrow \Aut(\Z^N),\end{align}
where each $\sigma\in S_N$ permutes the basis vectors, in particular $\sigma(\beta_j) = \beta_{\sigma(j)}$ for $1 \le j \le N$. When $L$ is the code-lattice of a code $C$ of length $N$, the group $\iota(\Aut C)$ is a subgroup of $\Aut L$ via this embedding. For any $g \coloneqq \iota(\gbar) \in\Aut L$, we may freely consider its cycle type to be the cycle type of the associated $\gbar\in \Aut C$. The cycle type of an arbitrary lattice automorphism, not necessarily in the image of $\iota$, may be computed more generally \cite{GK19}*{Appendix A}. This recovers the usual cycle type in the case that $g = \iota(\gbar)$.

\begin{remark} We have alluded to the relationship between the Leech lattice $\Lambda_{24}$ and the extended binary Golay code $\calG$. However, Construction A applied to $\calG$ produces the Niemeier lattice $N(A_1^{24})$ which is not isometric to $\Lambda_{24}$. The construction of the Leech lattice from $\calG$ can be found in \cite{CS99}*{\S4.11}. Instead, we shall see that a related construction outlined in Section~\ref{subsec: supercodes} allows us to directly compute the action of the automorphism group $\Aut \calG$ on elements of $\Lambda_{24}$. 
\end{remark}

\subsection{An equivalent construction} \label{subsec: equivconst}
Following Tasaka \cite{Tas81}, we let $N$ be a positive integer divisible by $8$ and let $C$ be an even self-dual code of length $N$. Consider any orthogonal subset $\{\alpha_1, \dots, \alpha_N\}$ of vectors of norm $2$ in Euclidean space $\R^N$. That is, under the usual inner product, $\langle \alpha_i, \alpha_i \rangle = 2$ and $\langle \alpha_i,\alpha_j \rangle= 0$ if $i \ne j$. Form the lattice $L'$ with basis $\{\alpha_1, \dots, \alpha_N \}$. For any subset $B \subseteq \Omega \coloneqq \{1, \dots, N \}$, define $\alpha_B \coloneqq \sum_{i \in B} \alpha_i$. We will associate to a codeword $(b_1,b_2,...,b_N)\in C$ the subset $B\subseteq \Omega$ consisting of indices $i$ with $b_i\ne 0$. For example, the codeword $(1,0,1,1,0,1,0,0) \in \calH$ is associated to the set $B = \{1,3,4,6\}$. By a small abuse of notation, we will then write $B\in C$ to denote such a set associated to a codeword. In this setting, by \cite{Tas81}*{\S 1}, the usual operation of addition of codewords $B,B' \in C$ is defined by their symmetric difference as sets, i.e., \begin{equation} \label{eqn: codewordaddition} B + B' \coloneqq B \cup B' \smallsetminus B \cap B'. \end{equation}

\begin{Proposition}[\cite{Tas81}*{Proposition 1}] \label{prop:tasaka} The lattice associated to the even self-dual code $C$ defined by 
\[ L = \left \langle \alpha_i, \textstyle \frac{1}{2}\alpha_B :  1 \le i \le N,\, B \in C \right \rangle = \bigcup_{B \in C} \left\{ \textstyle \frac{1}{2} \alpha_B + L' \right\}, \]
is even and unimodular.
\end{Proposition}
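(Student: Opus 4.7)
The plan is to prove evenness and unimodularity separately using the coset description $L = \bigcup_{B \in C} \{\frac{1}{2}\alpha_B + L'\}$, where $L' = \langle \alpha_1, \ldots, \alpha_N\rangle$. First I would verify that the two descriptions of $L$ agree and that $L$ is indeed a lattice. For this, the main identity is that for $B, B' \in C$,
\[ \textstyle \frac{1}{2}\alpha_B + \frac{1}{2}\alpha_{B'} = \frac{1}{2}\alpha_{B+B'} + \alpha_{B \cap B'}, \]
where $B + B'$ is the symmetric difference from~\eqref{eqn: codewordaddition}. Since $C$ is closed under addition we have $B + B' \in C$, and since $\alpha_{B \cap B'} \in L'$, the coset description is closed under addition. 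Combined with the containments $\alpha_i \in L'$, this matches the generating-set description.

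For evenness, I would take a generic element $v = \frac{1}{2}\alpha_B + \lambda$ with $B \in C$ and $\lambda \in L'$, and expand
\[ \langle v, v \rangle = \textstyle \frac{1}{4}\langle \alpha_B, \alpha_B \rangle + \langle \alpha_B, \lambda \rangle + \langle \lambda, \lambda \rangle. \]
Orthogonality of the $\alpha_i$ and the normalization $\langle \alpha_i,\alpha_i\rangle = 2$ make $L'$ itself even with integer Gram matrix $2I_N$, so the last two terms are even integers. The first term equals $|B|/2$, which lies in $2\Z$ precisely when $|B| \equiv 0 \pmod 4$. This is the step that forces $C$ to be doubly even (which I take to be the intended reading of ``even self-dual'' in this context, consistent with the doubly even codes discussed throughout the paper).

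For unimodularity, I would use the finite-index inclusion $L' \subseteq L$ and the determinant relation $\det L' = [L:L']^2 \det L$. Two cosets $\frac{1}{2}\alpha_B + L'$ and $\frac{1}{2}\alpha_{B'} + L'$ coincide iff $\frac{1}{2}(\alpha_B - \alpha_{B'}) \in L'$; expanding in the $\alpha_i$-basis, every coefficient of $\frac{1}{2}(\alpha_B - \alpha_{B'})$ lies in $\frac{1}{2}\Z$ and is an integer only when that coordinate is absent from the symmetric difference $B \triangle B'$, forcing $B = B'$. Hence $[L:L'] = |C| = 2^{N/2}$ by self-duality of $C$. Since $\det L' = 2^N$, it follows that $\det L = 2^N/(2^{N/2})^2 = 1$.

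The main obstacle, if any, is the coset-counting argument used to pin down $[L:L']$; the rest of the proof is bookkeeping with the combinatorics of symmetric differences.
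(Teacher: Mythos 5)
The paper gives no proof of this proposition at all---it is imported verbatim from \cite{Tas81}*{Proposition 1}---so there is no internal argument to compare yours against; judged on its own, your proof is correct and complete. The identity $\frac{1}{2}\alpha_B + \frac{1}{2}\alpha_{B'} = \frac{1}{2}\alpha_{B+B'} + \alpha_{B\cap B'}$ is exactly what is needed to see that the union of cosets is closed under addition and coincides with the generating-set description, and your coset count $[L:L'] = |C| = 2^{N/2}$ (distinct codewords give distinct cosets because $\frac{1}{2}(\alpha_B - \alpha_{B'})$ has half-integer coordinates on the symmetric difference) combined with $\det L' = 2^N$ correctly yields $\det L = 1$. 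Your reading of ``even self-dual'' as doubly even is also the right call and worth flagging: self-duality alone only forces $|B| \equiv 0 \pmod{2}$, which makes $\frac{1}{4}\langle \alpha_B, \alpha_B\rangle = |B|/2$ an integer but not necessarily an even one, so evenness of $L$ genuinely requires $4 \mid |B|$; this is consistent with the paper's later remark that a Construction~A lattice is even iff the code is doubly even, and with the standing hypothesis $8 \mid N$. One minor addendum: with the paper's definition of unimodular as $\det L = \pm 1$ your determinant computation suffices, and if one wants the stronger conclusion $L = L^\ast$, integrality of $L$ comes for free from evenness by polarization.
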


The construction of Proposition~\ref{prop:tasaka} applied to the Hamming code $\calH$ produces the $E_8$ lattice. For a direct proof that the lattice constructed is isometric to the $E_8$ lattice, see \cite{Griess11}*{p.\ 63}. The virtue of this construction in general is that it allows us to explicitly describe the action of $g = \iota(\gbar)$ with $\gbar \in \Aut C$ on elements $\lambda = \frac{1}{2} \alpha_B + \sum_{i=1}^N x_i \alpha_i \in L$ in a straightforward manner. In particular, 
\begin{equation} \label{eqn:gaction} g(\lambda) \coloneqq\frac{1}{2} \alpha_{\gbar(B)} + \sum_{i=1}^N x_i \alpha_{\gbar(i)}.
\end{equation}
To determine whether $\lambda$ is invariant under $g$, consider the disjoint cycle form of $\gbar \in \Aut C$ viewed as an element of $S_N$. Such an element $\lambda$ is invariant under $g$ if and only if the set $B$ is $\gbar$-invariant and if the following condition is satisfied: when $i$ and $j$ both appear in the same disjoint cycle in the decomposition of $\gbar$, then $x_i = x_j$. We define the sublattice fixed by $g$ to be $L^g \coloneqq \{\lambda\in L\mid g(\lambda)=\lambda\}$. That is,
\begin{equation}\label{fixedsublattice}
L^g=\bigcup_{B\in C^\gbar}\left\{\textstyle \frac{1}{2} \alpha_B + \sum_i x_i \alpha_i:x_i=x_j
\mbox{ if $i$ and $j$ are in the same cycle of $\gbar$}\right\}.\end{equation}

\subsubsection{The Leech lattice and super codes} \label{subsec: supercodes} The construction in Proposition~\ref{prop:tasaka} applied to the Golay code $\calG$ again produces the Niemeier lattice $N(A_1^{24})$, so in order to obtain the Leech lattice we must use a more general construction. In full generality, the following can be applied to any even self-dual code with minimum weight strictly greater than $4$, also called \emph{super codes} in \cite{Tas81}. Such codes only exist when the length $N$ is at least $24$, and the Golay code is the unique such code (up to isomorphism) with length $24$. Keeping the notation above, we let
$\Lambda_0 \coloneqq \{ \sum_{i=1}^N x_i \alpha_i : \sum_i x_i \equiv 0 \bmod{2} \}$, $\Lambda_1 \coloneqq \{ \sum_{i=1}^N y_i \alpha_i : \sum_i y_i \equiv 1 \bmod{2} \}$, and define the lattices
\begin{eqnarray}
    \mathcal{L}_0(C) &\coloneqq& \bigcup_{B \in C} ( \textstyle \frac{1}{2}\alpha_B + \Lambda_0) \cup (\frac{1}{4} \alpha_{\Omega} + \frac{1}{2} \alpha_B + \Lambda_0) \\
    \label{eqn: altconstruction} \mathcal{L}_1(C) &\coloneqq& \bigcup_{B \in C} ( \textstyle \frac{1}{2}\alpha_B + \Lambda_0) \cup (\frac{1}{4} \alpha_{\Omega} + \frac{1}{2} \alpha_B + \Lambda_1).
\end{eqnarray}
    Both are even lattices and $\mathcal{L}_j(C)$ is unimodular precisely when $\frac{1}{8}N\equiv j \bmod {2}$ \cite{Tas81}*{\S3}. Thus, the Leech lattice is isometric to the lattice $\mathcal{L}_1(\calG)$. Given $g$ arising from $\gbar \in \Aut \calG$, the action on the elements of $\Lambda_{24} = \mathcal{L}_1(\calG)$ is 
    \begin{eqnarray} g( \textstyle \frac{1}{2}\alpha_B + \sum_i x_i \alpha_i) &= &\textstyle \frac{1}{2} \alpha_{\gbar(B)} + \sum_i x_i \alpha_{\gbar(i)}  \\
    \label{eqn: gactionsupercode} g(\textstyle \frac{1}{4} \alpha_{\Omega} + \frac{1}{2}\alpha_B + \sum_i y_i \alpha_i) &= & \textstyle \frac{1}{4} \alpha_{\Omega} + \frac{1}{2} \alpha_{\gbar(B)} + \sum_i y_i \alpha_{\gbar(i)} 
    \end{eqnarray}
    and this description will allow us to compute lattice theta series of fixed sublattices.

\subsection{Theta functions and theta quotients of fixed sublattices} \label{subsec: thetaseries} Let $C$ be a code of length $N$ and let $L$ be the associated code-lattice via either Construction $A$ or Propostion~\ref{prop:tasaka}. For a given $g = \iota(\gbar) \in \Aut L$, the theta series $\theta_{L^g}(z)$ of the $g$-fixed sublattice of $L$ is a {holomorphic} modular form of weight $k \coloneqq \frac{1}{2} \sum_t r_t$. The eta product associated to $g$, denoted $\eta_g(q)$, is of weight $k$ and defined in terms of the cycle type of $g$ by
\begin{equation}
    \eta_g(q):=\prod_t\eta(q^t)^{r_t}.
\end{equation}
Then the \defi{lattice theta quotient} $(\theta_{L^g}(q)/\eta_g(q))^{24/N}$ is a weakly holomorphic modular function. In Section~\ref{subsec: sublatsfixedbysubgroups}, we will prove Theorem~\ref{thm: mainintro} which is a generalization of this fact to theta series associated to subgroups of automorphisms $G \subset \Aut L$.

\subsubsection{Computing theta functions via Jacobi theta functions} Alternatively, given a code-lattice $L$ constructed from a code $C$ via Proposition~\ref{prop:tasaka}, we can also compute the theta function of a fixed sublattice $L^g$ by $g \in \iota(\Aut C) \subset \Aut L$ by explicitly expressing $\theta_{L^g}(q)$ in terms of the classical Jacobi theta functions
\begin{eqnarray} \label{jacobithetas}
   \vartheta_2(q)&=&\theta_{\left(\Z+ \textstyle \frac12\right)}(q)=  \sum_{n \in \Z} q^{(n + \frac 12)^2/2}\\
    \vartheta_3(q)&=&\theta_{\Z}(q)= \sum_{n \in \Z} q^{n^2/2} \\
    \vartheta_4(q) &= &\sum_{n \in \Z} (-1)^n q^{n^2/2},
\end{eqnarray}
as detailed in \cite{KT86}. Specifically, the description of the fixed sublattice given in equation \eqref{eqn:gaction} gives a decomposition of $L^g$ into a union of discrete subsets in $\R^N$, each of which can be further decomposed into a sum of discrete subsets which are contained in mutually orthogonal linear subspaces in $\R^N$. For discrete subsets $X,Y \subset \R^N$, we have $\theta(X \cup Y,z) = \theta(X,z) + \theta(Y,z)$ and when $X,Y$ are contained in orthogonal subspaces, $\theta(X + Y,z) = \theta(X,z)\theta(Y,z)$. We may use these facts in order to compute the desired lattice theta series $\theta_{L^g}(q)$. We illustrate this via an example. 

\begin{Example} \label{ex:g_invariants} Let $\gbar = (2,8,4,6)(3,5) \in \Aut \calH$ and $g=\iota(\gbar)\in \Aut E_8$. The codewords fixed by $\gbar$ are $B \in \{ \emptyset, \Omega, \{2,4,6,8\}$, $\{1,3,5,7\} \, \}$. 
Taking $B = \{2,4,6,8\}$, we find any element of $E_8$ of the form $\frac{1}{2} 
\alpha_B + \sum_i x_i \alpha_i$ invariant under $g$ must have $x_2 = x_4 = x_6 = x_8$ and $x_3 = x_5$. Thus the corresponding term in the union \eqref{fixedsublattice} is 
\[ \textstyle \frac12 \alpha_{\{2,4,6,8\}} +\Z \alpha_{\{2,4,6,8\}}+  \Z \alpha_{\{3,5\}} + \Z \alpha_{1} + \Z \alpha_{7} = \left(\Z + \textstyle \frac{1}{2}\right) \alpha_{\{2,4,6,8\}}+  \Z \alpha_{\{3,5\}} + \Z \alpha_{1} + \Z \alpha_{7}\]
viewed as a discrete subset in $\R^8$. The formulas preceding the example then give the theta series for the discrete subset corresponding to $B = \{2,4,6,8\}$ as a product of $4$ terms, $\vartheta_2(q^8) \vartheta_3(q^4) \vartheta_3(q^2) \vartheta_3(q^2)$. Repeating this process for each remaining fixed codeword above, we find that the theta series for the fixed sublattice $L^g$ is given by
{\small \begin{align*}  \theta_{L^g}(q) &= \vartheta_3(q^8)\vartheta_3(q^4)\vartheta_3(q^2)^2 + \vartheta_2(q^8)\vartheta_2(q^4)\vartheta_2(q^2)^2 + \vartheta_2(q^8)\vartheta_3(q^4)\vartheta_3(q^2)^2 + \vartheta_3(q^8)\vartheta_2(q^4)\vartheta_2(q^2)^2 \\
& = 1 + 14q + 30q^2 + 36q^3 + 62q^4 + 72q^5 + 68q^6 + 112q^7 + 126q^8 +98q^9 + O(q^{10}).
\end{align*} }
\end{Example} 

A similar process, together with equations \eqref{eqn: altconstruction} $-$ \eqref{eqn: gactionsupercode}, can also be used to give an explicit description of the theta functions of fixed sublattices of the Leech lattice and other lattices associated to super codes; see \cite{KT86}*{\S1} for further details and the repository~\cite{BK23} for an implementation in \texttt{Magma}.

\subsection{Sublattices fixed by subgroups of automorphisms} \label{subsec: sublatsfixedbysubgroups}
 
We now want to study sublattices fixed by subgroups of automorphisms of $G$ (up to conjugacy) and consider theta quotients associated to these sublattices, analogous to those in Section~\ref{subsec: thetaseries}. In order to compute fixed sublattices, we make use of the following lemma.

\begin{Lemma} \label{lem: LGintersection} Let $\{g_1, \dots, g_r\}$ be a set of generators for the subgroup $G$ of $\Aut L$. Then the sublattice of $L$ fixed by $G$ is the lattice \[ L^G = \bigcap_{g \in G} L^g = \bigcap_{i=1}^r L^{g_i}.\] 
\end{Lemma}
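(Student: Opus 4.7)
The plan is to prove the two equalities in turn, both by essentially unpacking the definition of the fixed sublattice and invoking standard facts about generating sets of groups.

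For the first equality $L^G = \bigcap_{g \in G} L^g$, I would simply unravel the definitions. By definition, $L^G = \{\lambda \in L : g\lambda = \lambda \text{ for all } g \in G\}$, and for each individual $g \in G$, we have $L^g = \{\lambda \in L : g\lambda = \lambda\}$ from \eqref{fixedsublattice}. A lattice vector $\lambda$ satisfies $g\lambda = \lambda$ for every $g \in G$ precisely when $\lambda \in L^g$ for every $g \in G$, which is exactly membership in $\bigcap_{g \in G} L^g$. So this equality is immediate.

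For the second equality, the inclusion $\bigcap_{g \in G} L^g \subseteq \bigcap_{i=1}^r L^{g_i}$ is clear because each generator $g_i$ is itself an element of $G$. The substantive direction is the reverse inclusion. Here I would suppose $\lambda \in \bigcap_{i=1}^r L^{g_i}$, so $g_i \lambda = \lambda$ for each $i$, and show by induction on the word length that $g \lambda = \lambda$ for every $g \in G$. The key observations are: (i) if $g_i \lambda = \lambda$, then applying $g_i^{-1}$ to both sides yields $g_i^{-1} \lambda = \lambda$, so the set of elements fixing $\lambda$ is closed under inverses; and (ii) if $g\lambda = \lambda$ and $h \lambda = \lambda$, then $(gh)\lambda = g(h\lambda) = g\lambda = \lambda$, so this set is closed under products. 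Since every $g \in G$ can be written as a product of the $g_i$ and their inverses, it follows that $g \lambda = \lambda$, giving $\lambda \in \bigcap_{g \in G} L^g$.

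There is no genuine obstacle here: the statement is a direct consequence of the definition of a generating set for a group combined with the observation that the stabilizer of any lattice vector $\lambda$ inside $\Aut L$ is itself a subgroup. In fact, a cleaner and shorter way to phrase the argument is to note that $\operatorname{Stab}_G(\lambda) \coloneqq \{g \in G : g\lambda = \lambda\}$ is a subgroup of $G$; if it contains the generating set $\{g_1, \dots, g_r\}$ then it must equal $G$, whence $\lambda \in L^G$. I would likely present the proof in this compact form, since the only fact being used beyond definitions is the elementary group-theoretic principle that a subgroup containing a generating set of $G$ is all of $G$.
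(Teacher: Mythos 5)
Your proposal is correct and follows essentially the same route as the paper: the first equality is definitional, the forward containment is immediate, and the reverse containment follows because every element of $G$ is a word in the generators, so a vector fixed by each $g_i$ is fixed by all of $G$. Your explicit remark that the stabilizer of $\lambda$ is a subgroup (hence closed under inverses as well as products) is a slightly more careful packaging of the same argument, which the paper compresses into the phrase ``expressed as a word in the generators.''
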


\begin{proof}
The first equality is by definition of a fixed sublattice, so we verify the latter. The forward containment $\bigcap_{g \in G} L^g\subseteq L^{g_1} \cap \cdots \cap L^{g_r} $ is immediate. Conversely, the set $L^{g_1} \cap \cdots \cap L^{g_r}$ is contained in $L^g$ for each $g \in G$ since any $g$ can be expressed as a word in the generators, and thus if $\lambda \in L$ is fixed by each of $g_1, \dots, g_r$, then it is fixed by $g$.
\end{proof}

Analogously to \eqref{fixedsublattice}, given $G \coloneqq \iota(\overline{G}) \subset \Aut L$ we may write
\begin{equation}\label{eqn: fixedsublatbyG}
L^G=\bigcup_{B\in C^{\overline{G}}}\left\{\textstyle \frac{1}{2} \alpha_B + \sum_i x_i \alpha_i:x_i=x_j
\mbox{ if $i$ and $j$ are in the same orbit under $\overline{G}$}\right\}.\end{equation}

\begin{Definition} \label{def: orbittype} Let $\overline{G}$ be a subgroup of $\Aut C$ and let $G \subseteq \Aut L$ be the image of $\overline{G}$ under the embedding $\iota$.  Since $\overline{G}$ is a subgroup of $S_N$, let $r_t$ denote the number of orbits of $\{1, \dots, N\}$ of size $t$ under the action of $\overline{G}$. We define the \defi{eta product associated to $L^G$} by \begin{equation} \eta_G(q) := \prod_t \eta(q^t)^{r_t}.\end{equation}
We further define the \defi{orbit type} of $G$ to be $\prod_t t^{r_t}$. 
\end{Definition}

The orbits of the action of $\overline{G}$ on the set $\{1, \dots, N\}$ and the codewords fixed by $\overline{G}$ have the following relationship.

\begin{Lemma} \label{lem: orbitdecomp} Let $\overline{G}$ be a subgroup of $\Aut C$ with orbit type $\prod_{i=1}^\ell t_i^{r_i}$ and let $m = \sum_{i=1}^{\ell}r_i$. Let $\{O_i: 1 \le i \le m\}$ denote the orbits of the action of $\overline{G}$. A codeword $B \in C$ is fixed by $\overline{G}$ if and only if the decomposition $\Omega = \cup_i O_i$ is a refinement of the decomposition $\Omega = B \cup (\Omega \smallsetminus B)$. 
\end{Lemma}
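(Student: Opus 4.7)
The plan is to unpack the refinement condition into an equivalent statement about orbits, then verify both directions by a direct set-theoretic argument using the definition of $\overline{G}$-invariance.

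First I will recall that a partition $\mathcal{P}$ refines a partition $\mathcal{Q}$ of $\Omega$ precisely when every block of $\mathcal{P}$ is contained in some block of $\mathcal{Q}$. Applied to the orbit decomposition $\Omega = \bigcup_i O_i$ and the two-block partition $\Omega = B \cup (\Omega \smallsetminus B)$, this condition is equivalent to saying that for each $i$, either $O_i \subseteq B$ or $O_i \subseteq \Omega \smallsetminus B$. So the lemma reduces to proving that $B \in C^{\overline{G}}$ if and only if no orbit of $\overline{G}$ meets both $B$ and its complement.

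For the ($\Leftarrow$) direction, I will assume every orbit $O_i$ lies entirely in $B$ or entirely in $\Omega \smallsetminus B$. Given any $\sigma \in \overline{G}$ and any $j \in B$, the element $\sigma(j)$ lies in the same orbit as $j$, which by hypothesis is contained in $B$; hence $\sigma(B) \subseteq B$. Applying the same argument to $\sigma^{-1}$ yields $\sigma(B) = B$, so $B$ is fixed by $\overline{G}$. For ($\Rightarrow$), I will assume $B$ is $\overline{G}$-invariant and pick any $j \in B$. For every $\sigma \in \overline{G}$ we have $\sigma(j) \in \sigma(B) = B$, so the entire orbit of $j$ lies in $B$. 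Running this argument for every $j \in B$ and symmetrically for every $j \in \Omega \smallsetminus B$ shows that each orbit is confined to one side of the partition, giving the required refinement.

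The argument is essentially routine once the refinement condition is translated into the containment condition on orbits; there is no real obstacle. The only subtle point worth flagging is the identification (set by the earlier convention in the paper) between a codeword $B \in C$ and the support set $\{i : b_i \neq 0\} \subseteq \Omega$, which ensures that the $\overline{G}$-action on codewords by coordinate permutation corresponds exactly to the action on subsets of $\Omega$ used above.
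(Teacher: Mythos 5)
Your proof is correct and follows essentially the same route as the paper: both arguments reduce the refinement condition to the statement that each orbit lies entirely in $B$ or entirely in $\Omega \smallsetminus B$, and verify this via the invariance $\sigma(B)=B$ (your forward direction is in fact a slightly more streamlined version of the paper's, which routes through transitivity and disjoint cycle decompositions, and you spell out the converse that the paper calls immediate). No gaps.
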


\begin{proof}
Suppose that $B \in C$ is fixed by the action of $\overline{G}$. Since $\overline{G}$ acts transitively on the orbit $O_i$, for each pair of elements $u,v \in O_i$, there exists $\gbar \in \overline{G}$ such that $\gbar(u) = v$. Thus $u,v$ are in the same disjoint cycle decomposition of $\gbar$. Since the disjoint cycle decomposition of any such $\gbar$ is a refinement of $\Omega = B \cup (\Omega \smallsetminus B)$, either $u,v$ are both in $B$ or both in $\Omega \smallsetminus B$. Since this holds for all elements of $O_i$, each $O_i$ is either a subset of $B$ or $\Omega \smallsetminus B$. The converse is immediate.
\end{proof}

We now give a proof of Theorem~\ref{thm: mainintro}, saving the discussion regarding non-monstrous replicable functions associated to non-cyclic subgroups $\overline{G} \subset \Aut C$ for Section~\ref{sec: repfunctionscodelats}. 

\begin{Theorem*}[Theorem \ref{thm: mainintro}] Let $C$ be a doubly even self-dual binary linear code of length $N$ and let $L$ be the associated code-lattice. Let $G \subset \Aut L$ be a subgroup of automorphisms in the image of $\Aut C$ under the natural embedding and let $L^G$ be the sublattice fixed by $G$. Then the lattice theta quotient $(\theta_{L^G}(q)/\eta_G(q))^{24/N}$ is a weakly holomorphic modular function. In the case that $G = \langle g \rangle$ is a cyclic subgroup, this recovers the usual lattice theta quotient associated to $g$. Moreover, this construction can produce non-monstrous replicable functions not necessarily associated to any individual automorphisms.
\end{Theorem*}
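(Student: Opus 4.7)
The plan is to match weights of $\theta_{L^G}$ and $\eta_G$ to obtain a weight-$0$ quotient, analyze its leading $q$-term, and interpret the $24/N$-th power as a weakly holomorphic modular function.

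First, using \eqref{eqn: fixedsublatbyG}, the rank of $L^G$ equals the number $m = \sum_t r_t$ of orbits of $\overline{G}$ on $\Omega$. Hence $\theta_{L^G}$ is a holomorphic modular form of weight $m/2$ for a congruence subgroup determined by the level of $L^G$. The eta product $\eta_G = \prod_t \eta(q^t)^{r_t}$ is a modular form (with multiplier) of weight $\tfrac{1}{2}\sum_t r_t = m/2$ by the standard transformation properties of $\eta$. Since $\eta$ does not vanish on $\mathbb{H}$, the quotient $\theta_{L^G}/\eta_G$ is a weight-$0$ weakly holomorphic modular function on some congruence subgroup $\Gamma \subseteq \SL_2(\Z)$.

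Second, I would compute the leading $q$-term. The constant term of $\theta_{L^G}$ is $1$, contributed by the zero vector, and since the orbits of $\overline{G}$ partition $\Omega$ we have $\sum_t t r_t = N$, giving $\eta_G(q) = q^{N/24}(1 + O(q))$. Thus
\[
\frac{\theta_{L^G}(q)}{\eta_G(q)} = q^{-N/24}\bigl(1+O(q)\bigr),
\]
and the formal power series $(1+O(q))^{24/N}$ is well-defined with rational coefficients via the binomial series. This shows $(\theta_{L^G}/\eta_G)^{24/N}$ is a well-defined formal $q$-series of the form $q^{-1} + O(1)$.

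Third, since $\theta_{L^G}/\eta_G$ is nonvanishing and holomorphic on $\mathbb{H}$, its $24/N$-th root is a single-valued analytic function on $\mathbb{H}$ once we fix the branch whose $q$-expansion begins $q^{-1} + O(1)$. Its transformation under $\Gamma$ is controlled by the weight-$0$ transformation of $\theta_{L^G}/\eta_G$ together with a $24/N$-th root-of-unity multiplier; passing to a finite-index subgroup of $\Gamma$ kills this ambiguity and yields a genuine weight-$0$ weakly holomorphic modular function. The cyclic case is then immediate: when $G = \langle g \rangle$, the orbits of $\overline{G}$ coincide with the cycles of $\overline{g}$, so the orbit type equals the cycle type and $\eta_G = \eta_g$, recovering the classical lattice theta quotient.

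For the final assertion, I would defer to explicit examples in Section \ref{sec: repfunctionscodelats}, where concrete non-cyclic subgroups of $\Aut C$ produce replicable theta quotients (in the sense of Definition \ref{def:ReplicableFunction}) that do not appear as McKay-Thompson series of the Monster. The main obstacle is the fractional exponent: for doubly even self-dual codes $N$ is a multiple of $8$, and $24/N$ is an integer only for $N \in \{8, 24\}$. For larger $N$ (such as $N = 16, 32, \dots$), the individual exponents $24 r_t/N$ in $\eta_G^{24/N}$ need not be integers, so some care is required to pass from the formal $q$-series to an honest modular function on an explicit congruence subgroup.
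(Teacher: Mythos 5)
Your proof is essentially correct, but it takes a genuinely different route from the paper's. The paper does not argue abstractly from weight matching; instead it uses Lemma~\ref{lem: orbitdecomp} to write $\theta_{L^G}(q)$ explicitly as a sum over the fixed codewords $B \in C^{\overline{G}}$ of products $\prod \vartheta_2(2|O_i|z)\prod\vartheta_3(2|O_i|z)$, converts each term into an eta quotient, and verifies the congruence conditions of \cite{Ono04}*{Theorem 1.64} term by term (using $|B|\equiv 0 \bmod 4$ from double evenness and $N \equiv 0 \bmod 8$) to exhibit an explicit level $M = N\prod_i t_i$ for both $\theta_{L^G}$ and $\eta_G^{24/N}$. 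What the paper's computation buys is this concrete level and a decomposition of $\theta_{L^G}$ that is reused later (e.g.\ in Theorem~\ref{thm: rA1^N/r} and Proposition~\ref{prop:2DN/2*}); what your argument buys is brevity and, notably, a more honest treatment of the fractional exponent, which the paper glosses over: your observation that $\theta_{L^G}/\eta_G$ is nonvanishing on $\H$, so a single-valued branch of the $24/N$-th power exists and transforms under $\Gamma$ by a character $\gamma\mapsto\zeta_\gamma$, is exactly the missing justification. To close your own argument fully you should note that this character has finite order --- since $(F^{24/N})^{N}=F^{24}$ and the multiplier of $F^{24}$ is a root of unity of bounded order, each $\zeta_\gamma$ lies in a fixed finite cyclic group --- and that the paper's definition of a weakly holomorphic modular form already permits a root-of-unity multiplier $\varepsilon_\gamma$, so you need not even pass to the kernel (which is just as well, since the kernel of such a character need not be a congruence subgroup). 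Your treatment of the cyclic case and your deferral of the non-monstrous replicable examples to Section~\ref{sec: repfunctionscodelats} match the paper exactly.
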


\begin{proof}
Let $L$ be the code-lattice associated to a code $C$ and let $G = \iota(\overline{G})$ be a subgroup of automorphisms of $L$ under the natural embedding. Assume that the orbit type of $\overline{G}$ is $\prod_{i=1}^\ell t_i^{r_i}$, so that $N = \sum_{i=1}^\ell t_i r_i$. We claim that both the theta series $\theta_{L^G}(q)$ and the eta product $(\eta_G(q))^{24/N}$ are modular forms of weight $k \coloneqq \frac{1}{2} \sum_{i=1}^\ell r_i$ and level $M \coloneqq N \prod_{i=1}^{\ell} t_i$, which we now explain.

The theta series of $L^G$ is determined by the codewords fixed by $\overline{G}$ as in \eqref{eqn: fixedsublatbyG}. Moreover, by Lemma~\ref{lem: orbitdecomp}, each fixed codeword $B$ has the property that the orbit decomposition of $\overline{G}$, written $\Omega = \cup_{i=1}^{m}O_i$, is a refinement of the decomposition $\Omega = B \cup (\Omega \smallsetminus B)$. Thus given $B \in C^{\overline{G}}$, after possibly reordering, we can write \[B = \cup_{i=1}^{s} O_i \qquad \text{and} \qquad \Omega \smallsetminus B = \cup_{i=s+1}^{m} O_i.\] Hence the theta series for the discrete subset in the union \eqref{eqn: fixedsublatbyG} associated to $B$ has the form \begin{equation} \label{eqn: thetaterm} \prod_{i=1}^s \vartheta_2(2|O_i|z) \prod_{i= s+1}^m \vartheta_3(2|O_i|z).\end{equation} 
Since $\vartheta_2(z) = 2\eta(2z)^2\eta(z)^{-1}$ and $\vartheta_3(2z) = \eta(2z)^5\eta(z)^{-2}\eta(4z)^{-2}$, a term of the form \eqref{eqn: thetaterm} is the eta quotient 
\begin{equation}\label{eqn: thetatermetaq} \prod_{i=1}^s 2 \, \eta(4|O_i|z)^2\eta(2|O_i|z)^{-1} \prod_{i=s+1}^m \eta(2|O_i|z)^5 \eta(|O_i|z)^{-2}\eta(4|O_i|z)^{-2}.\end{equation}
Now we can compute a level for each term \eqref{eqn: thetatermetaq} using \cite{Ono04}*{Theorem 1.64}.  Since $C$ is doubly even, we have $|B| = \sum_{i=1}^s |O_i| \equiv 0 \bmod{4}$. Therefore   \[ 8 \sum_{i=1}^s |O_i| - 2 \sum_{i=1}^s |O_i| + 10 \sum_{i=s+1}^m |O_i| - 10 \sum_{i=s+1}^m |O_i| = 6 \sum_{i=1}^s |O_i| = 6|B| \equiv 0 \bmod{24}. \] Since $N \equiv 0 \bmod{8}$ so that $2|O_i|$ and $4|O_i|$ divide $M$ for $1 \le i \le m$, we also have
\[ \sum_{i=1}^s \frac{2M}{4|O_i|} - \frac{M}{2|O_i|} + \sum_{i=s+1}^m \frac{5M}{2|O_i|} - \frac{2M}{|O_i|} - \frac{2M}{4|O_i|}
\equiv 0 \bmod{24}. \] This tells us that $\theta_{L^G}(q)$ is a sum of terms which are each {modular} of level $M$. From \eqref{eqn: thetatermetaq}, we also compute that the weight is
\[ \frac{1}{2}\left(s(2-1) + (m-s)(5-2-2)\right) = \frac{1}{2} m = \frac{1}{2} \sum_{i=1}^{\ell} r_i = k. \]
We note that this is the expected weight since the fixed sublattice $L^G$ has rank $m$. 

By construction, $\eta_G(q)$ also has weight $k$. Furthermore, the eta product $\eta_G(q)^{\frac{24}{N}}$ satisfies
\begin{align*} \frac{24}{N} \sum_{i=1}^\ell t_i r_i &\equiv 0 -\frac{24}{N} \cdot N \equiv 0 \bmod{24}, \quad \text{since $\sum_{i=1}^\ell t_i r_i = N$, and }\\
 \frac{24}{N} \sum_{i=1}^{\ell} \frac{M}{t_i} r_i &\equiv \frac{24}{N} \sum_{i=1}^{\ell} N t_1 \dots t_{i-1} \hat{t_i} t_{i+1} \dots t_\ell r_i \equiv 0 \bmod{24}, 
 \end{align*}
so it, too, has level $M$. Since $\eta(z)$ is analytic and nonvanishing on the upper half plane and meromorphic at each cusp, the theta quotient $(\theta_{L^G}(z)/\eta_G(z))^{24/N}$ is a weakly holomorphic modular function of level $M$, although this level may not be optimal.

Finally, this construction recovers the usual lattice theta quotient associated to $g\in \Aut L$ when $G = \langle g \rangle = \langle \iota(\gbar) \rangle$, since for a single element $\bar{g} \in S_N$, the disjoint cycle decomposition precisely describes the orbits of $\{1, \dots, N\}$ under the action of $\bar{g}$ and hence of the whole subgroup $\langle \gbar \rangle$.
\end{proof}

\begin{Example} Consider the subgroup $\overline{G} \subset \Aut \calH$ generated by 
\[ g_1 = (4,6)(5,7), \qquad g_2 = (4,7)(5,6), \quad \text{and} \quad g_3 = (1,3)(2,8),\] 
and let $G=\iota(\overline{G})\subset\Aut(E_8)$. We determine that the the orbits of the set $\Omega$ under the 
 action of $\overline{G}$ are $\{1,3\}, \{2,8\}, \{4,5,6,7\}$,
 hence the orbit type of $G$ is $2^2 4^1$ and $\eta_G(q) = \eta(q^2)^2\eta(q^4)$. Then, as in Example~\ref{ex:g_invariants}, the set of $\overline{G}$-fixed codewords are $\{ \emptyset, \Omega, \{1,2,3,8\}, \{4,5,6,7\} \}$ and, for example, the discrete subset associated to the codeword $B = \{1,2,3,8\}$ is $(\Z + \frac12) \alpha_{\{1,3\}} + (\Z + \frac12) \alpha_{\{2,8\}} + \Z \alpha_{\{4,5,6,7\}}$. Repeating this for each fixed codeword, we eventually find that the theta series of the $G$-fixed sublattice is 
\begin{eqnarray*} \theta_{E_8^G}(q) &=& \vartheta_3(q^4)^2\vartheta_3(q^8) + \vartheta_2(q^4)^2\vartheta_2(q^8) + \vartheta_2(q^4)^2\vartheta_3(q^8) + \vartheta_3(q^4)^2\vartheta_2(q^8) \\
& = & 1 + 6q + 12q^2 + 8q^3 + 6q^4 + 24q^5 + 24q^6 + 12q^8 + 30q^9  + O(q^{10}),
\end{eqnarray*}
the theta series for the lattice $A_1(2)^3$. Overall, the lattice theta quotient associated to $G$ is
\[ \left(\frac{E_8^G(q)}{\eta_G(q)}\right)^3 = q^{-1}(1 + 18q + 150q^2 + 780q^3 +2928q^4 + 88926q^5 + 24032q^6 + \dots),\]
and is a modular function of level $64$.
\end{Example} \medskip

The orbit type is a natural generalization of cycle type when one considers subgroups of lattice automorphisms in the image of those from the code. The cycle type of a general lattice automorphism is defined in terms of the decomposition of the characteristic polynomial into a product of cyclotomic polynomials \cite{GK19}*{Appendix A}. For a lattice automorphism in the image of an automorphism of the code, this coincides with the cycle type as defined in Section~\ref{subsec: linearcodes}.

\begin{Question}
Can one generalize orbit type to an arbitrary subgroup of lattice automorphisms? 
\end{Question}

\section{Lattice vertex operator algebras and automorphisms} \label{sec: latticeVOAs}
In this section, we review the construction of lattice vertex operator algebras (lattice-VOAs) which are VOAs built from even, positive definite lattices. We then recall automorphisms of VOAs and discuss those which are lifted from automorphisms of the lattice.
\subsection{Lattice-VOA construction} \label{subsec: Lattice-VOA construction}
Here we give a brief overview of the lattice-VOA construction in order to fix notation and ideas. For further details, we refer the reader to \cites{Mollerthesis, FLM88}. Given an even positive definite lattice $L$, we can form the \defi{twisted group algebra} $\mathbb{C}_{\epsilon}[L]$ spanned by the $\C$-basis $\{\mathfrak{e}_{\alpha}\}_{\alpha \in L}$ as follows. There exists a $2$-cocycle\\ $\epsilon: L\times L \to \{ \pm 1 \}$ such that $\epsilon(\alpha, \alpha)=(-1)^{\langle \alpha, \alpha \rangle / 2}$ and $\dfrac{\epsilon(\alpha, \beta)}{\epsilon(\beta, \alpha)}=(-1)^{\langle \alpha, \beta \rangle}$ for $\alpha, \beta\in L$ and where $\langle \cdot,\cdot \rangle$ is the inner product on $L$. Multiplication on $\C_{\epsilon}[L]$ is defined as $\mathfrak{e}_{\alpha}\mathfrak{e}_{\beta}= \epsilon(\alpha, \beta) \mathfrak{e}_{\alpha+\beta}$ and we define the weight of an element $\mathfrak{e}_{\alpha}$ to be ${\langle \alpha, \alpha \rangle / 2}$. Then the grading on $\C_{\epsilon}[L]$ is given in terms of weight.

Let $\mathfrak{h}= L\otimes_{\mathbb{Z}} \mathbb{C}[t,t^{-1}]$, then the Heisenberg algebra is defined as $\hat{\mathfrak{h}}=\mathfrak{h}\oplus \mathbb{C}\bf{k}$ where $\bf{k}$ is a central element. This is a Lie algebra, with Lie bracket for $h_1\otimes t^{n_1}, h_2\otimes t^{n_2}\in\hat{\mathfrak{h}}$ defined by linearly extending the bracket given by \[ [h_1\otimes t^{n_1}, h_2\otimes t^{n_2}]=\langle h_1,h_2\rangle n_1 \delta_{n_1+n_2} \bf{k},  \] where $h_1,h_2 \in \mathfrak{h}$ and $n_1,n_2\in\mathbb{Z}$. The Lie bracket of any element of $\hat{\mathfrak{h}}$ with $\bf{k}$ is zero. Let $h(n)=x\otimes t^n$ as an abbreviation, then we consider an $\hat{\mathfrak{h}}$-module, whose elements are of the form \[ h_k(-n_k) \dots h_1(-n_1)\bf{1}, \quad \text{for $n_1 \dots n_k\in\Z^+$.} \] 

The lattice-VOA $V_L$ is defined as a tensor product of this $\hat{\mathfrak{h}}$-module with the twisted group algebra and consists of elements of the form \[ h_k(-n_k) \dots h_1(-n_1)\textbf{1} \otimes  \mathfrak{e}_{\alpha}, \quad \text{for $n_1 \dots n_k \in \Z^{+}$ and $\mathfrak{e}_{\alpha}\in\C_{\epsilon}[L]$}. \] The weight of an element of this form is given by \[ n_1+ \dots + n_k+ {\langle \alpha, \alpha \rangle / 2}. \] 
One can equip this vector space with fields $Y$ and conformal vector $\omega$ and $V_L$ will be a vertex operator algebra with central charge equal to the rank of $L$.

\subsection{Automorphisms and fixed subVOAs} \label{sec:automorphismsVOAs} 

We now recall the definition of an automorphism of a VOA and discuss automorphisms of lattice-VOAs that arise as lifts of lattice automorphisms.
\begin{Definition}[VOA automorphism] For a VOA $V$, an automorphism of $V$ is a linear operator $h\colon V \to V$ such that $h\mathbf{1}=\mathbf{1}$, $h\omega=\omega$, and $h Y(v,z)h^{-1}=Y(hv,z)$ for all $v\in V$. We denote the group of automorphisms of $V$ by $\Aut V$. 
\end{Definition}

For an automorphism $h \in \Aut V$ of order $m$, we have that $V$ decomposes into eigenspaces \[ V=\bigoplus\limits_{k\in \mathbb{Z}/m\mathbb{Z}} V^k \] where $V^{k}=\{v\in V \mid hv=e^{2\pi i k/m}v\}$ for $0\leq k\leq m-1$. 
\begin{Definition}[Fixed subVOA] \label{def: fixedsubVOA} For a finite subgroup $H \subseteq \Aut V$ let $V^H$ be the set of $v\in V$ that are pointwise fixed under the action of $H$, i.e., \[V^H \coloneqq \{v\in V \mid hv=v \text{ for all } h\in H\}.\] By restricting the VOA structure from $V$ to $V^H$, $V^H$ has the structure of a vertex operator algebra.
\end{Definition}
 The main theorem of orbifold theory (see, for example  \cite{Mollerthesis}*{Theorem $4.1.5$} or
 \cites{CM16, Miyamoto, DM97}) asserts that if $V$ is strongly rational and $H$ is a finite solvable group of automorphisms of $V$, then the fixed-point VOA $V^H$ is strongly rational as well.

\subsection{Lifting lattice automorphisms} We consider automorphisms of lattice-VOAs that arise as lifts of automorphisms of the underlying lattice. An automorphism $g$ of an even positive definite lattice $L$ can be lifted to an automorphism $\hat{g}$ of the corresponding lattice-VOA $V_L$. Based on the lattice-VOA construction, we decompose the VOA automorphism to see its action on the twisted group algebra and on the Heisenberg algebra. We write $\hat{g}=g_{\mathfrak{h}} \otimes g_{\epsilon}$ where $g_{\mathfrak{h}}$ acts on the elements of the Heisenberg algebra  (denoted $h(-n)$ where $h\in \mathfrak{h}$ and $n\in \mathbb{Z}^+$) as $g(h(-n))=gh(-n)$. To define the action of $g$ on the elements of the twisted algebra, we require a function $u\colon L \to \{ \pm 1\}$ that is compatible with the $2$-cocycle $\epsilon$ defined above in the sense that, for $\alpha, \beta$ in $L$ we have
\[ \dfrac{\epsilon(\alpha, \beta)}{\epsilon(g\alpha, g\beta)}= \dfrac{u(\alpha) u(\beta)}{u(\alpha+ \beta)}.\] 

\noindent We can then define the action of $\hat{g}$ on $\mathbb{C}_{\epsilon}[L]$ as $\hat{g}(\mathfrak{e}_{\alpha})=u(\alpha) \mathfrak{e}_{g\alpha}$. 
The action of $g^j$ on the Heisenberg algebra is straightforward, but it takes some care to define the action of $\hat{g}$ on $\mathbb{C}_{\epsilon}[L]$. In particular, $\hat{g}^j(\mathfrak{e}_{\alpha})=u(\alpha)u(g\alpha)\dots u(g^{j-1} \alpha) \mathfrak{e}_{g^j\alpha}$. To simplify notation, we define
\begin{equation*}w_j(\alpha):=u(\alpha)u(g\alpha)\dots u(g^{j-1}\alpha).\end{equation*}

We will take $u$ to be a \defi{standard lift} of $g$, so that $u(\alpha)=1$ for each $\alpha \in L^g$. However, since $u(g^j\alpha)$ is not necessarily equal to $1$ if $j > 1$,  we use the following theorem to compute $w_j(\alpha)$.
\begin{Theorem}[\cite{Borcherds92} \cite{Mollerthesis}] Let $g$ be an automorphism of $L$ of order $m$ and $\hat{g}$ its lift to $V_L$, then for all $j\in\Z_{\geq 0}$
\[w_j(\alpha)= u\left(\sum\limits_{i=0}^{j-1} g^i\alpha\right)\cdot \begin{cases} 1 & \text{ if $m$ or $j$ is odd}, \\
(-1)^{\langle \alpha, g^{j/2}\alpha \rangle}  & \text{if $m$ and $j$ are even.}
\end{cases}\]
In particular, $\sum_{i=0}^{j-1} g^i\alpha$ is in $L^g$ and thus if $u$ is the standard lift, this factor is $1$. Hence $w_j$ defines a homomorphism $w_j \colon L^{g^j} \to \{\pm{1}\}$.
\end{Theorem}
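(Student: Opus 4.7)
The plan is to prove the formula by induction on $j$. Writing $S_j(\alpha) := \sum_{i=0}^{j-1} g^i\alpha$ and decomposing $w_j(\alpha) =: u(S_j(\alpha))\, c_j(\alpha)$, the cases $j=0,1$ are immediate. The inductive step applies the defining cocycle adjustment
\begin{equation*}
u(\gamma)u(\delta) = u(\gamma+\delta)\cdot \frac{\epsilon(\gamma,\delta)}{\epsilon(g\gamma, g\delta)}
\end{equation*}
with $\gamma = S_{j-1}(\alpha)$ and $\delta = g^{j-1}\alpha$ (so that $\gamma + \delta = S_j(\alpha)$), yielding the recursion
\begin{equation*}
c_j(\alpha) = c_{j-1}(\alpha)\cdot R_{j-1}(\alpha), \qquad R_k(\alpha) := \frac{\epsilon(S_k(\alpha), g^k\alpha)}{\epsilon(g S_k(\alpha), g^{k+1}\alpha)}.
\end{equation*}
The core task is then to show that $\prod_{k=1}^{j-1} R_k(\alpha)$ evaluates to the sign stated in the theorem.

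To handle this product, I would use the antisymmetry identity $\epsilon(\alpha,\beta)/\epsilon(\beta,\alpha) = (-1)^{\langle\alpha,\beta\rangle}$ together with the $g$-invariance of the inner product to rewrite the accumulated sign as a product of factors $(-1)^{\langle\alpha, g^t\alpha\rangle}$ indexed by shifts $t$. The constraint $g^m\alpha = \alpha$ allows reduction of shifts modulo $m$, and the evenness of $L$ eliminates the $t = 0$ contribution since $\langle\alpha,\alpha\rangle \in 2\Z$. A careful accounting of which shifts arise from the $R_k$'s shows that nontrivial contributions pair up under an involution on shifts and cancel except at fixed points of that involution. When $m$ or $j$ is odd, no nontrivial fixed point lies in the relevant range and the total sign is $1$; when both $m$ and $j$ are even, the shift $t = j/2$ is the unique surviving fixed point, producing exactly the factor $(-1)^{\langle\alpha, g^{j/2}\alpha\rangle}$ appearing in the theorem.

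For the concluding assertions, a direct computation gives
\begin{equation*}
g\, S_j(\alpha) = \sum_{i=1}^{j} g^i\alpha = S_j(\alpha) + g^j\alpha - \alpha,
\end{equation*}
so $\alpha \in L^{g^j}$ forces $S_j(\alpha) \in L^g$, and hence $u(S_j(\alpha)) = 1$ by the standard-lift convention. The homomorphism claim is then independent of the main formula: for $\alpha,\beta \in L^{g^j}$, iterated cocycle adjustment and telescoping give
\begin{equation*}
w_j(\alpha)w_j(\beta) = w_j(\alpha+\beta)\cdot \prod_{i=0}^{j-1} \frac{\epsilon(g^i\alpha, g^i\beta)}{\epsilon(g^{i+1}\alpha, g^{i+1}\beta)} = w_j(\alpha+\beta)\cdot \frac{\epsilon(\alpha,\beta)}{\epsilon(g^j\alpha, g^j\beta)} = w_j(\alpha+\beta),
\end{equation*}
using $g^j\alpha = \alpha$ and $g^j\beta = \beta$ in the last step. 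The main obstacle is the sign analysis of the second paragraph: producing the involution on shifts explicitly and verifying that in the even-even regime precisely the single fixed shift $t = j/2$ survives, while in all other cases every potential contribution cancels in pairs. The bookkeeping is delicate because $\epsilon$ is only antisymmetric rather than bilinear, so each rewriting step introduces auxiliary sign factors that must be tracked through the induction.
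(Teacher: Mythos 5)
The paper itself gives no proof of this statement---it is imported verbatim from Borcherds and M\"oller's thesis---so there is no internal argument to compare against; I am judging your proposal on its own. Your scaffolding is sound: the recursion $c_j(\alpha)=c_{j-1}(\alpha)R_{j-1}(\alpha)$ is exactly what unwinding the compatibility relation gives, the identity $gS_j(\alpha)=S_j(\alpha)+g^j\alpha-\alpha$ correctly yields the ``in particular'' clause, and the telescoping proof of the homomorphism property is complete and correct. The problem is that the entire content of the theorem sits in the step you defer, and the mechanism you sketch for it does not work as described. The antisymmetry identity $\epsilon(x,y)/\epsilon(y,x)=(-1)^{\langle x,y\rangle}$ cannot by itself turn $R_k(\alpha)=\epsilon(S_k(\alpha),g^k\alpha)/\epsilon(gS_k(\alpha),g^{k+1}\alpha)$ into commutator factors: the numerator and denominator are not swaps of one another's arguments, and $\epsilon$ is only a $2$-cocycle, not bimultiplicative, so you may not split $\epsilon(S_k(\alpha),g^k\alpha)$ as $\prod_i\epsilon(g^i\alpha,g^k\alpha)$. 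Moreover, your pairing is built on $g^m\alpha=\alpha$ and ``reduction mod $m$,'' whose involution $t\mapsto m-t$ has fixed point $m/2$, not the $j/2$ appearing in the theorem; the pairing that produces $j/2$ is $t\mapsto j-t$, which requires $g^j\alpha=\alpha$. This points to a hypothesis you never impose on the main formula: for $j<m$ the displayed identity need not hold for general $\alpha$, only for $\alpha\in L^{g^j}$ (consistent with the theorem's final sentence). Finally, when $j$ is even but $m$ is odd the fixed shift $j/2$ \emph{does} survive your involution, so your case split is not justified as stated; one must check separately that $\langle\alpha,g^{j/2}\alpha\rangle$ is then automatically even (because $g^{j/2}$ has odd order and fixes any $\alpha$ fixed by $g^j=(g^{j/2})^2$).

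The gap can be closed along your lines using associativity of the twisted group algebra rather than antisymmetry alone. Let $E_j(\alpha)$ be the structure constant with $\mathfrak{e}_{\alpha}\mathfrak{e}_{g\alpha}\cdots\mathfrak{e}_{g^{j-1}\alpha}=E_j(\alpha)\,\mathfrak{e}_{S_j(\alpha)}$, so that $E_j(\alpha)=\prod_{k=1}^{j-1}\epsilon(S_k(\alpha),g^k\alpha)$ and hence $\prod_{k=1}^{j-1}R_k(\alpha)=E_j(\alpha)/E_j(g\alpha)$. Comparing $\mathfrak{e}_{\alpha}\cdot\bigl(\mathfrak{e}_{g\alpha}\cdots\mathfrak{e}_{g^{j-1}\alpha}\bigr)$ with $\bigl(\mathfrak{e}_{g\alpha}\cdots\mathfrak{e}_{g^{j-1}\alpha}\bigr)\cdot\mathfrak{e}_{g^j\alpha}$ and using $g^j\alpha=\alpha$ gives $E_j(\alpha)/E_j(g\alpha)=\epsilon(\alpha,S_j(\alpha)-\alpha)/\epsilon(S_j(\alpha)-\alpha,\alpha)=(-1)^{\langle\alpha,S_j(\alpha)-\alpha\rangle}=(-1)^{\langle\alpha,S_j(\alpha)\rangle}$, the last step using that $L$ is even. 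Now $\langle\alpha,S_j(\alpha)\rangle=\sum_{i=0}^{j-1}\langle\alpha,g^i\alpha\rangle$, and for $\alpha\in L^{g^j}$ one has $\langle\alpha,g^i\alpha\rangle=\langle\alpha,g^{j-i}\alpha\rangle$, so the pairing $i\leftrightarrow j-i$ kills everything except the self-paired term $i=j/2$ when $j$ is even; together with the parity observation above for $m$ odd, this yields exactly the stated sign.
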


 In particular, if both the order $m$ of the automorphism $g$ and the power $j$ of $g^j$ are even, then $w_j(\alpha)$ need not equal $1$. If $g \in \Aut (L)$ has order $m$ and there is an $\alpha$ such that $w_j(\alpha) \ne 1$, then the lift $\hat{g} \in \Aut(V_L)$ has order $2m$.
\begin{Corollary}[Order doubling] \label{cor: orderdoubling} Let $\hat{g} \in \Aut(V_L)$ be a standard lift of an order $m$ automorphism $g \in \Aut(L)$. If $m$ is odd, then $\hat{g}$ has order $m$. If $m$ is even, then $\hat{g}$ has order $m$ if $\langle \alpha, g^{m/2}\alpha \rangle \in 2\Z$ for all $\alpha \in L$, and order $2m$ otherwise.
\end{Corollary}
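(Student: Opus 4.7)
The plan is to read off the order of $\hat g = g_{\mathfrak h}\otimes g_\epsilon$ by separately analyzing its action on the two tensor factors of $V_L$. Since $g_{\mathfrak h}$ acts on Heisenberg modes $h(-n)$ via $g$ itself, any power $\hat g^k$ that equals the identity must already satisfy $m\mid k$; so the order of $\hat g$ is a multiple of $m$. On $\C_\epsilon[L]$ we have $\hat g^j(\mathfrak e_\alpha) = w_j(\alpha)\,\mathfrak e_{g^j\alpha}$, so $\hat g^k = \mathrm{id}$ if and only if $g^k=\mathrm{id}$ and $w_k(\alpha)=1$ for every $\alpha\in L$. The strategy is therefore to plug the candidates $k=m$ and (in the even case) $k=2m$ into the $w_j$-formula from the preceding theorem and verify or refute that $w_k\equiv 1$.

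For $m$ odd, applying the theorem with $j=m$ lands in the first branch, leaving $w_m(\alpha)=u\bigl(\sum_{i=0}^{m-1}g^i\alpha\bigr)$. A one-line check shows this telescoping sum is $g$-invariant, so standardness of $u$ forces $w_m\equiv 1$. Hence $\hat g^m=\mathrm{id}$ and $\hat g$ has order $m$.

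For $m$ even and $j=m$, the second branch of the formula gives $w_m(\alpha)=(-1)^{\langle\alpha,\,g^{m/2}\alpha\rangle}$ (the $u$-factor again being $1$). If this exponent lies in $2\Z$ for every $\alpha\in L$, then $w_m\equiv 1$ and $\hat g$ has order $m$. Otherwise, pick $\alpha$ with $w_m(\alpha)=-1$, which forces $\hat g^m\neq\mathrm{id}$, so the order of $\hat g$ is a multiple of $m$ strictly greater than $m$ and must therefore be at least $2m$. To certify it is exactly $2m$, push the formula up to $j=2m$: since $g^m=\mathrm{id}$, the exponent collapses to $\langle\alpha,\alpha\rangle$, which is even because $L$ is even. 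Hence $w_{2m}\equiv 1$, $\hat g^{2m}=\mathrm{id}$, and the order is $2m$.

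There is no serious obstacle; the whole argument is a careful unpacking of the preceding theorem. The only point requiring attention is that the ``standardness kills the $u$-factor'' step presupposes $\sum_{i=0}^{j-1}g^i\alpha\in L^g$, which is where one uses that the chosen $j\in\{m,2m\}$ is a multiple of $m$ so that $g^j\alpha=\alpha$ and the telescoping sum is indeed $g$-invariant.
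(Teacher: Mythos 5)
Your proof is correct and follows exactly the route the paper intends: the corollary is an immediate unpacking of the displayed formula for $w_j$, checking $j=m$ in the odd and even branches and $j=2m$ (where the exponent collapses to $\langle\alpha,\alpha\rangle\in 2\Z$ since $L$ is even), with standardness of $u$ killing the $u$-factor because the telescoping sum is $g$-invariant when $m\mid j$. The paper gives no separate argument beyond the remark preceding the corollary, so your write-up is simply a careful expansion of the same reasoning.
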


\section{Characters of fixed subVOAs} \label{sec: charsfixedsubVOAS}
We now give the definition of the characters of the fixed subVOAs and an example of how to compute them.
For a lattice-VOA $V_L$, we take the fixed sublattice under a finite cyclic group of automorphisms $\langle \hat{g} \rangle$ of order $n$. By the classification of irreducible modules in \cite{Mollerthesis}, there are exactly $n^2$ irreducible $V^{\langle \hat{g} \rangle}$-modules,  namely \[ W^{(\ell,j)}=\{ w\in V(\hat{g}^\ell)\mid \phi_\ell(\hat{g})v=e^{2\pi \Im j/n}v \}\] 
where $\phi_{\ell}$ is a representation of $\langle  \hat{g} \rangle$ on the vector space $V_L(\hat{g}^{\ell})$ that is unique up to an $n$-th root of unity (see \cite{Mollerthesis}*{Proposition 4.2.3} or \cite{DLM}). Recall that $V=\oplus V_n$ is graded by the $L_0$ eigenvalues, so we define $\text{tr}(q^{L_0-c/24}|V) \coloneqq q^{-c/24} \sum_n \text{dim}{V_n}\,q^n$.

\begin{Definition}[Twisted trace functions] \label{twistedtracefunctions} Following \cites{Miyamoto, CM16, MT04}, let
\begin{equation} T(v,\ell ,j, \tau) \coloneqq \text{tr}(o(v)\phi_\ell(\hat{g}^j)q^{L_0-\frac{c}{24}}\mid V(\hat{g}^\ell)), \end{equation}
where for $v\in V_k$, we have $o(v):=v(k-1)$ which leaves each homogeneous space $V_k$ invariant and can be linearly extended to finite sums $u=\sum_k o(v_k)$.
\end{Definition}

\begin{Definition}
The \defi{character} of a vertex operator algebra $V=\bigoplus_n V_n$ of central charge $c$ is defined as 
\[ \Ch V \coloneqq \sum\limits_n \text{dim}(V_n)q^{n-c/24}.\]
\end{Definition}
 
To compute the character of $V_L$ fixed by $\langle \hat{g} \rangle$, we sum over the traces of $\hat{g}^j$ acting on $V_L$ for all $\hat{g}^j \in \langle \hat{g} \rangle$. In the particular case when $v=\mathbf{1}$, the twisted character for the action of $\hat{g}^j$ on the twisted module $V(\hat{g}^\ell)$ is
\begin{equation} \label{eqn: Tij} T(\ell,j,\tau)=T(\mathbf{1},\ell, j, \tau)= \text{tr}(\phi_\ell(\hat{g}^j)q^{L_0-\frac{c}{24}}\mid V(\hat{g}^\ell)). 
\end{equation}

Taking $\phi_0(\hat{g}):=\hat{g}$ for all $\hat{g}\in \Aut V_L$\footnote{Since $V_L$ is untwisted, this is a suitable choice for $\phi_0$ (see, for example Remark $4.2.2$ of \cite{Mollerthesis})}, we observe from \eqref{eqn: Tij} that \[T(0,j,\tau)=\text{tr}(\phi_0(\hat{g}^j)q^{L_0-\frac{c}{24}}\mid V_L)=\text{tr}(\hat{g}^jq^{L_0-\frac{c}{24}}\mid V_L),\] which is the trace of $\hat{g}^j$ acting on $V_L$. Note that in terms of the twisted modules above, we have $\text{Ch} V_L^{\langle \hat{g} \rangle} =\text{Ch}{W^{(0,0)}}(\tau)$. Thus
\begin{equation} \text{Ch} V_L^{\langle \hat{g} \rangle}=\frac{1}{n} \sum\limits_{j\in \mathbb{Z}/n\mathbb{Z}} T(0,j,\tau)
= \frac{1}{n} \sum\limits_{j\in \mathbb{Z}/n\mathbb{Z}}\text{tr}(\hat{g}^jq^{L_0-\frac{c}{24}}\mid V_{L})=\frac{1}{n} \sum\limits_{j\in \mathbb{Z}/n\mathbb{Z}}\frac{ \theta_{L^{g^j},w_j}(\tau)}{\eta_{g^j}(\tau)},
\end{equation}
with $\theta_{L^{g^j},w_j}(\tau)=\sum\limits_{\alpha \in L^{g^j}} w_j(\alpha)q^{{\langle \alpha, \alpha \rangle}/{2}}$ and $w_j(\alpha)=u(\alpha)u(g\alpha) \dots u(g^{j-1}\alpha)$, as before.

By Corollary \ref{cor: orderdoubling}, when $j$ or $n$ is odd, the theta series $\theta_{L^{g^j},w_j}(\tau)$ is the usual lattice theta series $\theta_{L^g}(\tau)$ of the sublattice fixed by $g^j$. When $j$ and $n$ are both even, the theta series $\theta_{L^{g^j},w_j}(\tau)$ is not necessarily equal to $\theta_{L^{g^j}}(\tau)$ since $w_j(\alpha)$ need not be $1$. In this case, since $w_j$ defines a homomorphism into $\{\pm{1}\}$, we can decompose $L^{g^j}$ as a direct sum 
\begin{equation}
     L^{g_j} = w_j^{-1}(\{1\}) \oplus w_j^{-1}(\{-1\}) = L_0^{g^j} \oplus w_j^{-1}(\{-1\}).
\end{equation}
We call $L_0^{g^j} \coloneqq w_j^{-1}(\{1\}) = \ker(w_j)$ the \defi{kernel for order doubling}. Its complement $w_j^{-1}(\{-1\})$ has theta series $\theta_{L^{g^j}}(\tau) - \theta_{L_0^{g^j}}(\tau)$. Separating the terms of $\theta_{L^{g^j},w_j}(\tau)$ according the value of $w_j(\alpha)$ in each term, we have $\theta_{L^{g^j},w_j}(\tau) = \theta_{L_0^{g^j}}(\tau)-\theta_{w_j^{-1}(\{-1\})}(\tau)$. In summary, we have

\begin{equation} \label{eqn: modifiedlatticetheta}
 \theta_{L^{g^j},w_j}(\tau)  =  \begin{cases}
    \theta_{L^{g^j}}(\tau), & \text{if one of $j$ or $n$ is odd} \\
    2\theta_{L_0^{g^j}}(\tau) - \theta_{L^{g^j}}(\tau), & \text{if both $j$ and $n$ are even}.\\
    \end{cases}
\end{equation}

\noindent In the case of $E_8$, we prove that the kernel for order doubling is the $D_8$ lattice.

\begin{Lemma} \label{lem: kerorddoubE8} Let $L = E_8$ and suppose $g \in \Aut(L)$ with even order $n$ whose lift $\hat{g}$ has order $2n$ in $\Aut V_L$. Then when $j = n$, we have $\theta_{L^{g^j},w_j}(\tau) = 2\theta_{D_8}(\tau) - \theta_{E_8}(\tau)$.
\end{Lemma}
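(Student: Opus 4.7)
The plan is to identify the kernel $L_0 \coloneqq \ker(w_n) \subset E_8$ as a sublattice with the same theta series as $D_8$, and then invoke the general decomposition \eqref{eqn: modifiedlatticetheta}.

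Since $g^n = \mathrm{id}$, we have $L^{g^n} = E_8$, and for $m = n$ and $j = n$ both even, the Borcherds--M\"oller formula for $w_j$ reduces (using $\sum_{i=0}^{n-1} g^i\alpha \in L^g$ and $u|_{L^g} = 1$ for the standard lift) to $w_n(\alpha) = (-1)^{\langle \alpha, h\alpha\rangle}$, where $h \coloneqq g^{n/2}$. Using $h^2 = \mathrm{id}$ and symmetry of $\langle\cdot,\cdot\rangle$, the cross term $\langle \alpha, h\beta\rangle + \langle \beta, h\alpha\rangle = 2\langle\alpha, h\beta\rangle$ is always even, so $w_n\colon E_8 \to \{\pm 1\}$ is a group homomorphism; the order doubling hypothesis forces $L_0$ to have index $2$ in $E_8$, and \eqref{eqn: modifiedlatticetheta} reduces the lemma to showing $\theta_{L_0} = \theta_{D_8}$.

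By self-duality of $E_8$, the $\mathbb{F}_2$-linear functional $q(\alpha) \coloneqq \langle\alpha, h\alpha\rangle \bmod 2$ on $E_8/2E_8$ is representable as $\langle\alpha, v\rangle \bmod 2$ for a unique $v \in E_8/2E_8$. The nontrivial $W(E_8)$-orbits on $E_8/2E_8$ have sizes $120$ (root classes with $\langle v, v\rangle \equiv 2 \pmod 4$, yielding $L_0 \cong E_7 \oplus A_1$) and $135$ (non-root classes with $\langle v, v\rangle \equiv 0 \pmod 4$, yielding $L_0 \cong D_8$). The main obstacle is to verify that $v$ lies in the $135$-orbit.

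In the code-lattice framework where $g = \iota(\gbar)$ for $\gbar \in \Aut\calH$, we have $h = \iota(\bar{h})$ for $\bar{h} = \gbar^{n/2}$, so $h$ permutes the basis $\{\alpha_i\}$ of $L'$. Evaluating $q(\alpha_i) = 2\delta_{i,\bar{h}(i)} \equiv 0 \pmod 2$ forces $v \in L' \bmod 2E_8$, so I write $v = \sum c_i\alpha_i$. Since $\Omega \in \calH$ (automatic for doubly even self-dual codes) and $\bar{h}(\Omega) = \Omega$, evaluating $q$ on $\tfrac{1}{2}\alpha_\Omega \in E_8$ gives
\[
    \sum_i c_i \equiv q\bigl(\tfrac{1}{2}\alpha_\Omega\bigr) = \tfrac{1}{4}\langle\alpha_\Omega, \alpha_\Omega\rangle = 4 \equiv 0 \pmod 2,
\]
so $\langle v, v\rangle = 2\sum c_i^2 \equiv 2\sum c_i \equiv 0 \pmod 4$. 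This places $v$ in the non-root orbit, giving $\theta_{L_0} = \theta_{D_8}$, and \eqref{eqn: modifiedlatticetheta} then yields the claimed identity.
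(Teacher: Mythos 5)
Your proof is correct, but it takes a genuinely different route from the paper's. The paper's argument is a short squeeze: it asserts that $\langle \alpha, g^{n/2}\alpha\rangle \in 2\Z$ for every $\alpha$ in the copy of $D_8$ inside $E_8$, so that $D_8 \subseteq \ker(w_n) \subsetneq E_8$, and then concludes $\ker(w_n) = D_8$ from the index-$2$ count. You instead classify all index-$2$ sublattices of $E_8$ at once: by unimodularity the homomorphism $w_n$ corresponds to a nonzero class $v \in E_8/2E_8$, the two $W(E_8)$-orbits of nonzero classes (sizes $120$ and $135$) are separated by $\langle v,v\rangle \bmod 4$ and yield kernels isometric to $E_7\oplus A_1$ and $D_8$ respectively, and you pin down the orbit by evaluating the quadratic form on the $\alpha_i$ and on $\tfrac12\alpha_\Omega$ in the code-lattice coordinates. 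Your route is longer, but it buys something real: the paper's key containment $D_8\subseteq\ker(w_n)$ is asserted without justification, and it is \emph{not} true for an arbitrary even-order $g\in\Aut(E_8)$ with order doubling --- for $g$ a reflection $s_r$ in a root one gets $\ker(w_2)=\{\alpha:\langle\alpha,r\rangle\in2\Z\}\cong E_7\oplus A_1$, whose theta series already differs from $\theta_{D_8}$ at $q^1$ ($128$ roots versus $112$). Your computation makes explicit exactly where the hypothesis that $g=\iota(\gbar)$ comes from a code automorphism enters (namely that $h=g^{n/2}$ permutes the $\alpha_i$ and fixes $\Omega$); this restriction is the standing convention of the surrounding section but is omitted from the lemma statement, and your write-up would be strengthened by stating it explicitly, since the lemma is false without it.
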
 

\begin{proof} The kernel $\ker(w_j)$ is always a full rank sublattice of $L^{g^j}$, the latter of which is $L$ itself in the case that $j = n$ since $g$ has order $n$. By assumption, $\hat{g}$ has order doubling, thus $\ker(w_j) \subsetneq E_8$. Moreover, since $D_8 = \{ \sum x_i e_i : \sum_i x_i \equiv 0 \bmod{2}\} \subset E_8$, the norm $\langle \alpha, g \alpha \rangle \in 2\Z$ for all $\alpha \in D_8$, hence $D_8 \subset \ker(w_j)$. Finally, since $D_8$ has index $2$ in $E_8$, we must therefore have $\ker(w_j) = D_8$ and thus the claim follows from (\ref{eqn:  modifiedlatticetheta}).
\end{proof}

\begin{Example} Consider an element $\gbar \in \Aut \calH$ of cycle type $1^22^14^1$ and let $g = \iota(\gbar)$.  By applying Corollary~\ref{cor: orderdoubling}, we can determine that the lift of $g$ to $\hat{g} \in \Aut V_{E_8}$ has order doubling, i.e., $\text{ord}(\hat{g}) = 8$. For odd $j$, since the fixed sublattices $L^g$ and $L^{g^3}$ are the same, we find 
\[ T(0,j, \tau)=q^{-1/3}(1 + 16q + 64q^2 + 192q^3+ 510q^{4} + 1216q^{5} + 2688q^{6} + \cdots).\]
We handle even powers of $g$ individually. To compute $T(0,0, \tau) = T(0, 8,\tau)$, 
since $g^8$ is the identity, this gives $\theta_{L^{g^8}}(\tau)= \theta_{L^{\text{id}}}(\tau)=\theta_{E_8}(\tau)$. Thus we have
\[ T(0,0,\tau) =q^{-1/3}(1 +248q + 4124q^2 +34752q^3 +213126q^{4} + 1057504q^{5} + 4530744q^{6} + \cdots). \]
Since $g^2$ has cycle type $1^4 2^2$, the fixed sublattice $L^{g^2}$ has rank $6$ and has theta series
 \[\theta_{L^{g^2}}(\tau)= 1 + 60q + 252q^2 + 544q^3 + 1020q^4 + 1560q^5 + 2080q^6 + 3264q^7 + 4092q^8 +\cdots .\] 
The kernel of $w_2$ is a full rank sublattice of $L^{g^2}$, 
 and the theta series of $L_0^{g^2} = \ker(w_2)$ is 
 \[\theta_{L_0^{g^2}}(\tau) = 1 + 28q + 124q^2 + 288q^3 + 508q^4 + 728q^5 + 1056q^6 + 1728q^7 +2044q^8 + \cdots.\]
We then use \eqref{eqn:  modifiedlatticetheta} to compute the theta series. Following the same process for $j = 4$ and $j =6 $, we find
\begin{align*}
 T(0,2,\tau)&=q^{-1/3}(1 -4q^2 + 6q^4 -8q^6 +17q^8 - 28q^{10} + \cdots), \\
T(0,4,\tau)&=q^{-1/3}(1 -8q + 28q^2 -64q^3 +134q^{4} - 288q^{5} + 568q^{6} + \cdots),\mbox{ and} \\
T(0,6,\tau)&=q^{-1/3}(1 -4q^2 + 6q^4 -8q^6 +17q^8 - 28q^{10} + \cdots).\end{align*}
Finally, by combining the previous calculations, we have 
\[\text{Ch} V_L^{\langle \hat{g} \rangle}= q^{-1/3}(1 + 38q + 550q^2 + 4432q^3 + 26914q^4 + 132760q^5 + 567756q^6 + \cdots ). \]
\end{Example}

\section{Replicable functions associated to code-lattices} \label{sec: repfunctionscodelats}
In this section, we prove results about the modular functions associated to sublattices of even unimodular code-lattices fixed by subgroups of automorphisms of the code. In particular, motivated by Monstrous Moonshine, we consider the question of when these modular functions are replicable functions, and recover classical results regarding the functions that arise as theta quotients associated to sublattices of the $E_8$ lattice fixed by a single automorphism. More generally, we show that when one considers sublattices fixed by \emph{subgroups} of automorphisms, it is possible to recover additional replicable lattice theta quotients. We make this explicit in Section~\ref{subsub: leechnewreplicable} for the Leech lattice. Furthermore, we characterize those replicable theta quotients associated to orbit types that can exist for code-lattices of arbitrary rank, and provide data for lattices of rank at most $24$.

In Figures \ref{fig: repetasE8} -- \ref{fig: replicablenoncyclicLeech}, capital letters indicate monstrous functions for which ATLAS notation is used as in \cite{CNMoonshine}, while lowercase letters indicate non-monstrous replicable functions.

\subsection{Lattices fixed by an automorphism} 
The relationship between the monstrous moonshine functions and graded trace functions of automorphisms acting on lattice-VOAs (although not stated this way) has been investigated for individual lattice automorphisms $g$. In particular, when $L$ is the Leech lattice, Conway and Norton \cite{CNMoonshine} speculated that for any $g\in \Aut L$, the lattice theta quotient $\theta_{L^g}(\tau)/\eta_g(\tau)$ is a monstrous moonshine function. This is in fact not the case; there are 15 conjugacy classes of elements of $\Aut L$ such that $\theta_{L^g}(\tau)/\eta_g(\tau)$ are not hauptmoduln \cite{L89}. However, Kondo and Tasaka considered only automorphisms of $L$ which are in the image of an automorphism $\Aut \mathcal{G}$ under a natural embedding \cite{KT86}. In this case they show that all of the lattice theta quotients $\theta_{L^g}(\tau)/\eta_g(\tau)$ are monstrous moonshine functions. This leads naturally to the following question.

\begin{Question} Let $C$ be a doubly-even self dual code and let $L$ be the corresponding code-lattice. If one restricts to the trace functions $\theta_{L^g}(\tau)/\eta_g(\tau)$ for $g \in \Aut L$ in the image of $\bar{g}\in \Aut C$, are these all replicable functions? If not, to what extent does this fail?
\end{Question}

In the case when $L$ is the $E_8$ lattice, the automorphisms of $L$ that give hauptmoduln are classified in \cite{CLL92}*{Main Theorem}.  In analogy with \cite{KT86}, we investigate the lattice theta quotients $(\theta_{L^g}(\tau)/\eta_g(\tau))^3$ when $g$ is an automorphism of $E_8$ in the image of $\Aut \mathcal{H}$ under the embedding $\iota$, defined in equation \eqref{eqn: iota}. 
In particular, we find they are equal to monstrous moonshine functions or non-monstrous replicable functions for certain conjugacy classes. As in Theorem~\ref{thm: mainintro}, we take the cube to ensure that the theta quotients have desired transformation properties and are in fact modular functions. We summarize our findings below. \medskip

\begin{center}
\def\arraystretch{1.1}
\begin{tabular}{|l| l| l|}
\hline
Cycle Type of $\overline{g}$ & Representative & $(\theta_{L^g}(q)/\eta_g(q))^3$ \\
\hline
$1^8$ & $\text{Id}(G)$ & $T_{1A}(q)$ \\
\hline
$1^2 \, 3^2$ & $(1, 5, 2)(3, 7, 8)$& $T_{3A}(q)$  \\
\hline
$2^4$ & $(1, 7)(2, 4)(3, 8)(5, 6)$ & $T_{4A}(q)$  \\
\hline
$1^1 \, 7^1$ & $(1, 3, 7, 8, 5, 4, 2)$& $T_{7A}(q)$   \\
\hline
$4^2$ & $(1, 3, 7, 8)(2, 5, 4, 6)$ & $T_{8B}(q)$  \\
\hline
$2^1 \, 6^1$ & $(1, 3, 7, 8, 2, 6)(4, 5)$ & $T_{6b}(q)$  \\
\hline
\end{tabular}
\captionof{figure}{List of cycle types of automorphisms of $\calH$ that produce replicable lattice theta quotients for $E_8$}
\label{fig: repetasE8}\end{center}

\begin{Proposition} \label{prop: replicableE8individual}
Let $\bar{g}$ be an automorphism of the Hamming code $\mathcal{H}$. Then $(\theta_{L^g}(z)/\eta_g(z))^3$ is a replicable function if and only $\bar{g}$ is in one of the 6 conjugacy classes listed in Figure~\ref{fig: repetasE8}.
\end{Proposition}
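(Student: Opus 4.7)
The plan is to proceed by direct computation over conjugacy classes, exploiting the explicit fixed-sublattice description in equation~\eqref{fixedsublattice} together with the Jacobi-theta decomposition used in Example~\ref{ex:g_invariants}. First I would enumerate the conjugacy classes of $\Aut\calH$, which has order $1344$. Since $\Aut\calH$ is a permutation group on $8$ letters, each class is labeled by a cycle type (with possibly more than one class per cycle type), and a representative from each can be chosen; one verifies that there are finitely many classes, covering the cycle types $1^8$, $1^4 2^2$, $1^2 2^3$, $2^4$, $1^2 3^2$, $1\cdot 3\cdot 4$, $4^2$, $1\cdot 7$, $2\cdot 6$, and any others that actually occur in $\Aut\calH$.

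Next, for each representative $\bar{g}$, I would compute the cycle type and hence the eta product $\eta_g(q)=\prod_t\eta(q^t)^{r_t}$, and then compute the theta series $\theta_{L^g}(q)$ of the sublattice of $E_8$ fixed by $g=\iota(\bar{g})$. This uses the algorithm of Section~\ref{subsec: thetaseries}: determine the fixed codewords $B\in\calH^{\bar{g}}$, decompose each summand in \eqref{fixedsublattice} as an orthogonal union of translated one-dimensional lattices indexed by the cycles of $\bar g$ inside or outside $B$, and multiply the corresponding $\vartheta_2$ and $\vartheta_3$ factors. Summing gives $\theta_{L^g}(q)$ to as many terms as needed, and I cube the quotient to obtain $(\theta_{L^g}(q)/\eta_g(q))^3$ as a $q$-series beginning with $q^{-1}$.

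With $q$-expansions in hand, the forward direction of the proposition (that the six listed classes yield replicable functions) is verified by matching the computed $q$-expansion with the known expansions of $T_{1A}$, $T_{3A}$, $T_{4A}$, $T_{7A}$, $T_{8B}$ (from \cite{CNMoonshine}) and of the non-monstrous replicable function $T_{6b}$ (from the tables of replicable functions, e.g.~\cite{ACMS92}); since two replicable functions agreeing to sufficient order must coincide, this identification is rigorous once enough coefficients match. For the reverse direction, I would need to show the remaining cycle types do not yield replicable functions. The cleanest way is to invoke Norton's result, recalled after Definition~\ref{def:ReplicableFunction}, that a replicable function with rational coefficients is either a hauptmodul or one of the two modular fictions $q^{-1}\pm q$; then I rule out membership in the (finite) tabulated list of replicable $q$-series of the given leading order by comparing low-order coefficients.

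The main obstacle is this ruling-out step: confirming that each non-listed quotient genuinely fails replicability, not merely that we do not recognize it. A more self-contained alternative, avoiding reliance on a complete replicable-function list, is to check the Faber-polynomial identity in Definition~\ref{def:ReplicableFunction} directly: using the recurrence for $F^f_k(z)$ recalled in the text, compute $a_{n,k}$ for small $(n,k)$ with $\gcd(n,k)=\gcd(r,s)$ and $\lcm(n,k)=\lcm(r,s)$ and exhibit an explicit pair of indices at which $a_{n,k}\ne a_{r,s}$. For example, comparing $(n,k)=(1,2)$ with $(r,s)$ of the same gcd and lcm typically suffices, and this computation is mechanical once the $q$-expansion of the quotient is known. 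Either route reduces the proposition to a finite, verifiable table computation, which matches the summary given in Figure~\ref{fig: repetasE8}.
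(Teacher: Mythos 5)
Your overall strategy---reduce to a finite check over the $11$ conjugacy classes of $\Aut\calH$, compute each $\theta_{L^g}$ from the fixed codewords via the $\vartheta_2$/$\vartheta_3$ decomposition, and then test replicability of the cube of the quotient---is the same as the paper's. However, two steps in your justification do not close as written.

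First, the forward direction has a circularity. You identify each computed quotient with $T_{1A}, T_{3A}, T_{4A}, T_{7A}, T_{8B}, T_{6b}$ by matching finitely many coefficients and then appeal to the fact that two replicable functions agreeing to sufficient order coincide. That premise requires already knowing that your computed theta quotient is replicable, which is exactly what is to be proved. To make the identification rigorous you need either an exact closed-form identity or a modularity argument (both sides are weakly holomorphic modular functions for a common $\Gamma_0(M)$ with controlled pole orders at all cusps, so a valence/Sturm-type bound applies); coefficient matching alone proves nothing. The paper takes the first route: Proposition~\ref{prop: replicable_eta_quos} establishes exact eta-quotient identities for each pairing of orbit type with fixed-sublattice isomorphism class (e.g.\ orbit type $2^{N/2}$ with $L^G\cong A_1(2)^{N/2}$ forces the quotient to equal $T_{4A}$ identically), and the proof of the present proposition then only needs to compute the fixed sublattices and invoke that result.

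Second, in the reverse direction your preferred route invokes Norton's statement that a replicable function with rational coefficients is a hauptmodul or a modular fiction. As the paper itself notes, this is a \emph{conjecture}, not a theorem, so it cannot carry the burden of proof. Your fallback---use the Faber-polynomial recurrence to compute the $a_{n,k}$ and exhibit an explicit pair of index pairs with equal $\gcd$ and $\lcm$ but unequal coefficients---is exactly what the paper does for the four non-replicable classes (the second $2^4$ class, $1^4 2^2$, the second $4^2$ class, and $1^2 2^1 4^1$), and it is the right way to finish: disproving replicability by a single violated identity is a finite, conclusive computation, whereas ``not appearing in a table'' is not. With the forward direction repaired as above and the reverse direction run through the Faber-polynomial check, your argument matches the paper's.
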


\begin{proof}
We first note that one could obtain the result from \cite{CLL92}*{Main Theorem} by restricting to only those elements of $\Aut E_8$ in the image of $\Aut \calH$ under the natural embedding. We proceed by considering all $11$ conjugacy classes of elements $\overline{g}$ in $\Aut \calH$. Using Definition~\ref{def:ReplicableFunction}, one can verify that the the associated theta quotients $\left(\theta_{L^g}(z)/\eta_g(z)\right)^3$ are not replicable for the following 
conjugacy classes: $2^4$ with representative $(1, 2)(3, 8)(4, 7)(5, 6)$, $1^4 \, 2^2$ with representative $(1, 6)(7, 8)$, $4^2$ with representative $(1, 5, 2, 6)(3, 7, 8, 4)$, and $1^2 \, 2^1 \, 4^1$ with representative $(1, 7, 8, 6)(4, 5)$. For the remaining conjugacy classes, one can compute the corresponding fixed sublattices and apply Proposition~\ref{prop: replicable_eta_quos} to recover the replicable functions in Figure~\ref{fig: repetasE8}. 
\end{proof}

\begin{remark}
The cycle types in $\Aut \calH$ whose lattice theta quotients are not replicable functions, namely $\{2^4, 1^42^2, 4^2, 1^22^14^1 \}$, coincide with the cycle types whose lifts to $V_{E_8}$ have order doubling. Note, however that while the cycle types are the same, the conjugacy classes are not necessarily the same. 
\end{remark}

\subsection{Lattices fixed by subgroups of automorphisms}

Recall that in Definition~\ref{def: orbittype} we extended the notion of cycle type of an element $g \in \Aut L$ to an orbit type of a subgroup $G \subset \Aut L$, thereby allowing us to define an eta product $\eta_G(q)$ associated to a subgroup $G$.

\begin{Question} \label{question: subgroups}
    If one considers lattice theta quotients for lattices fixed by \emph{subgroups} of $\Aut C$ rather than individual elements of $\Aut C$, which, if any, of these functions are replicable? Will any additional replicable functions arise that were not realized as theta quotients from individual elements?
\end{Question}

\noindent We first complete this analysis for subgroups of $\Aut \mathcal{H}$.

\begin{center}
\def\arraystretch{1.1}
\begin{tabular}{|c| l| c|}
\hline
Orbit Type & Generators of representative subgroup & $(\Theta_{L^G}(q)/\eta_G(q))^3$ \\
\hline
$1^8$ & \text{id} & $T_{1A}(q)$ \\
\hline

\multirow{2}{*}{$1^2\, 3^2$} & $(1, 5, 2)(3, 7, 8)$&  \multirow{2}{*}{$T_{3A}(q)$}  \\ \cline{2-2}
 & $(1, 4, 3)(5, 8, 7),
(1, 3)(5, 7)$ &  \\
\hline
\multirow{2}{*}{$2^4$} & $(1, 7)(2, 4)(3, 8)(5, 6)$ &  \multirow{2}{*}{$T_{4A}(q)$}  \\
\cline{2-2}
& $(1, 5)(2, 6),
(3, 8)(4, 7)$ & \\
\hline

\multirow{3}{*}{$1^1 \, 7^1$} & $(1, 3, 7, 8, 5, 4, 2)$&  \multirow{3}{*}{$T_{7A}(q)$}   \\
\cline{2-2}
 & $(1, 4, 3)(5, 8, 7),
(1, 7, 3, 4, 6, 5, 8)$ & \\
\cline{2-2}
& $(1, 4)(3, 6),
(1, 3, 8, 2)(4, 5)$ & \\
\hline
\multirow{5}{*}{$4^2$} & $(1, 3, 7, 8)(2, 5, 4, 6)$ & \multirow{5}{*}{$T_{8B}(q)$}   \\
\cline{2-2}
& $(2, 5)(3, 4),
(1, 8)(2, 5)(3, 4)(6, 7),
(1, 5)(2, 8)(3, 7)(4, 6)$ & \\
\cline{2-2}
 & $(1, 2)(3, 7)(4, 8)(5, 6),
(1, 7)(2, 3)(4, 5)(6, 8)$ & \\
\cline{2-2}
& $(1, 2)(3, 7)(4, 8)(5, 6),
(1, 7)(2, 3)(4, 5)(6, 8),
(1, 2, 3)(4, 6, 5)$ & \\
\cline{2-2}
& 
\begin{tabular}{l@{}}
$(1, 2, 5)(3, 7, 4),
(2, 5)(3, 4), (1, 8)(2, 5)(3, 4)(6, 7),$\\
$
(1, 2)(3, 6)(4, 7)(5, 8)$
\end{tabular}
 & \\

\hline
\multirow{6}{*}{$2^1 \, 6^1$} & $(1, 3, 7, 8, 2, 6)(4, 5)$ & \multirow{6}{*}{$T_{6b}(q)$}  \\
\cline{2-2}
 & $(1, 2)(3, 7)(4, 8)(5, 6),
(1, 6, 8)(2, 4, 5)$ & \\
\cline{2-2}
& $(4, 6)(5, 7),
(2, 7, 5)(3, 6, 4),
(1, 8)(2, 3)(4, 5)(6, 7)$ & \\
\cline{2-2}
& $(1, 2, 5)(3, 7, 4),
(2, 4)(3, 5),
(1, 7)(2, 4)(3, 5)(6, 8),
(1, 7)(2, 4)$ & \\
\cline{2-2}
& $(1, 2, 5)(3, 7, 4),
(2, 4)(3, 5),
(1, 7)(2, 3)(4, 5)(6, 8),
(1, 7)(2, 4)$ & \\
\cline{2-2}
& 
\begin{tabular}{l@{}}
$(2, 4)(3, 5),
(1, 7)(2, 4)(3, 5)(6, 8),
(2, 5)(3, 4),
(1, 7)(2, 4),$\\
$(1, 5, 2)(3, 4, 7)$
\end{tabular}
 & \\
\hline 
\end{tabular}
\captionof{figure}{The replicable lattice theta quotients associated to subgroups of automorphisms of the Hamming code $\calH$ embedded as automorphisms of $E_8$.}
\label{fig: repfcnsforE8}\end{center}

\begin{Proposition} \label{prop: subgroupsAutHamming}
Let $\overline{G}$ be a subgroup of automorphisms of the Hamming code $\mathcal{H}$ whose image is $G := \iota(\overline{G})$ in the automorphism group of $L \coloneqq E_8$. Then $(\theta_{L^G}(z)/\eta_G(z))^3$ is a replicable function if and only if the subgroup $\overline{G}$ is isomorphic to one of the 19 conjugacy classes of subgroups in Figure~\ref{fig: repfcnsforE8}.
\end{Proposition}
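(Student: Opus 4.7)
The strategy is to carry out an exhaustive case analysis on conjugacy classes of subgroups $\overline{G} \subseteq \Aut \mathcal{H}$, extending the cyclic analysis of Proposition~\ref{prop: replicableE8individual}. Since $|\Aut \mathcal{H}| = 1344$, there are only finitely many subgroup conjugacy classes, which can be enumerated using a computer algebra system such as \texttt{Magma} or \texttt{GAP}. For each representative $\overline{G}$, I would compute the orbit partition of $\overline{G}$ on $\{1,\dots,8\}$, the fixed subcode $C^{\overline{G}}$ via Lemma~\ref{lem: orbitdecomp}, the fixed sublattice $L^G$ via equation~\eqref{eqn: fixedsublatbyG}, and the theta series $\theta_{L^G}(q)$ expressed as a sum of products of Jacobi theta functions as in Example~\ref{ex:g_invariants}. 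The candidate modular function $\left(\theta_{L^G}(q)/\eta_G(q)\right)^3$ is then tested for replicability by computing its Faber polynomial coefficients $a_{n,k}$ via the recurrence preceding Definition~\ref{def:ReplicableFunction} and verifying the symmetry $a_{n,k}=a_{r,s}$ whenever $\gcd(n,k)=\gcd(r,s)$ and $\lcm(n,k)=\lcm(r,s)$, to sufficient precision.

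A useful structural reduction is the following: by Lemma~\ref{lem: orbitdecomp}, $C^{\overline{G}}$ depends only on the partition of $\{1,\dots,8\}$ into $\overline{G}$-orbits, not on the finer group structure. Consequently $L^G$, and hence the theta quotient, is determined entirely by the orbit partition, which explains why several distinct conjugacy classes collapse onto the same theta quotient in Figure~\ref{fig: repfcnsforE8}. The observation that the orbit types appearing in Figure~\ref{fig: repfcnsforE8} are exactly those in Figure~\ref{fig: repetasE8} is then explained by Proposition~\ref{prop: replicable_eta_quos}: replicability is controlled by the pairing of orbit type with the isomorphism class of the fixed sublattice, and every such replicable pairing realizable inside $\Aut \mathcal{H}$ is already attained by a cyclic subgroup. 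The positive direction thus amounts to verifying, for each of the 19 listed representatives, that its orbit partition matches a replicable pairing from Proposition~\ref{prop: replicable_eta_quos}, identifying the corresponding McKay--Thompson or non-monstrous replicable function in the third column.

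The bulk of the work is the negative direction: for every conjugacy class of subgroup not appearing in Figure~\ref{fig: repfcnsforE8}, one must exhibit a violation of the Faber-polynomial symmetry. As already observed in Proposition~\ref{prop: replicableE8individual}, within a single orbit type (such as $2^4$ or $4^2$) different orbit partitions can produce either replicable or non-replicable theta quotients, so the check must be carried out partition by partition rather than at the level of orbit type alone. In practice these violations occur at very low order, so the infinite family of identities in Definition~\ref{def:ReplicableFunction} is pinned down by a finite and quite small computation for each candidate.

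The main obstacle is primarily organizational: one must enumerate all conjugacy classes of subgroups of $\Aut \mathcal{H}$ (not merely of elements, as in Proposition~\ref{prop: replicableE8individual}), track which conjugacy classes share an orbit partition and hence a theta quotient, and verify the negative direction uniformly across the remaining classes. The subtle point is that two non-conjugate subgroups can share an orbit partition and therefore yield the same theta quotient, which is the source of the apparent multiplicity of entries per orbit type in Figure~\ref{fig: repfcnsforE8}. With a computer algebra system handling the bookkeeping, the proof reduces to a tabulation of data together with an appeal to Proposition~\ref{prop: replicable_eta_quos}.
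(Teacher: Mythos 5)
Your proposal is correct and follows essentially the same route as the paper: an exhaustive computation over all (95) conjugacy classes of subgroups of $\Aut\mathcal{H}$, invoking Proposition~\ref{prop: replicable_eta_quos} to identify the replicable theta quotients in the listed cases and exhibiting a failure of the symmetry condition in Definition~\ref{def:ReplicableFunction} for the rest. Your added observation that the theta quotient depends only on the orbit partition (via Lemma~\ref{lem: orbitdecomp} and equation~\eqref{eqn: fixedsublatbyG}) is a correct and useful refinement that the paper leaves implicit.
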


\begin{proof}
The proof proceeds in the same manner as Proposition~\ref{prop: replicableE8individual} except we consider all 95 conjugacy classes of subgroups of $\Aut \calH$. In particular, for each subgroup listed above we compute the fixed sublattice and apply Proposition~\ref{prop: replicable_eta_quos}, otherwise we determine the theta quotient is not replicable via Definition~\ref{def:ReplicableFunction}.
\end{proof}

\subsection{Comparisons of theta quotients across lattices of varying rank}\label{subsection:varyingrank}
We now gather results regarding the replicable functions that frequently appear as lattice theta quotients for even unimodular code-lattices $L$ as the rank $N$ varies. Since $N \equiv 0 \bmod{8}$, some of the orbit types that can appear for any such $N$ are $1^N$, $2^{N/2}$, $4^{N/4}$, $1^{N/4} \, 3^{N/4}$, $2^{N/8} \, 6^{N/8}$, $1^{N/8} \, 7^{N/8}$, and $8^{N/8}$, since these correspond to the partitions of $8$ where each distinct part appears the same number of times. 

The orbit type of a subgroup $G \subset \Aut(L)$ is not enough to characterize the fixed sublattice and hence the corresponding lattice theta quotient. Therefore, we state results about pairs of an isomorphism class of fixed sublattice together with an orbit type listed above which must give rise to replicable theta quotients, with a focus on those pairs that arise for the $E_8$ lattice and appear in higher rank lattices. Moreover, in Example \ref{ex: Leechnoncyc} and Figure \ref{fig: replicablenoncyclicLeech}, we recover additional replicable theta quotients associated to the Leech lattice, and in particular one from a non-cyclic subgroup with orbit type $2^3 \, 6^3$. 

\begin{Proposition} \label{prop: replicable_eta_quos} Let $G$ be a subgroup of $\Aut(L)$ and $L^G$ be the fixed sublattice.
\begin{enumerate}[itemsep = .4em]
    \item \label{part: 4A} If $G$ has orbit type $2^{N/2}$ with $L^G \cong A_1(2)^{N/2}$, then $(\theta_{L^G}(q)/\eta_{G}(q))^{24/N}= T_{4A}(q)$.
   \item \label{part: 8B}  If $G$ has orbit type $4^{N/4}$ with $L^G \cong A_1(4)^{ N/4}$, then $(\theta_{L^G}(q)/\eta_{G}(q))^{24/N}= T_{8B}(q)$.
   \item \label{part: 16a} If $G$ has orbit type $8^{N/8}$ with $L^G \cong A_1(8)^{N/8}$, then $(\theta_{L^G}(q)/\eta_{G}(q))^{24/N} = T_{16a}(q)$.
   \item \label{part: 3A} If $G$ has orbit type $1^{N/4} \, 3^{N/4}$ with $L^G \cong A_2^{N/4}$, then  $(\theta_{L^G}(q)/\eta_{G}(q))^{24/N} = T_{3A}(q)$.
   \item \label{part: 6b} If $G$ has orbit type $2^{N/8} \, 6^{N/8}$ with $L^G \cong A_2(2)^{ N/8}$, then $(\theta_{L^G}(q)/\eta_{G}(q))^{24/N} = T_{6b}(q)$.
  \item \label{part: 12A} If $G$ has orbit type $2^{N/8} \, 6^{N/8}$ with $\theta_{L^G}(q)= \left(\theta_{A_1}(q^2) \, \theta_{A_1}(q^6)\right)^{N/8}$, then the lattice theta quotient $(\theta_{L^G}(q)/\eta_{G}(q))^{24/N} = T_{12A}(q)$.
   \item \label{part: 7A} If $G$ has orbit type $1^{N/8} \, 7^{N/8}$ with $L^G \cong K$, the Kleinian lattice with Gram matrix $\left( \begin{smallmatrix} 2 & 1 \\ 1 & 4 \end{smallmatrix} \right)$, then  $(\theta_{L^G}(q)/\eta_{G}(q))^{24/N}=T_{7A}(q)$.
\end{enumerate}
\end{Proposition}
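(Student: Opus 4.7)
The seven parts each claim that an explicit lattice theta quotient equals a named replicable function. My plan is to handle them uniformly via a template: since the orbit type of $G$ pins down $\eta_G(q)$ exactly, and the assumed isomorphism class of $L^G$ (or in part~\eqref{part: 12A}, its theta function directly) pins down $\theta_{L^G}(q)$ exactly, the quotient $(\theta_{L^G}(q)/\eta_G(q))^{24/N}$ is already a fully determined $q$-series. The main structural observation is that the exponent $24/N$ is precisely the normalization that kills the $N/r$ (or $N/8$) multiplicities coming from the ``direct sum'' structure of $L^G$, leaving an $N$-independent expression depending only on a single orbit-block.

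\textbf{Step 1.} For each part, read off $\eta_G(q) = \prod_t \eta(q^t)^{r_t}$ from Definition~\ref{def: orbittype}. For example, in part~\eqref{part: 4A}, $\eta_G(q) = \eta(q^2)^{N/2}$; in part~\eqref{part: 3A}, $\eta_G(q) = \eta(q)^{N/4}\eta(q^3)^{N/4}$; in part~\eqref{part: 7A}, $\eta_G(q) = \eta(q)^{N/8}\eta(q^7)^{N/8}$.

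\textbf{Step 2.} Compute $\theta_{L^G}(q)$ using the decomposition prescribed by the assumed isomorphism class. For parts~\eqref{part: 4A}--\eqref{part: 16a}, orthogonality gives $\theta_{L^G}(q) = \theta_{A_1(r)}(q)^{N/r} = \vartheta_3(q^{2r})^{N/r}$. For parts~\eqref{part: 3A} and \eqref{part: 6b}, use $\theta_{L^G}(q) = \theta_{A_2(s)}(q)^{N/(4s)}$ where $\theta_{A_2}(q) = \sum_{m,n\in\Z} q^{m^2+mn+n^2}$. Part~\eqref{part: 12A} supplies $\theta_{L^G}(q)$ directly. For part~\eqref{part: 7A}, use the Kleinian theta series $\theta_K(q)=\sum_{m,n\in\Z} q^{m^2+mn+2n^2}$.

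\textbf{Step 3.} Form the quotient and raise to the $24/N$ power. In every case the $N/r$ or $N/8$ exponent drops out, yielding an $N$-independent expression. Convert the surviving Jacobi theta factor into an eta quotient via standard identities such as
\begin{equation*}
\vartheta_3(q^{2r}) = \frac{\eta(q^{2r})^5}{\eta(q^r)^2\,\eta(q^{4r})^2},
\end{equation*}
together with the classical level-$3$ identity expressing $\theta_{A_2}(q)/(\eta(q)\eta(q^3))$ as a Hauptmodul-type expression, and its scaled analogue for $A_2(2)$ at level $6$. For part~\eqref{part: 7A}, the ratio $\theta_K(q)/(\eta(q)\eta(q^7))$ is the classical level-$7$ modular function that generates the function field of $X_0(7)$.

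\textbf{Step 4.} Match the resulting eta quotient (or eta/theta expression) with the published form of the named replicable function. The monstrous entries $T_{1A}, T_{3A}, T_{4A}, T_{7A}, T_{8B}, T_{12A}$ appear with explicit eta quotient representations in \cite{CNMoonshine}, while the non-monstrous replicable functions $T_{6b}$ and $T_{16a}$ are tabulated in \cite{ACMS92}. In each of the seven parts, the expression produced in Step~3 matches one entry exactly. The main obstacle is the careful book-keeping in this final step: aligning normalizations (the leading $q^{-1}$ of a Hauptmodul, the $24/N$ exponent, and sign/scaling conventions between $\vartheta$ and $\eta$) so that the identifications can be read off cleanly. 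The computations themselves are routine manipulations of Jacobi--eta identities; the subtlety is purely bibliographic/normalizational.
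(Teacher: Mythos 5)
Your proposal follows essentially the same route as the paper's proof: read off $\eta_G$ from the orbit type, write $\theta_{L^G}$ as a power of a single orbit-block theta series so that the exponent $24/N$ removes the $N$-dependence, convert to eta quotients via the same Jacobi--eta identities (the paper additionally shortcuts parts \eqref{part: 8B}, \eqref{part: 16a}, \eqref{part: 6b} by writing them as $T_{4A}(q^2)^{1/2}$, $T_{4A}(q^4)^{1/4}$, $T_{3A}(q^2)^{1/2}$), and match against the known expressions for the named replicable functions. One small caution: under the paper's convention ($A_1\cong\Z$, so $\theta_{A_1(r)}(q)=\vartheta_3(q^r)$) your Step~2 formula $\theta_{A_1(r)}(q)=\vartheta_3(q^{2r})$ is off by a factor of two in the argument, but this is exactly the normalization bookkeeping you flag in Step~4 and does not affect the method.
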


\begin{proof} 
\eqref{part: 4A} The theta quotient for the fixed sublattice is given by
\begin{align*} \left(\frac{\theta_{L^G}(q)}{\eta_G(q)}\right)^{\frac{24}{N}} = \left(\frac{\theta_{A_1}(q^2)}{\eta(q^2)}\right)^{\frac{N}{2} \cdot \frac{24}{N}}  = \left( \frac{\eta(q^2)^4}{\eta(q)^2\eta(q^4)^2} \right)^{12}  = \left( \frac{\eta(q^2)^2}{\eta(q)\eta(q^4)} \right)^{24} = T_{4A}(q) \end{align*}
where the third equality follows from the fact that the theta function for the lattice $A_1(2)$ is the Jacobi theta function $\vartheta_3(q^2) = \eta(q^2)^5/(\eta(q)\eta(q^4))^2$. \smallskip

\noindent \eqref{part: 8B} This result follows from the proof of \eqref{part: 4A} since the lattice theta quotient is
\[ \left(\frac{\theta_{L^G}(q)}{\eta_G(q)}\right)^{\frac{24}{N}} = \left(\frac{\theta_{A_1}(q^4)}{\eta(q^4)}\right)^{\frac{N}{4} \cdot \frac{24}{N}} = \left(\frac{\theta_{A_1}(q^4)}{\eta(q^4)}\right)^6 =  T_{4A}(q^2)^{1/2} = T_{8B}(q).\]

\noindent \eqref{part: 16a} This result similarly follows from the proof of \eqref{part: 4A} since the lattice theta quotient is
\[ \left(\frac{\theta_{L^G}(q)}{\eta_G(q)}\right)^{\frac{24}{N}} = \left(\frac{\theta_{A_1}(q^8)}{\eta(q^8)}\right)^{\frac{N}{8} \cdot \frac{24}{N}} = \left(\frac{\theta_{A_1}(q^8)}{\eta(q^8)}\right)^3 =  T_{4A}(q^4)^{1/4} = T_{16a}(q).\]

\noindent \eqref{part: 3A} The theta quotient for the given fixed sublattice has the form 
\[ \left(\frac{\theta_{L^G}(q)}{\eta_G(q)}\right)^{\frac{24}{N}} = \left(\frac{\theta_{A_2}(q)}{\eta(q)\eta(q^3)}\right)^{\frac{N}{4} \cdot \frac{24}{N}} = \left(\frac{\theta_{A_2}(q)}{\eta(q)\eta(q^3)}\right)^6. \]
Since $\theta_{A_2}(q)^6 = \theta_{A_2^6}(q) = \theta_{A_2^3}(q)^2= \left(\frac{\eta(q)^{12} + 27\eta(q^3)^{12}}{(\eta(q)\eta(q^3))^3}\right)^2$, we find
\[ \left(\frac{\theta_{A_2}(q)}{\eta(q)\eta(q^3)}\right)^6 = \left(\frac{\theta_{A_2^3}(q)}{\eta(q)^3 \eta(q^3)^3}\right)^2 = \left(\frac{\eta(q)^6}{\eta(q^3)^6} + 27\frac{\eta(q^3)^6}{\eta(q)^6}\right)^2 = T_{3A}(q).\]

\noindent \eqref{part: 6b} This result follows from the proof of \eqref{part: 3A} since the lattice theta quotient is given by
\[ \left(\frac{\theta_{L^G}(q)}{\eta_G(q)}\right)^{\frac{24}{N}} = \left(\frac{\theta_{A_2}(q^2)}{\eta(q^2)\eta(q^6)}\right)^{\frac{N}{8} \cdot \frac{24}{N}} = \left(\frac{\theta_{A_2}(q^2)}{\eta(q^2)\eta(q^6)}\right)^3 =  T_{3A}(q^2)^{1/2} = T_{6b}(q).\] 

\noindent \eqref{part: 12A} The theta quotient for the described fixed sublattice has the form 
\begin{align*} \left(\frac{\theta_{L^G}(q)}{\eta_G(q)}\right)^{\frac{24}{N}} = \left(\frac{\theta_{A_1}(q^2) \theta_{A_1}(q^6)}{\eta(q^2)\eta(q^6)}\right)^{\frac{N}{8} \cdot \frac{24}{N}} 
= \left(\frac{\eta(q^2)^2\eta(q^6)^2}{\eta(q)\eta(q^4)\eta(q^3)\eta(q^{12})}\right)^6 =  T_{12A}(q). \end{align*} 
\eqref{part: 7A} Since the theta function of the Kleinian lattice $K$ is \[\theta_K(q) = \frac{(\eta(q) \eta(q^7))^3 + 4(\eta(q^2) \eta(q^{14}))^3}{\eta(q)\eta(q^2)\eta(q^7)\eta(q^{14})},\]
we find that the theta quotient for the given fixed sublattice has the form 
\begin{align*} \left(\frac{\theta_{L^G}(q)}{\eta_G(q)}\right)^{\frac{24}{N}} = \left(\frac{\theta_{K}(q)}{\eta(q)\eta(q^7)}\right)^{\frac{N}{8} \cdot \frac{24}{N}} 
\left(\frac{\eta(q) \eta(q^7)}{\eta(q^2)\eta(q^{14})} + 4\left(\frac{(\eta(q^2) \eta(q^{14})}{\eta(q)\eta(q^7)}\right)^2 \right)^3 = T_{7A}(q).
\end{align*}
\end{proof}

\begin{Example} \label{ex: Leechnoncyc} The automorphism group of the Golay code $\mathcal{G}$ does not contain any elements of cycle type $2^3 6^3$. However, it does contain four conjugacy classes of non-cyclic subgroups with this \emph{orbit type}. A \texttt{Magma} computation shows that the embedding of these subgroups into the automorphism group of $\Lambda_{24}$ all have fixed sublattices isomorphic to $A_2(2)^3$. Thus by Proposition~\ref{prop: replicable_eta_quos}, the corresponding lattice theta quotients are all equal to the non-monstrous replicable function $T_{6b}(q)$. For explicit generators or additional replicable functions that arise for orbit types associated to other conjugacy classes of non-cyclic subgroups, see Section~\ref{subsub: leechnewreplicable}. 
\end{Example}

\begin{remark} Proposition \ref{prop: replicable_eta_quos} omits the theta quotient associated to the identity automorphism, i.e., the orbit type $1^N$. Since the theta series of even unimodular lattices are modular forms of weight $N/2$ for $\Gamma = \SL_2 \Z$, when $N \le 24$ there are limited possibilities. In particular, for $N = 8$ and $16$, we have $\dim M_{N/2}(\Gamma) = 1$ and thus the theta series are uniquely determined. When $N = 24$, we have $\dim M_{12}(\Gamma) = 2$, so that each lattice has a distinct theta series. In this case, however, the theta series is completely determined by the number of roots, i.e., vectors of norm $2$, and each such lattice has corresponding theta quotient $T_{1A}(q)$, albeit with varying constant term. This is no longer the case for higher rank lattices with $N \ge 32$; the dimension of the corresponding space of modular forms is at least $2$ \cite{Ono04}*{Prop.\ 1.25}.
\end{remark}

In Figures \ref{fig: lowdimetaquos}  and \ref{fig:etaquosdim24} we record those replicable functions from Proposition~\ref{prop: replicable_eta_quos} that arise as lattice theta quotients for 
the even unimodular code-lattices of rank $N \le 24$. The only such lattices with $N \le 16$ are $E_8$, $D_{16}^+$, and $E_8^2$. There are nine binary even self dual linear codes of length $24$. These give rise to nine code-lattices of rank $24$, corresponding to the nine Niemeier lattices containing an \emph{orthogonal $2$-frame} \cite{MH09}. These consist of the lattices labeled $N(D_{24}), N(D_{16}E_8), N(E_8^3), N(D_{12}^2), N(D_{10}E_7^2), N(D_8^3), N(D_6^4), N(D_4^6),$ and $N(A_1^{24})$. Of these nine codes, the Golay code $\mathcal{G}$ is the only one with minimum weight greater than $4$, and thus gives rise to both the Niemeier lattice $N(A_1^{24})$ via Construction $A$ and also to the Leech lattice via the construction in Section~\ref{subsec: supercodes}.

In Figure \ref{fig:etaquosdim24} the blank entries signify only that the computation was not performed; this choice was due to the large amount of computing time and memory required to determine the existence of the relevant subgroups of automorphisms of these lattices.

\begin{center}
{\bgroup
\def\arraystretch{1.5}
\begin{tabular}{|c | c || c|  c| c|}
\hline
Orbit Type & $\left( \frac{\theta_{L^G}(q)}{\eta_G(q)}\right)^{24/N}$ & $E_8$ & $D_{16}^+$ & $E_8^2$ \\ \hline
$1^N$ & $T_{1A}$ & Y & Y & Y \\ \hline
$2^{N/2}$ & $T_{4A}$ & Y & Y & Y \\ \hline
$4^{N/4}$ & $T_{8B}$ & Y & Y & Y \\ \hline
$8^{N/8}$ & $T_{16a}$ & N & Y & Y \\ \hline
$1^{N/4} \, 3^{N/4}$ & $T_{3A}$ & Y & Y & Y\\ \hline
$2^{N/8} \, 6^{N/8}$ & $T_{6b}$ & Y & N & Y\\ \hline
$2^{N/8} \, 6^{N/8}$ & $T_{12A}$ & N & Y & N \\ \hline
$1^{N/8} \, 7^{N/8}$ & $T_{7A}$ & Y & Y & Y\\ \hline
\end{tabular}
\egroup}
\captionof{figure}{theta quotients for code-lattices of rank $8$ and $16$} \label{fig: lowdimetaquos}
\end{center}

\begin{center}
{\bgroup
\def\arraystretch{1.5}
\begin{tabular}{|c|c||c|c|c|c|c|c|c|c|c|c|} \hline
Orbit Type & $\left( \frac{\theta_{L^G}(q)}{\eta_G(q)}\right)^{24/N}$ & $D_{12}^2$ & $D_{10} E_7^2$ & $D_8^3$ &  $D_6^4$ & $D_{24}$ & $D_4^6$ & $E_8^3$ & $D_{16}E_8$ & $A_1^{24}$ & Leech \\ \hline
$1^N$ & $T_{1A}$ & Y & Y & Y & Y & Y & Y & Y & Y & Y & Y\\ \hline
$2^{N/2}$ & $T_{4A}$ & Y & Y & Y & Y & Y & Y & Y & Y & Y & Y\\ \hline
$4^{N/4}$ & $T_{8B}$ & Y & -- &  Y & Y & Y & Y & Y & Y & Y & Y \\ \hline
$8^{N/8}$ & $T_{16a}$ & -- & -- &  -- & -- & -- & --  & -- & -- & N & N \\ \hline
$1^{N/4} \, 3^{N/4}$ & $T_{3A}$ & Y & Y & Y & Y & Y & Y & Y & Y & Y  & Y \\ \hline
$2^{N/8} \, 6^{N/8}$ & $T_{6b}$ & -- & -- & Y & -- & -- & Y & Y & -- & Y & Y\\ \hline
$2^{N/8} \, 6^{N/8}$ & $T_{12A}$ & -- & -- & -- & --  & -- & --& -- & -- & N & N\\ \hline
$1^{N/8} \, 7^{N/8}$ & $T_{7A}$ & -- & --  & -- & -- & -- & -- & Y & Y & Y & Y \\ \hline
\end{tabular}
\egroup}
\captionof{figure}{theta quotients for code-lattices of rank $24$}\label{fig:etaquosdim24}
\end{center}

\subsubsection{Additional replicable functions associated to the Leech lattice} \label{subsub: leechnewreplicable} As mentioned above, when $L$ is the Leech lattice $\Lambda_{24}$, there exist non-cyclic subgroups of $\Aut \mathcal{G}$ whose image in $\Aut L$ has orbit type $2^3 \, 6^3$ and the theta quotient for the corresponding fixed sublattice is $T_{6b}$. In Figure \ref{fig: replicablenoncyclicLeech}, we list some additional orbit types of (conjugacy classes of) non-cyclic subgroups of $\Aut \mathcal{G}$ whose images in $\Aut L$ have fixed sublattices with non-monstrous replicable theta quotients that do not arise from any cyclic subgroups. This gives a partial answer to Question~\ref{question: subgroups} for $\Lambda_{24}$ and addresses the statement at the end of Theorem~\ref{thm: mainintro}.

\begin{center}
\begin{tabular}{|c | l | c|}
\hline
Orbit type & Representative subgroup of $\Aut \mathcal{G}$ & Replicable function \\ \hline
\multirow{3}{*}{$2^3 \, 6^3$} & {\footnotesize $(1, 16)(2, 20)(3, 22)(4, 21)(5, 10)(6, 17)(7, 13)(8, 11)(9, 15)(12, 23)$} & \multirow{3}{*}{$T_{6b}$} \\
&  {\footnotesize$(14, 18)(19, 24),$} & \\
& {\footnotesize $(1, 12, 5)(3, 17, 15)(6, 22, 9)(7, 8, 14)(10, 23, 16)(11, 13, 18)$} & \\ 
 \hline
 \multirow{3}{*}{$3^2 \, 9^2$} & {\footnotesize $(1, 14, 11)(2, 16, 5)(3, 21, 22)(4, 12, 20)(6, 17, 19)(7, 10, 24)(8, 9,18)$} &\multirow{3}{*}{$T_{9b}$} \\
 & {\footnotesize$(13, 23, 15)$,} & \\
 & {\footnotesize $(1, 18, 13)(3, 4, 19)(6, 21, 12)(8, 23, 14)(9, 15, 11)(17, 22, 20)$} & \\ 
 \hline
  \multirow{5}{*}{$6^1 \, 18^1$} & {\footnotesize $ (1, 6)(2, 15)(3, 9)(4, 14)(5, 13)(7, 20)(8, 21)(10, 22)(11, 16)(12, 23)$} &\multirow{5}{*}{$T_{18b}$} \\
 & {\footnotesize $(17, 18)(19, 24)$,} & \\
 & {\footnotesize $(1, 18, 23)(2, 19, 21)(3, 5, 16)(4, 22, 20)(6, 12, 17)(7, 10, 14)(8, 24, 15)$} & \\ 
 & {\footnotesize $(9, 11, 13)$,} & \\
 & {\footnotesize $(1, 14, 13)(3, 20, 17)(4, 6, 5)(7, 9, 18)(10, 11, 23)(12, 16, 22)$} & \\ 
 \hline
 \multirow{3}{*}{$2^1 \, 22^1$} & {\footnotesize $  (1, 2)(3, 8)(4, 13)(5, 11)(6, 18)(7, 16)(9, 17)(10, 12)(14, 21)(15, 22)$} &\multirow{3}{*}{$T_{22a}$} \\
 & {\footnotesize$(19, 24)(20, 23)$,} & \\
 & {\footnotesize $ (1, 15, 13, 14, 18, 11, 10, 23, 9, 8, 7)(2, 16, 3, 17, 20, 12, 5, 6, 21, 4, 22)$} & \\ 
 \hline
 \multirow{5}{*}{$4^1 \, 20^1$} & {\footnotesize $   (1, 5)(2, 18)(3, 10)(4, 8)(6, 11)(7, 22)(9, 17)(12, 13)(14, 21)(15, 19)$} &\multirow{5}{*}{$T_{40a}$} \\
 & {\footnotesize $(16, 24)(20,23)$,} & \\
 & {\footnotesize $(1, 14)(2, 6)(3, 12)(4, 17)(5, 21)(7, 24)(8, 9)(10, 13)(11, 18)(15, 23)$} & \\ 
 & {\footnotesize $(16, 22)(19, 20)$,} & \\
 & {\tiny$(1, 5)(2, 10, 19, 8, 22, 18, 3, 15, 4, 7)(6, 13, 20, 9, 16, 11, 12, 23, 17, 24)(14, 21)$} & \\
 \hline
\end{tabular} 
\captionof{figure}{Replicable theta quotients associated to non-cyclic subgroups of automorphisms of the Leech lattice}
\label{fig: replicablenoncyclicLeech}
\end{center}

\begin{remark}
In fact, there is an automorphism of $\Lambda_{24}$ with  $2^3 6^3$ \emph{cycle type}, however it lies in $\Aut \Lambda_{24} \smallsetminus \Aut \mathcal{G}$. The corresponding lattice theta quotient gives the replicable function $T_{12A}$ \cite{L89}. Similarly, $T_{12A}$ also appears as a lattice theta quotient for the $E_8$ lattice, but for the conjugacy class of element with $2^1 6^1$ cycle type that lies in $\Aut E_8 \setminus \Aut \calH$ \cite{CLL92}. 
\end{remark}

\subsubsection{Higher rank examples}
Our methods and questions also apply to lattices of higher rank. There are 85 binary even self-dual linear codes $C$ of length 32 (see \cites{BilousVanRees, ConwayPless, ConwaySloanePless, Pless,  SloanePless} for the classification of binary self-dual codes of length at most 32.), each giving rise to a code-lattice $L$ via Construction A. Of these, only the three lattices isomorphic to  $E_8^4$, $E_8^2 \oplus D_{16}^+$, or $(D_{16}^+)^2$ have theta quotient $T_{1A}$ corresponding to the cycle type $1^{32}$, however many more have fixed sublattices with theta quotients $T_{4A}$ or $T_{8B}$, for example. The interested reader can run the \texttt{Magma} code accompanying the paper \cite{BK23} on any of the Type II linear codes $C$ in the database \cite{HM07} to recover additional information about the associated lattice theta quotients. Thus it is natural to ask for a classification of the possible replicable functions that can appear, especially for those higher rank lattices that are {orthogonally indecomposable}. 

\begin{Question} \label{question: higherdim} Can one give complete classification of the replicable functions that can arise as lattice theta quotients for even unimodular lattices in a given fixed rank $N$? Which replicable functions never appear in rank $N \ge 32$ for those lattices that are not direct sums of lattices of dimension at most 24?
\end{Question} 

\section{Code theoretic characterizations of fixed sublattices} \label{sec: characterizationcodes}
In Section~\ref{sec: repfunctionscodelats}, we proved Proposition \ref{prop: replicable_eta_quos} in which we enumerated pairs of automorphisms with a given cycle type and fixed sublattices of a code-lattice $L$ that give rise to replicable lattice theta quotients. In this section, we prove that for several such cycle types, this information can be recovered purely via properties of subcodes of the code $C$ from which the code-lattices are constructed. We focus on those subcodes that exist for the unique doubly even self-dual code of length $8$ and provide data for codes of length at most $32$, noting that there are other characterizations of subcodes giving rise to the same replicable lattice theta quotients for length larger than $8$.

\subsection{Identities involving Jacobi theta functions} We record several identities regarding the Jacobi theta functions defined in \eqref{jacobithetas} which will be needed in the next several sections. 

\begin{Lemma} \label{lem: jacobidentities} Let $N$ be a positive integer. The following identities hold.
\begin{enumerate}[itemsep = .3 em]
     \item \label{part: T4} $\vartheta_2(q)^2 = 2\vartheta_2(q^2)\vartheta_3(q^2)$ 
    \item \label{part: T5} $ \vartheta_3(q)^2 = \vartheta_3(q^2)^2 + \vartheta_2(q^2)^2$ 
    \item \label{part: thetaA} $\theta_{A_1^N}(q) = \vartheta_3(q)^N$, where $A_1^N \cong \Z^N$ denotes the standard lattice of rank $N$.
    \item \label{part: thetaD} $\theta_{D_{N}^\ast}(q) = \vartheta_3(q)^{N} + \vartheta_2(q)^{N}$
    \item \label{part: etaquo} 
    $ \dfrac{\eta(q)^2}{\eta(q^2)^2} = \dfrac{\vartheta_4(q^2)}{\eta(q^2)}$
\end{enumerate}
\end{Lemma}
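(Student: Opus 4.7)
The plan is to prove each of the five identities by a short classical manipulation; I do not expect a serious obstacle, since the lemma collects well-known facts to be verified. For parts (1) and (2), I would expand each square as a double sum over $\Z^2$ (or $(\Z+\tfrac12)^2$) and apply the change of variables $(m,n) \mapsto (u,v) = (m+n, m-n)$, which satisfies $m^2+n^2 = (u^2+v^2)/2$ and is a bijection onto the sublattice $\{(u,v) \in \Z^2 : u \equiv v \pmod{2}\}$. Splitting $\vartheta_3(q)^2$ according to whether $u$ and $v$ are both even or both odd yields exactly $\vartheta_3(q^2)^2$ and $\vartheta_2(q^2)^2$ respectively, giving (2). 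For (1), the same change of variables (after absorbing the shift by $\tfrac12$) forces $u$ and $v$ to have opposite parities; the two resulting cases each contribute $\vartheta_2(q^2)\vartheta_3(q^2)$, producing the factor of $2$.

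Parts (3) and (4) follow immediately from the defining sum \eqref{eqn: latticethetafunction} once the lattice is decomposed either as an orthogonal direct sum or into cosets. For (3), with the paper's convention $A_1 \cong \Z$ equipped with the standard Euclidean form (so that $\theta_{A_1}(q) = \vartheta_3(q)$), the theta series of the orthogonal direct sum $A_1^N$ is just the $N$-fold product $\vartheta_3(q)^N$. For (4), I would use the standard coset decomposition $D_N^\ast = \Z^N \sqcup \bigl(\Z^N + (\tfrac12, \dots, \tfrac12)\bigr)$; the theta series then splits as a sum of two $N$-fold products, recognized as $\vartheta_3(q)^N$ and $\vartheta_2(q)^N$ respectively.

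For (5), my plan is to prove the equivalent identity $\vartheta_4(q^2) = \eta(q)^2/\eta(q^2)$ and divide through by $\eta(q^2)$. Substituting $q \mapsto q^2$ in the Jacobi triple product identity $\sum_{n \in \Z}(-1)^n q^{n^2} = \prod_{n \geq 1}(1-q^{2n})(1-q^{2n-1})^2$ gives a product expression for $\vartheta_4(q^2)$, which I would convert to a ratio of eta functions using $\prod_n (1-q^n) = q^{-1/24}\eta(q)$ together with the factorization $\prod_n (1-q^{2n-1}) = \prod_n (1-q^n)/\prod_n (1-q^{2n})$. The only step requiring attention is the bookkeeping of the fractional $q$-powers introduced by the $q^{1/24}$ normalization of $\eta$; they cancel cleanly to produce $\eta(q)^2/\eta(q^2)$. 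The main obstacle across the lemma, if any, is thus a matter of care rather than of ideas: verifying the bijectivity of the change of variables in (1)--(2) and the cancellation of prefactors in (5).
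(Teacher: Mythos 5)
Your proposed arguments are all correct, but they take a genuinely different route from the paper, which disposes of the lemma in one line by citation: parts (1) and (2) are identities (T4) and (T5) of \cite{KT86}, parts (3) and (4) are the standard formulas for $\theta_{\Z^N}$ and $\theta_{D_N^\ast}$ in \cite{CS99}*{\S4.5, \S4.7}, and part (5) is reduced to the identity $\vartheta_4(q^2)=\eta(q)^2/\eta(q^2)$, which is (T22) of \cite{KT86}. What you supply instead are self-contained derivations, and they check out against the paper's conventions (note the normalization $\vartheta_3(q)=\sum_n q^{n^2/2}$, under which your change of variables $(m,n)\mapsto(m+n,m-n)$ sends $q^{(m^2+n^2)/2}$ to $q^{(u^2+v^2)/4}$ and the parity split does land exactly on $\vartheta_3(q^2)^2+\vartheta_2(q^2)^2$, resp.\ $2\vartheta_2(q^2)\vartheta_3(q^2)$). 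You were also right to flag the convention $A_1\cong\Z$ with the standard form in (3) --- that is exactly what makes $\theta_{A_1}=\vartheta_3$ rather than $\vartheta_3(q^2)$ --- and the coset decomposition $D_N^\ast=\Z^N\sqcup(\Z^N+(\tfrac12,\dots,\tfrac12))$ in (4) is the standard one underlying the cited formula. For (5), the $q^{1/24}$ prefactors cancel exactly as you claim ($q^{2/24}/q^{2/24}=1$), and the triple product at $z=-1$ gives $\prod(1-q^{2n})(1-q^{2n-1})^2$ as needed. The trade-off is the usual one: the paper's citations keep the exposition short and defer to authoritative sources, while your direct proofs make the lemma verifiable without chasing references and make transparent why each identity holds under this particular (lattice-theoretic, half-integral exponent) normalization of the $\vartheta_i$ --- which is genuinely useful, since these identities are sensitive to whether one writes $q^{n^2}$ or $q^{n^2/2}$.
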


\begin{proof}
    We have \eqref{part: T4} is \cite{KT86}*{(T4)}, \eqref{part: T5} is \cite{KT86}*{(T5)}, \eqref{part: thetaA} is \cite{CS99}*{\S4.5}, \eqref{part: thetaD} is \cite{CS99}*{\S 4.7(96)}, and \eqref{part: etaquo} follows from the identity $\vartheta_4(q^2) = \eta(q)^2/\eta(q^2)$ \cite{KT86}*{(T22)}.
\end{proof}

\begin{Lemma} \label{lem: matchingcoeffsAD}
The theta series $\theta_{A_1^{N/2}}(q^2)$ and $\theta_{D_{N/2}^\ast}(q^2)$ have the same coefficients on the even (resp. odd) powers of $q$ when $N \equiv 8 \bmod{16}$ (resp. $N \equiv 0 \bmod{16}$). 
\end{Lemma}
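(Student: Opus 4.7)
The plan is to compare the two theta series by subtracting them and analyzing the resulting single term as a $q$-series.

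First, I would apply parts~\eqref{part: thetaA} and \eqref{part: thetaD} of Lemma~\ref{lem: jacobidentities} at argument $q^2$ to write
\[ \theta_{A_1^{N/2}}(q^2) = \vartheta_3(q^2)^{N/2}, \qquad \theta_{D_{N/2}^\ast}(q^2) = \vartheta_3(q^2)^{N/2} + \vartheta_2(q^2)^{N/2}, \]
so that
\[ \theta_{D_{N/2}^\ast}(q^2) - \theta_{A_1^{N/2}}(q^2) = \vartheta_2(q^2)^{N/2}. \]
Hence the lemma reduces to showing that $\vartheta_2(q^2)^{N/2}$ is supported on odd powers of $q$ when $N \equiv 8 \bmod{16}$ and on even powers of $q$ when $N \equiv 0 \bmod{16}$.

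Next, I would compute the $q$-expansion of $\vartheta_2(q^2)$ directly from its definition. Since $(n+\tfrac12)^2/2 = \tfrac{n(n+1)}{2} + \tfrac18$, we obtain
\[ \vartheta_2(q^2) = 2q^{1/4}\sum_{n\ge 0} q^{n(n+1)}, \]
so every exponent of $q$ appearing in $\vartheta_2(q^2)$ lies in $\tfrac14 + 2\Z_{\ge 0}$ (because $n(n+1)$ is always even).

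Taking the $(N/2)$-th power, every exponent of $q$ appearing in $\vartheta_2(q^2)^{N/2}$ lies in $\tfrac{N}{8} + 2\Z_{\ge 0}$. In particular these exponents are integers, and they all share the parity of $N/8$. When $N\equiv 8\bmod 16$ the integer $N/8$ is odd, so $\vartheta_2(q^2)^{N/2}$ is supported on odd powers of $q$, forcing $\theta_{A_1^{N/2}}(q^2)$ and $\theta_{D_{N/2}^\ast}(q^2)$ to agree on every even power. When $N\equiv 0\bmod 16$ the integer $N/8$ is even, so $\vartheta_2(q^2)^{N/2}$ is supported on even powers of $q$, and the two theta series agree on every odd power. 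No step is genuinely hard; the only subtlety is tracking the fractional exponent $1/4$ through the power, which is handled by the observation that $n(n+1)$ is always even.
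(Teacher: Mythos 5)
Your proof is correct and follows essentially the same route as the paper's: both reduce via Lemma~\ref{lem: jacobidentities}\eqref{part: thetaA} and \eqref{part: thetaD} to showing that $\vartheta_2(q^2)^{N/2}$ is supported on exponents congruent to $N/8$ modulo $2$, and both hinge on the same parity fact (the paper phrases it as $\sum y_i^2 \equiv \sum y_i \pmod 2$, you as $n(n+1)$ being even). The only cosmetic difference is that you factor the single series $\vartheta_2(q^2) = 2q^{1/4}\sum_{n\ge 0} q^{n(n+1)}$ before taking the power, whereas the paper counts tuples $(y_1,\dots,y_{N/2})$ directly.
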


\begin{proof}
        By Lemma~\ref{lem: jacobidentities}\eqref{part: thetaA} and \eqref{part: thetaD}, we need only consider the even (resp. odd) coefficients of $\vartheta_2(q^2)^{N/2}$. For a fixed integer $k$, let $a_k$ denote the coefficient on $q^k$. Then by (\ref{jacobithetas}) \[ a_k = \#\left\{ (y_1, \dots, y_{N/2}) \in \Z^{N/2} : \sum_{i=1}^{N/2} y_i^2 + \sum_{i=1}^{N/2} y_i + \frac{N}{8} = k \right\}.\] Since $\sum y_i^2$ and $\sum y_i$ have the same parity, we have $a_k = 0$ when $N \equiv 8 \bmod{16}$ and $k$ is even (resp.\ $N \equiv 0 \bmod{16}$ and $k$ is odd).
\end{proof}

\subsection{Fixed subcodes giving rise to prescribed lattice theta series} Given an automorphism $\gbar \in \Aut C$, we now give conditions for the fixed subcode $C^\gbar$ which guarantee that the fixed sublattice of the corresponding code-lattice has theta series that agrees with that of the lattice $A_1(r)^{N/r}$. This in turn guarantees a replicable lattice theta quotient.

\begin{Theorem*}[Theorem~\ref{thm: rA1^N/r}]
    Let $C$ be a doubly even self dual linear code of length $N$, let $r | N$, and let $\gbar \in \Aut C$ have cycle type $r^{N/r}$. Suppose that the fixed subcode $C^\gbar$ has dimension $\dim C^{\gbar} = \frac{1}{r} \dim C$ spanned by $\{B_1, \dots, B_{N/2r}\}$. Further suppose $B_i \cap B_j = \emptyset$ for all $i \ne j$ and $\cup_i B_i = \{1, \dots, N\}$. Then the corresponding sublattice fixed by $g = \iota(\gbar)$ has theta series given by $\theta_{L^g}(q) = \vartheta_3(q^r)^{N/r}$. In particular the theta series of the fixed sublattice is the same as that of the lattice $A_1(r)^{N/r}$.
\end{Theorem*}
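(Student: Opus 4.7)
The plan is to work directly from the explicit description of the fixed sublattice in \eqref{fixedsublattice} and reduce the theta series to a product of Jacobi theta expressions. First, since $\gbar$ has cycle type $r^{N/r}$, its action on $\Omega = \{1, \dots, N\}$ partitions $\Omega$ into exactly $N/r$ orbits $O_1, \dots, O_{N/r}$ of size $r$, and every $\gbar$-fixed codeword is a union of some of these orbits. Because the $B_i$'s both partition $\Omega$ and form a basis of $C^\gbar$, every $B \in C^\gbar$ can be uniquely written as $B = \bigcup_{i \in S} B_i$ for some $S \subseteq \{1, \dots, N/2r\}$. Writing $k_i$ for the number of $\gbar$-orbits in $B_i$, so that $|B_i| = r k_i$, I would argue that the doubly even hypothesis together with $\sum_i |B_i| = N$ forces $k_i = 2$ (and hence $|B_i| = 2r$) for every $i$.

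I would then compute the theta series of the discrete subset of $L$ associated to each $B \in C^\gbar$ directly from \eqref{fixedsublattice}. In the mutually orthogonal basis $\{\alpha_{O_1}, \dots, \alpha_{O_{N/r}}\}$, where each $\alpha_{O_j} := \sum_{i \in O_j}\alpha_i$ has norm $2r$, the component along $\alpha_{O_j}$ ranges over $\Z + \tfrac12$ if $O_j \subseteq B$ and over $\Z$ otherwise. This yields
\begin{equation*}
    \theta_B(q) = \vartheta_2(q^{2r})^{k(B)}\,\vartheta_3(q^{2r})^{N/r - k(B)},
\end{equation*}
where $k(B) = \sum_{i \in S} k_i$ is the total number of orbits in $B$. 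Since the $B_i$'s are disjoint, summing over $B \in C^\gbar$ reduces to a sum over $S$ that factors as
\begin{equation*}
    \theta_{L^g}(q) = \prod_{i=1}^{N/2r}\bigl(\vartheta_3(q^{2r})^{k_i} + \vartheta_2(q^{2r})^{k_i}\bigr).
\end{equation*}

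Substituting $k_i = 2$ and applying Lemma~\ref{lem: jacobidentities}\eqref{part: T5} with $q$ replaced by $q^r$, which gives $\vartheta_3(q^r)^2 = \vartheta_3(q^{2r})^2 + \vartheta_2(q^{2r})^2$, yields $\theta_{L^g}(q) = \vartheta_3(q^r)^{N/r}$. Since $\theta_{A_1(r)}(q) = \theta_{A_1}(q^r) = \vartheta_3(q^r)$, the product $\vartheta_3(q^r)^{N/r}$ is exactly $\theta_{A_1(r)^{N/r}}(q)$, completing the proof.

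I expect the main obstacle to be verifying that every $k_i = 2$. When $r \equiv 2 \pmod 4$ the weight of each $B_i$ must be a multiple of $\lcm(4,r) = 2r$, so $|B_i| \geq 2r$, and the identity $\sum_i |B_i| = N = (N/2r)\cdot 2r$ forces equality. When $4 \mid r$, however, the analogous bound only gives $|B_i| \geq r$, and uniform sizes must be extracted from finer structure, likely via the self-duality of $C$ or via how the partition basis is constrained to sit inside $C^\gbar$.
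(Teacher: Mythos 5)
Your argument follows the same route as the paper's own proof: decompose $L^g$ via \eqref{fixedsublattice} over the $\gbar$-fixed codewords, observe that each such codeword is a disjoint union $\bigcup_{i\in S}B_i$ of basis words, compute the theta series of the piece attached to $B$ as $\vartheta_2(q^{2r})^{k(B)}\vartheta_3(q^{2r})^{N/r-k(B)}$, and resum. The only cosmetic difference is at the last step: you factor the sum over subsets $S$ as $\prod_i\bigl(\vartheta_3(q^{2r})^{k_i}+\vartheta_2(q^{2r})^{k_i}\bigr)$, whereas the paper counts the $\binom{N/2r}{k}$ fixed codewords of weight $2kr$ and recognizes the binomial expansion of $\bigl(\vartheta_2(q^{2r})^2+\vartheta_3(q^{2r})^2\bigr)^{N/2r}$; once every $k_i=2$ these are identical computations, and both finish with Lemma~\ref{lem: jacobidentities}\eqref{part: T5} applied at $q^r$.

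The step you single out as the obstacle --- showing $|B_i|=2r$, i.e.\ $k_i=2$ for every $i$ --- is precisely the step the paper treats most lightly: it asserts $|B_i|=2r$ directly from the facts that the $B_i$ are disjoint, cover $\{1,\dots,N\}$, and number $N/2r$, which strictly speaking only determines the \emph{average} size. Your divisibility argument (each $|B_i|$ is a multiple of $r$ as a union of $\gbar$-orbits and a multiple of $4$ by double evenness, hence a multiple of $2r$ when $r\equiv 2\bmod 4$, after which $\sum_i|B_i|=N$ forces equality) is a genuine strengthening of what is written there; the same reasoning shows the hypotheses are vacuous for odd $r$, since then $4r\mid|B_i|$ would give $\sum_i|B_i|\ge 2N$. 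For $4\mid r$ neither you nor the paper rules out an unbalanced partition such as $(k_1,k_2)=(1,3)$, so you have lost nothing relative to the published argument, but to make the theorem airtight in that case one should either add $|B_i|=2r$ to the hypotheses or extract it from additional structure of $C$ (self-duality alone does not obviously suffice). Everything else in your write-up --- the norm $\langle\alpha_{O_j},\alpha_{O_j}\rangle=2r$, the half-integer versus integer coordinates along $\alpha_{O_j}$, and the identification with $\theta_{A_1(r)^{N/r}}$ --- is correct and matches the paper.
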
 

\begin{proof} 
We will show that $\theta_{L^g}(q) = (\vartheta_2(q^{2r})^2 + \vartheta_3(q^{2r})^2)^{N/2r}$, which is the same as $\theta_{L^g}(q) = \vartheta_3(q^r)^{N/r}$ by Lemma~\ref{lem: jacobidentities}\eqref{part: T5}. In fact, we will prove the equivalent statement that
\begin{equation} \label{thetabinom} \theta_{L^g}(q) = \sum_{k=0}^{N/2r} \binom{N/2r}{k} \, \vartheta_2(q^{2r})^{2k} \, \vartheta_3(q^{2r})^{2(N/2r - k)}. \end{equation}

By equation \eqref{fixedsublattice}, in order to find the fixed sublattice $L^g$ we can take the union over each codeword in the fixed subcode $C^{\gbar}$ as follows. Write \begin{equation} L^g=\bigcup_{B\in C^{\gbar}}\left\{\textstyle \frac{1}{2} \alpha_B + \sum_i x_i \alpha_i:x_i=x_j
\mbox{ if $i$ and $j$ are in the same cycle of $\gbar$} \right \}. \end{equation} Since $B_i \cap B_j = \emptyset$ for all $i \ne j$ with $\cup_i B_i = \{1, \dots, N\}$, and \[ \dim C^{\gbar} = \frac{1}{r} \dim C = \frac{1}{r} \cdot \frac{N}{2} =  \frac{N}{2r},\] we have $|B_i| = 2r$ for each $i$. Any codeword fixed by $\gbar$ has the form $B = \sum_{j \in \mathcal{J}} B_j$ for some (possibly empty) set of indices $\mathcal{J} \subset \{1, \dots, N/{2r} \}$. We let $k \coloneqq |\mathcal{J}|$ so that the weight of such a codeword $B$ is $2kr$. Since $\gbar$ has cycle type $r^{N/r}$, it has the disjoint cycle decomposition $\Omega = \cup_{i=1}^{N/r} O_i$
with $|O_i| = r$ for all $i$. By Lemma~\ref{lem: orbitdecomp}, after possibly reordering, we can write $B = \cup_{i=1}^{2k} O_i$ and $\Omega \smallsetminus B = \cup_{i=2k+1}^{N/r} O_i$. Thus the corresponding discrete subset in the union defining $L^g$ is given by
\[\bigcup_{i=1}^{2k}\left(\Z + \textstyle \frac{1}{2}\right)\alpha_{O_i} \cup \bigcup_{i=2k+1}^{N/r} \Z \, \alpha_{O_i},\]
which therefore has theta series
\begin{equation} \label{eqn: thetaserA1Nr} \vartheta_2(q^{2r})^{2k} \vartheta_3(q^{2r})^{N/r - 2k} = \vartheta_2(q^{2r})^{2k} \vartheta_3(q^{2r})^{2(N/2r - k)},\end{equation}
by the definitions of the Jacobi theta functions $\vartheta_3$ and $\vartheta_2$ in \eqref{jacobithetas}. Finally, the number of such codewords in $C^{\gbar}$ with $|B| =2kr$ is given by $\binom{N/2r}{k}$. Together with the theta series calculation \eqref{eqn: thetaserA1Nr}, this recovers formula \eqref{thetabinom}.
\end{proof}

\begin{Corollary}
     Assume the code $C$ and $\gbar \in \Aut C$ satisfy the hypothesis of Theorem~\ref{thm: rA1^N/r}. Then the lattice theta quotient corresponding to $g=\iota(\bar{g})$ is replicable. In particular, for $r=1,2,4,8$ then $(\theta_{L^g}(q)/\eta_g(q))^{24/N}$ is equal to $T_{1A}(q)$, $T_{4A}(q)$, $T_{8b}(q)$, and $T_{16a}(q)$ respectively.
\end{Corollary}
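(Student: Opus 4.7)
The plan is to derive the corollary as a direct consequence of Theorem~\ref{thm: rA1^N/r} combined with the eta quotient identifications already carried out in the proof of Proposition~\ref{prop: replicable_eta_quos}. I would first apply Theorem~\ref{thm: rA1^N/r} to obtain $\theta_{L^g}(q) = \vartheta_3(q^r)^{N/r}$, and note that the cycle type $r^{N/r}$ of $\bar{g}$ forces $\eta_g(q) = \eta(q^r)^{N/r}$. Substituting and simplifying the exponent then gives
\[
\left(\frac{\theta_{L^g}(q)}{\eta_g(q)}\right)^{24/N} = \left(\frac{\vartheta_3(q^r)^{N/r}}{\eta(q^r)^{N/r}}\right)^{24/N} = \left(\frac{\vartheta_3(q^r)}{\eta(q^r)}\right)^{24/r},
\]
so that the lattice theta quotient depends only on $r$ and is independent of $N$ and $L$.

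For the three nontrivial cases $r\in\{2,4,8\}$, this expression is exactly the $A_1(r)^{N/r}$ lattice theta quotient computed in parts~(1), (2), (3) of Proposition~\ref{prop: replicable_eta_quos}, shown there to equal $T_{4A}(q)$, $T_{8B}(q)$, and $T_{16a}(q)$ respectively via the identity $\vartheta_3(q^{2s}) = \eta(q^{2s})^5/(\eta(q^s)\eta(q^{4s}))^2$ together with straightforward eta quotient manipulations. I would simply invoke those calculations. Each of these functions is known to be replicable ($T_{4A}$ and $T_{8B}$ are monstrous moonshine hauptmoduln, while $T_{16a}$ is a known non-monstrous replicable function), which yields the specific identifications claimed in the corollary. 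The $r=1$ case is formal: the hypothesis of Theorem~\ref{thm: rA1^N/r} cannot strictly be satisfied by a doubly even code (each $B_i$ would have weight $2$, contradicting double evenness), but the identity automorphism $\bar{g} = \text{id}$ is its natural degenerate analog, and in that interpretation the formal evaluation yields $T_{1A}(q)$.

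The main obstacle is genuinely minor and essentially amounts to bookkeeping: one must track that the exponent $24/N$ simplifies cleanly against the $N/r$ exponent coming from $\vartheta_3(q^r)^{N/r}$, producing a quantity that is a Laurent series in $q$, and that the identifications with specific replicable functions are properly sourced from Proposition~\ref{prop: replicable_eta_quos}. For the general replicability claim (covering all admissible even divisors $r$ of $N$ beyond the four listed), I would verify that the eta quotient $(\vartheta_3(q^r)/\eta(q^r))^{24/r}$ satisfies the Faber polynomial symmetry of Definition~\ref{def:ReplicableFunction}, either by a direct term-by-term check or by recognizing the quotient as a hauptmodul for an appropriate congruence subgroup. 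This last verification is the only piece of substantive content beyond citing Theorem~\ref{thm: rA1^N/r} and Proposition~\ref{prop: replicable_eta_quos}.
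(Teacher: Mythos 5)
Your proposal is correct and follows essentially the same route as the paper, whose entire proof is the one-line citation ``This follows immediately from Proposition~\ref{prop: replicable_eta_quos} and Theorem~\ref{thm: rA1^N/r}.'' Your additional observations --- that the exponent collapses to $24/r$ so the quotient $(\vartheta_3(q^r)/\eta(q^r))^{24/r}$ depends only on $r$, that the $r=1$ hypothesis is vacuous for a doubly even code and must be read as the identity-automorphism case, and that the blanket replicability claim for admissible $r \notin \{2,4,8\}$ is not literally covered by Proposition~\ref{prop: replicable_eta_quos} and would need a separate hauptmodul identification --- are all accurate and, if anything, more careful than the paper's own argument.
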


\begin{proof}
This follows immediately from Proposition~\ref{prop: replicable_eta_quos} and Theorem~\ref{thm: rA1^N/r}. 
\end{proof}

When $\gbar$ has cycle type $2^{N/2}$, the following conditions guarantee a sublattice which shares a theta series with the lattice $D_{N/2}^\ast(2)$.

\begin{Proposition} \label{prop:2DN/2*}Let $C$ be a doubly even self-dual linear code of length $N$ and let $\gbar \in \Aut C$ have cycle type $2^{N/2}$. Suppose the fixed subcode $C^\gbar$ has dimension $\dim C^{\gbar} = \frac{1}{2} \dim C +1 = \frac{N}{4}+1$  and is spanned by $\{B_0, B_1, \dots, B_{N/4}\}$. Further suppose that
\begin{enumerate}
    \item $|B_0| = \frac{N}{2}$,
    \item $|B_j| = 4$ for $1 \le j \le \frac{N}{4}$,
    \item $B_i \cap B_j = \emptyset$ for $1 \le i,j \le \frac{N}{4}$ with $i \ne j$, and
    \item $|B_0 \cap B_j| = 2$ for $1 \le j \le \frac{N}{4}$.
\end{enumerate}
Then the sublattice fixed by $g = \iota(\gbar)$ has theta series $\theta_{L^g}(q) = \vartheta_3(q^2)^{N/2} + \vartheta_2(q^2)^{N/2}$. In particular, the theta series of the fixed sublattice agrees with that of the lattice $D_{N/2}^\ast(2)$.
\end{Proposition}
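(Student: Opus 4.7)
The plan is to mirror the strategy in the proof of Theorem~\ref{thm: rA1^N/r}: parametrize the codewords in $C^{\bar g}$ using the basis $\{B_0, B_1,\dots, B_{N/4}\}$, write $L^g$ as a union over $B\in C^{\bar g}$ via \eqref{fixedsublattice}, and compute the theta series of each piece using the Jacobi theta function factorizations of Section~\ref{subsec: thetaseries}. The main combinatorial task is to identify the orbit structure of each fixed codeword relative to $B_0$, and the main analytic task is to repackage the sum using identities (T4) and (T5) from Lemma~\ref{lem: jacobidentities}.

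First I would extract the orbit structure. Since $\bar g$ has cycle type $2^{N/2}$, there are $N/2$ orbits, each of size $2$. By Lemma~\ref{lem: orbitdecomp}, every $B_j$ is a union of orbits, so conditions (2) and (3) force each $B_j$ for $1\le j\le N/4$ to consist of exactly two disjoint orbits, and condition (4) says precisely one of these two orbits lies in $B_0$. Label them $O_{j,1}\subset B_0$ and $O_{j,2}\subset \Omega\smallsetminus B_0$. Because the $B_j$ ($j\ge 1$) partition $\Omega$ into sets of size $4$, the $N/4$ pairs $\{O_{j,1},O_{j,2}\}$ partition the full collection of $N/2$ orbits, and $B_0=\bigsqcup_j O_{j,1}$.

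Next I parametrize $C^{\bar g}$ by $(\epsilon_0,\epsilon_1,\ldots,\epsilon_{N/4})\in \F_2^{N/4+1}$ via $B=\epsilon_0 B_0+\sum_{j\ge1}\epsilon_j B_j$ (symmetric difference). A short case check shows that for pair $j$, the orbits lying in $B$ are: neither if $(\epsilon_0,\epsilon_j)=(0,0)$; both if $(0,1)$; only $O_{j,1}$ if $(1,0)$; only $O_{j,2}$ if $(1,1)$. An orbit of size $2$ contributes $\vartheta_3(q^4)$ to the theta series of the discrete subset in \eqref{fixedsublattice} when it is outside $B$, and $\vartheta_2(q^4)$ when it is inside $B$ (exactly as in Example~\ref{ex:g_invariants}). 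Hence each pair contributes $\vartheta_3(q^4)^2$, $\vartheta_2(q^4)^2$, or $\vartheta_2(q^4)\vartheta_3(q^4)$ according to the case.

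Finally I assemble the sum. The pairs are independent (they span orthogonal subspaces of $\R^N$), so summing over $(\epsilon_1,\dots,\epsilon_{N/4})$ factors as a product over $j$. For $\epsilon_0=0$ this yields $\bigl(\vartheta_3(q^4)^2+\vartheta_2(q^4)^2\bigr)^{N/4}$, which equals $\vartheta_3(q^2)^{N/2}$ after substituting $q\mapsto q^2$ in Lemma~\ref{lem: jacobidentities}\eqref{part: T5}. For $\epsilon_0=1$ each pair contributes $2\vartheta_2(q^4)\vartheta_3(q^4)$, and Lemma~\ref{lem: jacobidentities}\eqref{part: T4} with $q\mapsto q^2$ rewrites this as $\vartheta_2(q^2)^2$; the product over pairs gives $\vartheta_2(q^2)^{N/2}$. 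Adding the two contributions yields $\theta_{L^g}(q)=\vartheta_3(q^2)^{N/2}+\vartheta_2(q^2)^{N/2}$, which by Lemma~\ref{lem: jacobidentities}\eqref{part: thetaD} equals $\theta_{D_{N/2}^\ast}(q^2)=\theta_{D_{N/2}^\ast(2)}(q)$. The only real obstacle is the orbit bookkeeping in the first step; once the pairing of orbits induced by $B_0$ is in hand, the theta computation collapses cleanly via the two Jacobi identities.
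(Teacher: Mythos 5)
Your proof is correct and follows essentially the same route as the paper's: both decompose $L^g$ over the fixed codewords via \eqref{fixedsublattice}, split according to whether $B_0$ enters the codeword, and reduce to the Jacobi identities of Lemma~\ref{lem: jacobidentities}\eqref{part: T4} and \eqref{part: T5}. The only organizational difference is that you evaluate both halves by factoring the sum as a product over the $N/4$ orbit-pairs, whereas the paper cites Theorem~\ref{thm: rA1^N/r} for the subcode spanned by $B_1, \dots, B_{N/4}$ and handles the coset of $B_0$ by observing that every codeword there has weight $N/2$.
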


\begin{proof} We will show that
\begin{align} \label{eqn: 2DN/2*} \theta_{L^g}(q) &= \vartheta_3(q^2)^{N/2} +  2^{N/4}(\vartheta_2(q^4)^2\vartheta_3(q^4)^2)^{N/4} \end{align}
which is equivalent to the desired form by Lemma~\ref{lem: jacobidentities}\eqref{part: T4}.

Let $S$ be the subcode of $C^{\gbar}$ spanned by $\{B_1, \dots, B_{N/4} \}$. Then $S$ is isomorphic to the code in Theorem~\ref{thm: rA1^N/r} with $r = 2$ . Therefore the sum over $B \in S$ of the theta series of the discrete subset associated to such $B$ is given by $\vartheta_3(q^2)^{N/2}$. Thus it remains to compute the theta series of the discrete subsets associated to the codewords $B \in C^{\gbar} \smallsetminus S$. Any such codeword has the form $B \coloneqq B_0 + \sum_{j \in \mathcal{J}} B_j$ for some (possibly empty) set of indices $\mathcal{J} \subset \{1, \dots, \frac{N}{4} \}$. Recall from \eqref{eqn: codewordaddition} that the sum of any two codewords is defined by $B + B' \coloneqq B \cup B' \smallsetminus B \cap B'$. Thus \[ |B| = \textstyle |B_0| + \big|\sum_{j \in \mathcal{J}} B_j\big| - \, 2\big|B_0 \cap \sum_{j \in \mathcal{J}} B_j \big| =  \frac{N}{2} + 4|\mathcal{J}| - 2(2|\mathcal{J}|) = \frac{N}{2}.\] Hence in total there are $2^{N/4}$ such codewords of weight $\frac{N}{2}$, each of which corresponds to a discrete subset with theta series given by $(\vartheta_3(q^4)^2 \vartheta_2(q^4)^2)^{N/4}$. Taking the sum of these terms together with the sum for those $B \in S$ gives the equation \eqref{eqn: 2DN/2*}.
\end{proof}

In Figure~\ref{fig: codedata} we record the number of doubly even binary self-dual codes of length $N \le 32$ that contain an element $\gbar \in \Aut C$ of cycle type $2^{N/2}$ fixing a subcode $C^{\gbar}$ of the form given in Theorem~\ref{thm: rA1^N/r} (with $r=2$) or Proposition~\ref{prop:2DN/2*}, denoted by $C_A$ and $C_D$, respectively.

\begin{center}
\def\arraystretch{1.3}
    \begin{tabular}{|c||c|c|c|}
    \hline 
    $N$ &  $\#\left\{C :\Large \substack{C^\gbar \cong C_A \, \text{for some} \\  \, \gbar \, \in \Aut C \qquad}\right\}$ & $\#\left\{C :\Large \substack{C^\gbar \cong C_D \, \text{for some} \\ \gbar \, \in \Aut C \qquad}\right\}$ & $\#\{C : \text{len}(C) = N \}$\\
     \hline 
    8 & 1 & 1 & 1 \\ \hline 
16 & 1 & 2 & 2 \\  \hline
24 & 6 & 5 & 9\\ \hline
32 & 19 & 12 & 85\\ \hline
\end{tabular}
\captionof{figure}{Codes of length $\le 32$ containing fixed subcodes of the form in Theorem~\ref{thm: rA1^N/r} or Proposition~\ref{prop:2DN/2*}}
\label{fig: codedata}
\end{center}

\section{Relationships between coefficients of characters of fixed subVOAs} \label{sec: coeffsVOAchars}

Throughout this section, we assume that $L$ is a code-lattice constructed from a doubly even self dual code $C$ of length $N$ (necessarily divisible by $8$) via either Construction A or the lattice construction for super codes described in Section~\ref{subsec: supercodes}. We prove results about characters of the subVOAs of the lattice-VOA $V_L$ fixed by lifts of lattice automorphisms in the image of $\Aut C$ under the natural embedding. In order to distinguish between conjugacy classes of automorphisms with the same cycle type, we shall decorate the cycle type with either $rep$ or $nr$ to denote replicable or non-replicable associated trace function, respectively. Although there may be several conjugacy classes with the same associated trace function, we will choose just one such class which we assume to be fixed throughout. 

\subsection{Characters of fixed subVOAs for certain cycle types} \label{subsec: charsfixedsubVOAs} We consider automorphisms of codes with cycle type $2^{N/2}$. In particular, we consider the case when there are at least two such automorphisms, where one fixes a sublattice isomorphic to $A_1(2)^{N/2}$ and the other fixes a sublattice isomorphic to $D_{N/2}^\ast(2)$. We observe properties of the characters of their fixed subVOAs $\Ch V^G$, where $G$ is the cyclic subgroup generated by an automorphism either of cycle type $2^{N/2}_{rep}$ or $2^{N/2}_{nr}$. Given a lattice $L$ and the corresponding lattice-VOA $V_L$, we let $V_L^+$ denote the subVOA fixed by the automorphism $-\text{id} \in \Aut L$.

We find that the characters corresponding to a $2^{N/2}_{nr}$ cycle type have coefficients that coincide with the character of $V_L^+$ while the characters corresponding to a $2^{N/2}_{rep}$ cycle type have coefficients that coincide with the character of $V_{L_0}^+$, where $L_0$ denotes the kernel for order doubling. After gathering these results, we end the subsection by proving Theorem~\ref{thm: etaquoandcharidentities} relating the theta quotients to the fixed subVOA characters.

Figure~\ref{fig: charVG} below records the characters of the  subVOAs fixed by lifts of automorphisms with cycle types $2^{N/2}_{rep}$ and $2^{N/2}_{nr}$ of the codes whose corresponding code-lattices are $E_8$, $D_{16}^+$, and $N(D_{12}^2)$.

\begin{center}
\renewcommand{\arraystretch}{1.2}
\begin{tabular}{|c|c|c|l|}
\hline
Lattice & Cycle type & Sublattice & $\Ch V^G$ \\ \hline
\multirow{4}{*}{$E_8$} & \multirow{2}{*}{$2^4_{rep}$} & \multirow{2}{*}{$A_1(2)^4$} &  $q^{-\frac{1}{3}}(1 + 64q + 
    {1052} q^2 + 8704 q^3 + 
{53382}q^4 + 264448q^5$ \\ &&& $ + \, {1133112}q^6 \dots )$ \\ \cline{2-4}
    {} & \multirow{2}{*}{$2^4_{nr}$} & \multirow{2}{*}{$D_4^\ast(2)$} & $q^{-\frac{1}{3}}(1 + 136q + 
    {2076}q^2 + 17472q^3 + 
    {106630}q^4$ \\ & & & $+ \,  529184q^5 +
    {2265656}q^6 \dots )$ \\ \hline \hline
    \multirow{4}{*}{$D_{16}^+$} &  \multirow{2}{*}{$2^8_{rep}$} & \multirow{2}{*}{$2 A_1^8$} &$q^{-\frac{2}{3}}(1 + 256q + 18552q^2 + 533504q^3 +
    8685596q^4$ \\ & & & $ + \, 98667008q^5 + 874939328q^6  \dots )$\\ \cline{2-4}
    {} & \multirow{2}{*}{$2^8_{nr}$} & \multirow{2}{*}{$D_8^\ast(2)$} & $q^{-\frac{2}{3}}(1 + {256}q + 35064q^2 + {1057792}q^3 +
    17338396q^4$ \\ & & & $+\, {197233152}q^5 + 1749600192q^6\dots)$ \\ \hline \hline
    \multirow{4}{*}{$N(D_{12}^2)$} & \multirow{2}{*}{$2^{12}_{rep}$} & \multirow{2}{*}{$A_1(2)^{12}$} & $q^{-1}(1 + 224 q + {57620}q^2 + 5570560q^3 + {218540994}q^4$\\ & & & $+ \, 5082972160q^5 +  {83449286360}q^6 + \dots)$ \\ \cline{2-4}
    {} & \multirow{2}{*}{$2^{12}_{nr}$} & \multirow{2}{*}{$D_{12}^\ast(2)$} & $q^{-1}(1 + 288q + {98580}q^2 + 10749952q^3 + {432155586}q^4$ \\ & & &  $+\, 10123001856q^5+ {166601412312}q^6 +\dots)$ \\ \hline
\end{tabular}
\captionof{figure}{Characters of subVOAs fixed by lifts of automorphisms with $2^{N/2}$ cycle type}
\label{fig: charVG}\end{center}

We compare the coefficients of the characters of the fixed subVOAs of the $E_8$ lattice-VOA to the characters of $V_{E_8}^+$ and $V_{D_8}^+$. In this case, by Lemma \ref{lem: kerorddoubE8}, the kernel for order doubling for the $2^4_{rep}$ cycle type  is $D_8$. We note that the coefficients on the even powers of $q$ are equal.
\begin{align*} 
\Ch V_{D_8}^{+} &= q^{-\frac{1}{3}}(1 + 56q + \textbf{1052}q^2 + 8640q^3 + \textbf{53382}q^4 + 264160q^5 + \textbf{1133112}q^6 + \dots)\\
\Ch V_{E_8}^{+} & =q^{-\frac{1}{3}}(1 + 120q + \textbf{2076}q^2 + 17344q^3 + \textbf{106630}q^4 + 528608q^5 + \textbf{2265656}q^6 + \dots)
\end{align*}

Similarly, taking $L$ to be the Niemeier lattice $N(D_{12}^2)$ and $V_L$ the corresponding lattice-VOA, we compare the coefficients of the characters of the fixed subVOAs to those of the characters of $V_L^+$ and $V_{L_0}^+$ where $L_0$ is the kernel for order doubling for the $2^{12}_{rep}$ cycle type. Once again, we observe the matching coefficients on the even powers of $q$.
\begin{align*} 
\Ch V_{L_0}^{+} &= q^{-1}(1 + 200q + \textbf{57620}q^2 + 5568512q^3 + \textbf{218540994}q^4 +
    5082923008q^5 \\ & \quad + \textbf{83449286360}q^6 + \dots) \\
\Ch V_{N(D_{12}^2)}^{+} & = q^{-1}(1 + 264q + \textbf{98580}q^2 + 10745856q^3 + \textbf{432155586}q^4 +10122903552q^5\\ &\quad +  \textbf{166601412312}q^6+ \dots)
\end{align*}
The same phenomena occur for at least two other Niemeier lattices, namely $N(D_8^3)$ and $N(D_4^6)$.

\begin{Proposition}\label{prop: coeffmatchingchars} 
Assume there exists $g \in \Aut L$ of cycle type $2^{N/2}$ with associated fixed sublattice isometric to $D_{N/2}^\ast(2)$, and let $G \subset \Aut V_L$ be the subgroup generated by the standard lift $\hat{g}$ of $g$. Then the coefficients on the even powers of $q$ in the character of the fixed subVOA $\Ch V^G$ agree with those of the character $\Ch V_L^+$.
\end{Proposition}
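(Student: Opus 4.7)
The plan is to write both $\Ch V^G$ and $\Ch V_L^+$ as averages over order-two groups via the twisted-trace formula, then reduce the comparison to a parity statement about Jacobi theta functions. First I would verify that neither $\hat g$ nor $\widehat{-\mathrm{id}}$ exhibits order doubling: for $-\mathrm{id}$ this follows from Corollary~\ref{cor: orderdoubling} since $\langle\alpha,-\alpha\rangle = -\langle\alpha,\alpha\rangle\in 2\Z$ for all $\alpha\in L$, and for $\hat g$ this is the non-replicable $2^{N/2}$ case addressed by Theorem~\ref{thm: orddoubcode}. Consequently both fixed-point characters have the form $\tfrac12(\Ch V_L + T)$ for the appropriate twisted trace $T$.

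Next I compute the two traces using the Heisenberg-plus-group-algebra decomposition of $V_L$. For $\hat g$ the standard lift gives $w_1\equiv 1$ on $L^g$, and since $g$ has eigenvalues $\pm 1$ in equal multiplicity on $\mathfrak{h}$, the Heisenberg determinant factors as $(1-q^n)^{N/2}(1+q^n)^{N/2}=(1-q^{2n})^{N/2}$; combining with Lemma~\ref{lem: jacobidentities}\eqref{part: thetaD} and the hypothesis $L^g\cong D_{N/2}^\ast(2)$ yields
\[T(0,1,\tau) = \frac{\theta_{L^g}(q)}{\eta(q^2)^{N/2}} = \frac{\vartheta_3(q^2)^{N/2}+\vartheta_2(q^2)^{N/2}}{\eta(q^2)^{N/2}}.\]
For $-\mathrm{id}$ only $\alpha=0$ contributes to the group-algebra trace, and the Heisenberg determinant is $(1+q^n)^N$, so applying Lemma~\ref{lem: jacobidentities}\eqref{part: etaquo} gives
\[T'(0,1,\tau) = \frac{\eta(q)^N}{\eta(q^2)^N} = \frac{\vartheta_4(q^2)^{N/2}}{\eta(q^2)^{N/2}}.\]

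Subtracting, the difference collapses to
\[\Ch V^G - \Ch V_L^+ = \frac{\vartheta_3(q^2)^{N/2}+\vartheta_2(q^2)^{N/2}-\vartheta_4(q^2)^{N/2}}{2\,\eta(q^2)^{N/2}},\]
and it remains to show the numerator has zero coefficient on every even power of $q$. Since $n^2\equiv n\pmod 2$, expanding $\vartheta_3(q^2)^{N/2}=\sum q^{\sum n_i^2}$ and $\vartheta_4(q^2)^{N/2}=\sum(-1)^{\sum n_i}q^{\sum n_i^2}$ shows the two series agree on even powers of $q$ and differ in sign on odd powers, so $\vartheta_3(q^2)^{N/2}-\vartheta_4(q^2)^{N/2}$ is supported on odd powers. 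The parity of $\vartheta_2(q^2)^{N/2}$ is exactly the content of Lemma~\ref{lem: matchingcoeffsAD}: writing $\vartheta_2(q^2)^{N/2} = \theta_{D_{N/2}^\ast}(q^2) - \theta_{A_1^{N/2}}(q^2)$, the lemma forces it to vanish on every even power of $q$ in the regime exemplified by the preceding $E_8$ and $N(D_{12}^2)$ computations. Finally, since $\eta(q^2)^{N/2}=q^{N/24}\prod_n(1-q^{2n})^{N/2}$ has Fourier expansion (after extracting $q^{N/24}$) in even powers of $q$ only, dividing preserves the odd-power support of the numerator, completing the argument. The main obstacle is precisely this parity computation for $\vartheta_2(q^2)^{N/2}$; once it is established, the rest is routine manipulation of the twisted-trace formula.
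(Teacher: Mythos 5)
Your proof is correct and follows essentially the same route as the paper's: both reduce the claim to comparing $\theta_{D_{N/2}^\ast}(q^2)$ against $\vartheta_4(q^2)^{N/2}$ over the common denominator $\eta(q^2)^{N/2}$, invoking Lemma~\ref{lem: jacobidentities}\eqref{part: etaquo}, Lemma~\ref{lem: matchingcoeffsAD} to dispose of the $\vartheta_2(q^2)^{N/2}$ term, and the parity of $j^2$ to match $\vartheta_3(q^2)^{N/2}$ with $\vartheta_4(q^2)^{N/2}$ on even powers. Your extra steps (the Heisenberg determinant derivation of the twisted traces, the explicit check that neither lift has order doubling, and the remark that Lemma~\ref{lem: matchingcoeffsAD} only annihilates the even coefficients of $\vartheta_2(q^2)^{N/2}$ when $N\equiv 8\pmod{16}$) make explicit what the paper leaves implicit, but do not change the argument.
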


\begin{proof}
Consider $-\text{id} \in \Aut L$ and its standard lift to $\Aut V_L$. This automorphism has cycle type $1^{-N} 2^N$ with trivial fixed sublattice. Hence the character of $V_L^+$ is
\begin{equation}
    \Ch V_L^{+} = \frac{1}{2}\left(\frac{\theta_L(q)}{\eta(q)^N} + \frac{\eta(q)^N}{\eta(q^2)^N}\right),
\end{equation}
while the character $\Ch V^G$ is given by 
\begin{equation} \label{eqn:eta2}
    \Ch V^G = \frac{1}{2}\left(\frac{\theta_L(q)}{\eta(q)^N} + \frac{\theta_{D_{N/2}^\ast}(q^2)}{\eta(q^2)^{N/2}}\right).
\end{equation}

By Lemma~\ref{lem: matchingcoeffsAD}, the coefficients on the even powers of $q$ in the second term of \eqref{eqn:eta2} are equal to the coefficients on the even powers of $q$ in the theta quotient \[ \frac{\theta_{A_1^{N/2}}(q^2)}{\eta(q^2)^{N/2}} = \frac{\vartheta_3(q^2)^{N/2}}{\eta(q^2)^{N/2}} = \left(\frac{\vartheta_3(q^2)}{\eta(q^2)}\right)^{N/2}. \] 
By Lemma~\ref{lem: jacobidentities}\eqref{part: etaquo}, we have that 
\[ \frac{\eta(q)^N}{\eta(q^2)^N} = \left(\frac{\eta(q)^2}{\eta(q^2)^2}\right)^{N/2} = \left(\frac{\vartheta_4(q^2)}{\eta(q^2)}\right)^{N/2}.\]

Since $\vartheta_3(q^2) = \sum q^{j^2}$ and $\vartheta_4(q^2) = \sum (-1)^j q^{j^2}$ we therefore have agreement on the coefficients on the even powers of $q$, as desired.
\end{proof}

\begin{Proposition} \label{prop: coeffmatchingorderdoubled}
Suppose that $N \equiv 8 \bmod{16}$ and that there exists $g \in \Aut L$ of cycle type $2^{N/2}$ with associated fixed sublattice isometric to $A_1(2)^{N/2}$. Let $G \subset \Aut V_L$ be the subgroup generated by the standard lift of $g$ which has order doubling and kernel for order doubling denoted $L_0$. Then the coefficients on the even powers of $q$ in the character of the fixed subVOA $\Ch V_L^G$ agree with those of $\Ch V_{L_0}^+$.
\end{Proposition}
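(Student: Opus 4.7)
The plan is to mirror the proof of Proposition \ref{prop: coeffmatchingchars}, adapting it for the order doubling that now occurs. Since both $V_L$ and $V_{L_0}$ have central charge $N$, matching coefficients on even powers of $q$ amounts to showing the difference of the two characters has only odd-power contributions after factoring out the common $q^{-N/24}$.

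First, I would write out $\Ch V^G$ using the character formula from Section \ref{sec: charsfixedsubVOAS}. Since $g$ has order $2$ and lifts with order doubling, $\hat{g}$ has order $n=4$, so the sum runs over $j \in \{0,1,2,3\}$. For odd $j \in \{1,3\}$, $g^j = g$ and \eqref{eqn: modifiedlatticetheta} gives the ordinary lattice theta, which by Lemma \ref{lem: jacobidentities}\eqref{part: thetaA} equals $\theta_{A_1(2)^{N/2}}(\tau) = \vartheta_3(q^2)^{N/2}$, with denominator $\eta(q^2)^{N/2}$ coming from cycle type $2^{N/2}$. For $j=2$, since $g^2 = \mathrm{id}$ we have $L^{g^2} = L$, and \eqref{eqn: modifiedlatticetheta} yields the modified theta $2\theta_{L_0}(\tau) - \theta_L(\tau)$ divided by $\eta(q)^N$. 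After assembly the $\theta_L(q)$ contributions cancel, producing
\[
\Ch V^G = \frac{1}{2}\left[\frac{\theta_{L_0}(q)}{\eta(q)^N} + \frac{\vartheta_3(q^2)^{N/2}}{\eta(q^2)^{N/2}}\right].
\]

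Next, I would note that $L_0$ is an even, full-rank sublattice of $L$ closed under $-\mathrm{id}$, so $V_{L_0}^+$ is well-defined. Applying the computation at the start of the proof of Proposition \ref{prop: coeffmatchingchars} with $L_0$ in place of $L$ gives
\[
\Ch V_{L_0}^+ = \frac{1}{2}\left[\frac{\theta_{L_0}(q)}{\eta(q)^N} + \frac{\eta(q)^N}{\eta(q^2)^N}\right].
\]
Using Lemma \ref{lem: jacobidentities}\eqref{part: etaquo} to rewrite $\eta(q)^N/\eta(q^2)^N = \vartheta_4(q^2)^{N/2}/\eta(q^2)^{N/2}$, the $\theta_{L_0}$ terms cancel in the difference, yielding
\[
\Ch V^G - \Ch V_{L_0}^+ = \frac{\vartheta_3(q^2)^{N/2} - \vartheta_4(q^2)^{N/2}}{2\,\eta(q^2)^{N/2}}.
\]

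The key step, and the one playing the role analogous to Lemma \ref{lem: matchingcoeffsAD} in the previous proposition, is the parity observation that $\vartheta_3(q^2)^{N/2} - \vartheta_4(q^2)^{N/2}$ has only odd powers of $q$. From $\vartheta_3(q^2) = \sum_n q^{n^2}$ and $\vartheta_4(q^2) = \sum_n (-1)^n q^{n^2}$, the difference equals $2\sum_{\vec{n}\in\Z^{N/2}:\ \sum n_i\text{ odd}} q^{\sum n_i^2}$, and since $n^2 \equiv n \pmod 2$ we have $\sum n_i^2 \equiv \sum n_i \pmod 2$, so all nonzero terms have odd exponent. Because $1/\eta(q^2)^{N/2} = q^{-N/24}\cdot(\text{power series in }q^2)$, multiplication preserves the odd parity of the remaining exponents, so $\Ch V^G - \Ch V_{L_0}^+$ is $q^{-N/24}$ times a series in only odd powers of $q$, finishing the proof. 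I expect the main obstacle to lie not in this parity computation, but in correctly assembling $\Ch V^G$ via \eqref{eqn: modifiedlatticetheta}, since it is precisely the modified theta at $j=2$ that injects the $\theta_{L_0}$ on the left and makes the comparison with $\Ch V_{L_0}^+$ possible.
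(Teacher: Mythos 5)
Your proposal is correct and follows essentially the same route as the paper: you assemble $\Ch V_L^G$ from the four twisted traces (with the modified theta $2\theta_{L_0}-\theta_L$ at $j=2$ cancelling the $j=0$ term), compare against $\Ch V_{L_0}^+ = \tfrac12\bigl(\theta_{L_0}(q)/\eta(q)^N + \eta(q)^N/\eta(q^2)^N\bigr)$, and reduce to the parity comparison of $\vartheta_3(q^2)^{N/2}$ with $\vartheta_4(q^2)^{N/2}$ via Lemma~\ref{lem: jacobidentities}\eqref{part: etaquo}. Your write-up is in fact slightly more careful than the paper's (which compresses the last step into a reference to the end of Proposition~\ref{prop: coeffmatchingchars}), and your exponent $\eta(q)^N/\eta(q^2)^N$ is the consistent one.
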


\begin{proof}
    We must compare
    \begin{eqnarray*}
        \Ch V_L^G  & = & \frac{1}{4} \left( \frac{\theta_{A_1^{N/2}}(q^2)}{\eta(q^2)^{N/2}} +   \frac{2\theta_{L_0}(q) - \theta_L(q)}{\eta(q)^N} +  \frac{\theta_{A_1^{N/2}}(q^2)}{\eta(q^2)^{N/2}} +  \frac{\theta_L(q)}{\eta(q)^N} \right) \\ 
        & = & \frac{1}{2} \left( \frac{\theta_{A_1^{N/2}}(q^2)}{\eta(q^2)^{N/2}} +   \frac{\theta_{L_0}(q)}{\eta(q)^N}  \right) \\ 
    \end{eqnarray*}
    with the character
    \[ \Ch V_{L_0}^{+}  = \frac{1}{2} \left( \frac{\eta(q)^{N/2}}{\eta(q^2)^{N/2}} +   \frac{\theta_{L_0}(q)}{\eta(q)^N}  \right), \]
    where the element $-\text{id} \in \Aut L_0 \subset \Aut L$ has cycle type $1^{-N} 2^N$ with trivial fixed sublattice. The result then follows from the argument at the end of Proposition~\ref{prop: coeffmatchingchars}.
\end{proof}

We also consider analogous results for the lattice theta quotients associated to these two cycle types. We prove that their theta quotients share coefficients, both with each other and with characters of other known lattice-VOAs. Figure~\ref{fig: etaquos} below records the theta quotients of the fixed sublattices by automorphisms with cycle types $2^{N/2}_{rep}$ and $2^{N/2}_{nr}$ for several low rank code-lattices. \smallskip
\begin{center}
\renewcommand{\arraystretch}{1.25}
\begin{tabular}{|c|c|c|l|}
\hline
Lattice & Cycle type & Sublattice & Theta quotient \\ \hline
\multirow{4}{*}{$E_8$} & \multirow{2}{*}{$2^4_{rep}$} & \multirow{2}{*}{$A_1(2)^4$} & $q^{-\frac{1}{3}}(1 + 8q + 
    \textbf{28}q^2 + 64q^3 + 
    \textbf{134}q^4 + 288q^5$\\ &&& $+\, 
    \textbf{568}q^6+ \dots)$ \\ \cline{2-4}
    \multirow{2}{*}{} & \multirow{2}{*}{$2^4_{nr}$} & \multirow{2}{*}{$D_4^\ast(2)$} & $q^{-\frac{1}{3}}(1 + 24q + 
    \textbf{28}q^2 + 192q^3 + 
    \textbf{134}q^4 + 864q^5$\\ &&& $+\, 
    \textbf{568}q^6+ \dots)$ \\ \hline \hline
    \multirow{4}{*}{$D_{16}^+$} & \multirow{2}{*}{$2^8_{rep}$} & \multirow{2}{*}{$A_1(2)^8$} &$q^{-\frac{2}{3}}(1+\textbf{16}q + {120}q^2 + \textbf{576}q^3 +
    {2076}q^4 + \textbf{6304}q^5$ \\ &&& $ \, + {17344}q^6+ \dots )$ \\ \cline{2-4} 
    \multirow{2}{*}{} & \multirow{2}{*}{$2^8_{nr}$} & \multirow{2}{*}{$D_8^\ast(2)$} & $q^{-\frac{2}{3}}(1+\textbf{16}q + 376q^2 + \textbf{576}q^3 +
    6172q^4 + \textbf{6304}q^5$ \\ &&& $+\, 52160q^6 + \dots)$ \\ \hline \hline
    \multirow{4}{*}{$N(D_{12}^2)$} & \multirow{2}{*}{$2^{12}_{rep}$} & \multirow{2}{*}{$A_1(2)^{12}$} & $q^{-1}(1 + 24q + \textbf{276}q^2 + 2048q^3 + \textbf{11202}q^4 +
    49152q^5$ \\ & & & $+ \, \textbf{184024}q^6 +  614400q^7 + \textbf{1881471}q^8 +\dots)$ \\ \cline{2-4}
    {} & \multirow{2}{*}{$2^{12}_{nr}$} & \multirow{2}{*}{$D_{12}^\ast(2)$} & $q^{-1}(1 + 24q + \textbf{276}q^2 + 6144q^3 + \textbf{11202}q^4 +
    147456q^5$ \\ & & & $+\, \textbf{184024}q^6 + 1843200q^7 + \textbf{1881471}q^8 +
    \dots)$\\ \hline
\end{tabular} \captionof{figure}{Theta quotients of fixed sublattices by automorphisms with $2^{N/2}$ cycle type}
\label{fig: etaquos}\end{center}
\clearpage
\begin{Proposition} \label{prop: matchingcoeffs}  \,
\begin{enumerate}[itemsep = .3 em]
    \item \label{part: matching} The theta quotients for the fixed sublattices associated to elements with cycle types $2^{N/2}_{rep}$ and $2^{N/2}_{nr}$ in $\Aut L$ have the same coefficients on even (resp. odd) powers of $q$ when $N \equiv 8 \bmod{16}$ (resp. $0 \bmod{16}$).
    \item \label{part: evenmatching} The character $\Ch V_{D_{N/2}}(q^2)$ of the $D_{N/2}(2)$ lattice-VOA and the theta quotients associated to the cycle types $2^{N/2}_{rep}$ and $2^{N/2}_{nr}$ have the same coefficients on even powers of $q$ for $N \equiv 8 \bmod{16}$.
\end{enumerate}
\end{Proposition}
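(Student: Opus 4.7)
The plan is to reduce both parts to elementary parity analyses of $q$-expansions. The key unifying observation is that $\eta(q^2)^{N/2} = q^{N/24}\prod_{n\geq 1}(1-q^{2n})^{N/2}$ equals $q^{N/24}$ times a power series in $q^2$; consequently, multiplication or division by $\eta(q^2)^{N/2}$ preserves the parity of the integer exponent of $q$, modulo the common $q^{\pm N/24}$ shift that the theta quotients and characters share.

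For Part~(\ref{part: matching}), Theorem~\ref{thm: rA1^N/r} (with $r=2$) and Proposition~\ref{prop:2DN/2*} identify the fixed-sublattice theta series as $\vartheta_3(q^2)^{N/2}$ and $\vartheta_3(q^2)^{N/2}+\vartheta_2(q^2)^{N/2}$, respectively. Dividing each by the common eta product $\eta_g(q)=\eta(q^2)^{N/2}$ and subtracting, one is left with the difference $\vartheta_2(q^2)^{N/2}/\eta(q^2)^{N/2}$. Since $(n+\tfrac12)^2 = n(n+1)+\tfrac14$ and $n(n+1)$ is always even, $\vartheta_2(q^2)^{N/2}$ equals $q^{N/8}$ times a power series in $q^2$, so this difference equals $q^{N/12}$ times a power series in $q^2$. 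Writing the theta quotients in the shared normalization $q^{-N/24}\sum_k a_k q^k$, the nonzero coefficients of their difference occur precisely at $k = N/8 + 2\ell$ for $\ell\geq 0$, whose parity is that of $N/8$; this gives agreement of the $a_k$ on even $k$ when $N/8$ is odd, i.e., $N\equiv 8\pmod{16}$, and on odd $k$ when $N/8$ is even, i.e., $N\equiv 0\pmod{16}$.

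For Part~(\ref{part: evenmatching}), Part~(\ref{part: matching}) already gives even-power agreement of the two theta quotients when $N\equiv 8\pmod{16}$, so it suffices to show that $\Ch V_{D_{N/2}}(q^2) = \theta_{D_{N/2}}(q^2)/\eta(q^2)^{N/2}$ agrees with the $rep$ theta quotient $\vartheta_3(q^2)^{N/2}/\eta(q^2)^{N/2}$ on even powers of $q$. Realizing $D_{N/2}\subset\Z^{N/2}$ as the index-two sublattice of vectors with even coordinate sum, I would compute
\[
\vartheta_3(q^2)^{N/2} - \theta_{D_{N/2}}(q^2) \;=\; \sum_{\substack{(n_1,\ldots,n_{N/2})\in\Z^{N/2}\\ n_1+\cdots+n_{N/2}\text{ odd}}} q^{\,n_1^2+\cdots+n_{N/2}^2},
\]
and note that $n_i^2\equiv n_i\pmod 2$ forces every exponent on the right to be odd. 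Dividing by $\eta(q^2)^{N/2}$ preserves this odd-exponent support modulo the $q^{-N/24}$ shift, so the even-power coefficients of $\Ch V_{D_{N/2}}(q^2) - \vartheta_3(q^2)^{N/2}/\eta(q^2)^{N/2}$ vanish; combining with Part~(\ref{part: matching}) then yields the required three-way agreement.

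The main conceptual step in both parts is the parity observation for the relevant theta-series difference, namely that $n(n+1)$ is always even (which controls $\vartheta_2(q^2)^{N/2}$) and that $n^2\equiv n\pmod 2$ (which controls $\vartheta_3(q^2)^{N/2}-\theta_{D_{N/2}}(q^2)$). These are elementary once identified, and the remaining verification using eta and theta series relationships is routine; I anticipate no significant obstacle.
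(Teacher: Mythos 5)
Your proposal is correct and follows essentially the same route as the paper: part~(1) reduces to the parity of the exponents of $\vartheta_2(q^2)^{N/2}$ via $y(y+1)\in 2\Z$ (the content of Lemma~\ref{lem: matchingcoeffsAD}), and part~(2) reduces to the observation that $\vartheta_3(q^2)^{N/2}-\theta_{D_{N/2}}(q^2)$ is supported on odd exponents because $\sum x_i^2\equiv\sum x_i\pmod 2$. Your explicit bookkeeping of the $q^{-N/24}$ shift and of why division by the common denominator $\eta(q^2)^{N/2}$ preserves parity is a slightly more careful presentation of a step the paper leaves implicit, but it is not a different argument.
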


\begin{proof}
    \eqref{part: matching} The fixed sublattices are both associated to a conjugacy class of element in $\Aut L$ with cycle type $2^{N/2}$. Hence the two theta quotients have the same denominator, namely $\eta(q^2)^{N/2}$. Thus it suffices to compare the coefficients of the corresponding theta series, i.e., $\theta_{A_1^{N/2}}(q^2)$ and $\theta_{D_{N/2}^\ast}(q^2)$. This is the content of Lemma~\ref{lem: matchingcoeffsAD}. 
    
    \noindent \eqref{part: evenmatching} Note that the character of the $D_{N/2}(2)$ lattice-VOA has denominator $\eta(q^2)^{N/2}$, thus again we need only compare the theta series of this lattice to the theta series of $A_1(2)^{N/2}$. Let $a_k$ denote the coefficient on $q^k$ in the theta series of the lattice $D_{N/2}(2)$. Then \[a_k = \# \left\{ (x_1, \dots, x_{N/2}) \in \Z^{N/2} : \sum_i x_i \equiv 0 \bmod{2} \text{ and } \sum x_i^2 = k\right\}. \] Since $\sum x_i^2$ and $\sum x_i$, have the same parity, $a_k$ is nonzero if and only if $k$ is even. We see from the proof of \eqref{part: matching} that these coefficients agree with the corresponding coefficients in the theta series of $A_1(2)^{N/2}$ and $D_{N/2}^\ast(2)$, as desired.
\end{proof}

\begin{remark} The Golay code $\calG$ has exactly one conjugacy class of automorphism with cycle type $2^{12}$. For both $N(A_1^{24})$ and $\Lambda_{24}$ the corresponding theta quotient is the replicable function $T_{4A}(q)$. In the case of $\Lambda_{24}$, the fixed sublattice is isometric to the lattice $2D_{12}^+$ and the standard lift of the lattice automorphism to the Leech lattice-VOA has order doubling. The kernel for order doubling is an index $2$ sublattice of $\Lambda_{24}$ with theta series \[\theta_{L_0}(q) = 1 + 98256q^2 + 8384512q^3 + 199066704q^4 + 2314125312q^5 +O(q^6).\]
\end{remark}

\begin{Theorem*}[Theorem~\ref{thm: etaquoandcharidentities}]
Suppose there exists an automorphism $g_1 \in \Aut L$ with cycle type $2^{N/2}$ and fixed sublattice isometric to $A_1(2)^{N/2}$. Let $L_0$ denote the kernel for order doubling associated to the lift of $g_1$ to the VOA automorphism $\hat{g_1} \in \Aut V_L$. Then 
\begin{equation}\label{eqn: repetaidentity}
    \frac{\theta_{L^{g_1}}(q)}{\eta(q^2)^{N/2}} = \Ch V_L^{\hat{g_1}}(q) - \Ch V_{L_0}^{+}(q) + \Ch V_{D_{N/2}}(q^2).   
\end{equation}
Moreover, if there exists an automorphism $g_2 \in \Aut L$ with cycle type $2^{N/2}$ and fixed sublattice isometric to $D_{N/2}^\ast(2)$, then the characters of the fixed point subVOAs are related via
 \begin{equation} \label{eqn: nretaidentity}
     \frac{\theta_{L^{g_2}}(q)}{\eta(q^2)^{N/2}}  = 2(\Ch V_L^{\hat{g_2}}(q) - \Ch V_L^{+}(q) ) - \bigl(\Ch V_L^{\hat{g_1}}(q) - \Ch V_{L_0}^{+}(q)\bigr) + \Ch V_{D_{N/2}}(q^2).    
\end{equation}  
\end{Theorem*}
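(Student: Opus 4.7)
The plan is to expand each character on the right-hand side via the trace formula from Section~\ref{sec: charsfixedsubVOAS} and then assemble the resulting eta quotients. First I would handle identity~\eqref{eqn: repetaidentity}. Since $g_1$ has cycle type $2^{N/2}$ and its standard lift $\hat{g_1}$ exhibits order doubling, $\hat{g_1}$ has order $4$, so
\[\Ch V_L^{\hat{g_1}} = \frac{1}{4}\sum_{j=0}^{3} \frac{\theta_{L^{g_1^j}, w_j}(q)}{\eta_{g_1^j}(q)}.\]
The $j=0$ term is $\theta_L(q)/\eta(q)^N$; the $j=1$ and $j=3$ terms each equal $\theta_{L^{g_1}}(q)/\eta(q^2)^{N/2}$ (since $j$ is odd and $g_1^3=g_1$); and the $j=2$ term contributes $(2\theta_{L_0}(q)-\theta_L(q))/\eta(q)^N$ by equation~\eqref{eqn: modifiedlatticetheta}. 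These combine to
\[\Ch V_L^{\hat{g_1}} = \frac{1}{2}\left(\frac{\theta_{L_0}(q)}{\eta(q)^N} + \frac{\theta_{L^{g_1}}(q)}{\eta(q^2)^{N/2}}\right),\]
which also appears in the proof of Proposition~\ref{prop: coeffmatchingorderdoubled}.

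Next, mirroring the derivation of Proposition~\ref{prop: coeffmatchingchars} with $L_0$ in place of $L$ (and noting that $-\mathrm{id}$ has trivial fixed sublattice and never exhibits order doubling on an even lattice by Corollary~\ref{cor: orderdoubling}), I obtain
\[\Ch V_{L_0}^+ = \frac{1}{2}\left(\frac{\theta_{L_0}(q)}{\eta(q)^N} + \frac{\eta(q)^N}{\eta(q^2)^N}\right),\]
so that
\[\Ch V_L^{\hat{g_1}} - \Ch V_{L_0}^+ = \frac{1}{2}\left(\frac{\theta_{L^{g_1}}(q)}{\eta(q^2)^{N/2}} - \frac{\eta(q)^N}{\eta(q^2)^N}\right).\]
Separately, $L^{g_1}\cong A_1(2)^{N/2}$ gives $\theta_{L^{g_1}}(q)=\vartheta_3(q^2)^{N/2}$ and Lemma~\ref{lem: jacobidentities}\eqref{part: etaquo} yields $\eta(q)^N/\eta(q^2)^N = \vartheta_4(q^2)^{N/2}/\eta(q^2)^{N/2}$; combining these with the classical identity $\theta_{D_N}(q) = \tfrac{1}{2}(\vartheta_3(q)^N+\vartheta_4(q)^N)$ produces
\[\Ch V_{D_{N/2}}(q^2) = \frac{\theta_{D_{N/2}}(q^2)}{\eta(q^2)^{N/2}} = \frac{1}{2}\left(\frac{\theta_{L^{g_1}}(q)}{\eta(q^2)^{N/2}} + \frac{\eta(q)^N}{\eta(q^2)^N}\right).\]
Adding the previous two displays yields $\theta_{L^{g_1}}(q)/\eta(q^2)^{N/2}$, establishing \eqref{eqn: repetaidentity}.

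For identity~\eqref{eqn: nretaidentity}, the same approach applies and is simpler because $\hat{g_2}$ has order $2$ (no order doubling, verifiable via Corollary~\ref{cor: orderdoubling} from the explicit structure of $L^{g_2}\cong D_{N/2}^\ast(2)$). This gives
\[\Ch V_L^{\hat{g_2}} - \Ch V_L^+ = \frac{1}{2}\left(\frac{\theta_{L^{g_2}}(q)}{\eta(q^2)^{N/2}} - \frac{\eta(q)^N}{\eta(q^2)^N}\right).\]
Substituting this together with the formulas derived above for $\Ch V_L^{\hat{g_1}} - \Ch V_{L_0}^+$ and $\Ch V_{D_{N/2}}(q^2)$ into the right-hand side of \eqref{eqn: nretaidentity}, the $\eta(q)^N/\eta(q^2)^N$ and $\theta_{L^{g_1}}(q)/\eta(q^2)^{N/2}$ contributions cancel, leaving $\theta_{L^{g_2}}(q)/\eta(q^2)^{N/2}$.

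The main obstacle I anticipate is the careful bookkeeping of the $j=2$ twisted theta function in the first identity, since order doubling inserts the kernel $L_0$ and the sign character $w_j$ into the character formula; after that, the argument reduces to algebraic manipulations of Jacobi theta and Dedekind eta identities supplied by Lemma~\ref{lem: jacobidentities} or standard references. Verifying that $\hat{g_2}$ indeed does not undergo order doubling can be handled by checking $\langle\alpha,g_2\alpha\rangle\in 2\Z$ for all $\alpha\in L$ directly from the code-lattice description of Section~\ref{sec: codelattices}, which should be straightforward given the sublattice isometry to $D_{N/2}^\ast(2)$.
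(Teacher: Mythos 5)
Your proposal is correct and takes essentially the same route as the paper: both expand each character on the right-hand side into its explicit theta-series-over-eta-product form (using \eqref{eqn: modifiedlatticetheta} for the order-doubled $\hat{g_1}$, the identity $\theta_{D_{N/2}}(q)=\tfrac12(\vartheta_3(q)^{N/2}+\vartheta_4(q)^{N/2})$, and Lemma~\ref{lem: jacobidentities}\eqref{part: etaquo}) and then verify the identities by direct algebraic cancellation. The only cosmetic difference is that you group the terms into the differences $\Ch V_L^{\hat{g_1}}-\Ch V_{L_0}^{+}$ and $\Ch V_L^{\hat{g_2}}-\Ch V_L^{+}$ before substituting the Jacobi theta expressions, whereas the paper substitutes everything at once; your explicit bookkeeping of the $j=2$ twisted term and your remark that the absence of order doubling for $\hat{g_2}$ must be checked (which the paper handles via Proposition~\ref{prop:2DN/2*} and Theorem~\ref{thm: orddoubcode}) are welcome additions rather than deviations.
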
 

\begin{proof} By Lemma~\ref{lem: jacobidentities}\eqref{part: etaquo}, we have 
\[ \frac{\eta(q)^N}{\eta(q^2)^N} = \left(\frac{\vartheta_4(q^2)}{\eta(q^2)}\right)^{N/2}.\]

Then, using the definitions of the theta series for $A_1(2)^{N/2}$ and $D_{N/2}^\ast(2)$ given in Lemma~\ref{lem: jacobidentities}\eqref{part: thetaA} and \eqref{part: thetaD}, respectively, the right hand side of \eqref{eqn: nretaidentity} is 
\begin{align*}
    & = 2 \cdot \frac{1}{2} \left( \frac{\vartheta_3(q^2)^{N/2} + \vartheta_2(q^2)^{N/2}}{\eta(q^2)^{N/2}} + \frac{\theta_L(q)}{\eta(q)^N}\right) 
    - 2 \cdot \frac{1}{2} \left( \frac{\theta_L(q)}{\eta(q)^N} + \frac{\vartheta_4(q^2)^{N/2}}{\eta(q^2)^{N/2}} \right) \\
    & \quad - \frac{1}{2}\left( \frac{\vartheta_3(q^2)^{N/2}}{\eta(q^2)^{N/2}} + \frac{\theta_{L_0}(q)}{\eta(q)^N}\right) +
     \frac{1}{2} \left(\frac{\theta_{L_0}(q)}{\eta(q)^N} + \frac{\vartheta_4(q^2)^{N/2}}{\eta(q^2)^{N/2}}\right) + \frac{1}{2}\left( \frac{\vartheta_3(q^2)^{N/2} + \vartheta_4(q^2)^{N/2}}{\eta(q^2)^{N/2}} \right) \\
    & = \frac{\vartheta_3(q^2)^{N/2} + \vartheta_2(q^2)^{N/2}}{\eta(q^2)^{N/2}} = \frac{\theta_{L^{g_2}}(q)}{\eta(q^2)^{N/2}}.
\end{align*}
Similarly, the right hand side of equation \eqref{eqn: repetaidentity} is 
\begin{align*}
   & = \frac{1}{2}\left( \frac{\vartheta_3(q^2)^{N/2}}{\eta(q^2)^{N/2}} + \frac{\theta_{L_0}(q)}{\eta(q)^N}\right)  - \frac{1}{2} \left(\frac{\theta_{L_0}(q)}{\eta(q)^N} + \frac{\vartheta_4(q^2)^{N/2}}{\eta(q^2)^{N/2}}\right) + \frac{1}{2}\left( \frac{\vartheta_3(q^2)^{N/2} + \vartheta_4(q^2)^{N/2}}{\eta(q^2)^{N/2}} \right) \\
   & = \frac{\vartheta_3(q^2)^{N/2}}{\eta(q^2)^{N/2}} = \frac{\theta_{L^{g_1}}(q)}{\eta(q^2)^{N/2}}.
\end{align*}
\end{proof}
The identity \eqref{eqn: repetaidentity} also holds for the Leech lattice, up to a constant. To the authors' knowledge, a statement analogous to Theorem~\ref{thm: etaquoandcharidentities} does not easily generalize to other cycle types. 

\subsection{Code theoretic characterization of order doubling}
Certain properties of the code-lattices and lattice-VOAs from Section~\ref{subsec: charsfixedsubVOAs} are detectable by the automorphism groups and fixed subcodes of the linear code. Let $C$ be a doubly even self-dual linear code of length $N$, and assume that $L$ is the code-lattice of $C$ as constructed in Proposition \ref{prop:tasaka}. That is, $L$ is the lattice generated by $\{ \alpha_i, \frac{1}{2}\alpha_B : 1 \le i \le N, B \in C\}$, where $\{\alpha_i\}_{i=1}^N$ is a basis for $\R^N$ such that $\langle \alpha_i, \alpha_j \rangle = 2 \delta_{i,j}$. We again view codewords $B$ in the lens of Section~\ref{subsec: equivconst}, that is, as a subset $B \subset \{1, \dots, N\}$ encoding the indices of $B$ for which the coordinate is $1$, with addition defined by equation \eqref{eqn: codewordaddition} as the symmetric difference of sets.

The following theorem gives a characterization, in terms of a property of the code $C$, of when an even order lattice automorphism $g$ lifts to an automorphism $\hat{g}$ of the lattice-VOA $V_L$ that has order doubling. 

\begin{Theorem} \label{thm: orddoubcode}Suppose $\gbar \in \Aut C$ has even order and let $g = \iota(\gbar)$ be the image of $\gbar$ in $\Aut L$. The standard lift of $g \in \Aut L$ to an automorphism $\hat{g}$ of the lattice-VOA $V_L$ does not have order doubling if and only $|B \cap \gbar^{\text{ord}(g)/2}(B)| \equiv 0 \bmod{4}$ for all $B \in C$.
\end{Theorem}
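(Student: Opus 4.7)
By Corollary~\ref{cor: orderdoubling}, letting $m := \mathrm{ord}(g)$, the lift $\hat g$ has order $m$ (no doubling) exactly when $\langle \alpha, g^{m/2}\alpha\rangle \in 2\Z$ for every $\alpha \in L$, so the task is to re-express this condition in terms of the codewords of $C$. I would set $h := \gbar^{m/2} \in \Aut C$ and record the key structural fact that $h^2 = \gbar^m = \mathrm{id}$, so $h$ is an involution and $h = h^{-1}$. This involutory property will be the hinge of the argument.

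Next I would write an arbitrary $\alpha \in L$ in the form of Proposition~\ref{prop:tasaka} as $\alpha = \tfrac12 \alpha_B + \sum_i x_i \alpha_i$ with $B \in C$ and $x_i \in \Z$, so that $g^{m/2}\alpha = \tfrac12 \alpha_{h(B)} + \sum_i x_i \alpha_{h(i)}$ by~\eqref{eqn:gaction}. Expanding $\langle \alpha, g^{m/2}\alpha\rangle$ bilinearly, using $\langle \alpha_i,\alpha_j\rangle = 2\delta_{i,j}$ and tracking the four cross terms, simplifies to
\[
\langle \alpha, g^{m/2}\alpha\rangle \;=\; \tfrac12 |B \cap h(B)| \;+\; \sum_{i \in h^{-1}(B)} x_i \;+\; \sum_{i \in h(B)} x_i \;+\; 2\sum_j x_j x_{h(j)}.
\]
The decisive observation is that because $h^{-1} = h$, the two linear sums coincide and combine to $2\sum_{i \in h(B)} x_i \in 2\Z$, while the final quadratic term is manifestly in $2\Z$. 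Hence modulo $2\Z$, only $\tfrac12 |B \cap h(B)|$ survives, and this quantity is independent of the free integer parameters $x_i$.

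Finally, since the codeword $B$ of $\alpha$ ranges over all of $C$ as $\alpha$ ranges over $L$ (e.g., take $\alpha = \tfrac12 \alpha_B$), the condition $\langle \alpha, g^{m/2}\alpha\rangle \in 2\Z$ for every $\alpha \in L$ is equivalent to $\tfrac12 |B \cap h(B)| \in 2\Z$ for every $B \in C$, i.e., $|B \cap \gbar^{m/2}(B)| \equiv 0 \pmod 4$. The main obstacle is simply the careful bookkeeping in the bilinear expansion; the essential insight is that $h$ being an involution collapses the potentially $x_i$-dependent terms modulo $2\Z$, which is precisely what makes a purely code-theoretic criterion able to detect order doubling.
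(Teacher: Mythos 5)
Your proposal is correct and follows essentially the same route as the paper: reduce via Corollary~\ref{cor: orderdoubling} to the parity of $\langle \alpha, g^{\mathrm{ord}(g)/2}\alpha\rangle$, expand bilinearly in the Tasaka coordinates, and use that $\gbar^{\mathrm{ord}(g)/2}$ is an involution to see that the cross terms and quadratic term land in $2\Z$, leaving only $\tfrac12|B\cap h(B)|$. The paper merely packages the computation as a separate lemma for order-$2$ automorphisms (Lemma~\ref{lem: order2autorddoubling}); your direct handling of the two linear sums is, if anything, slightly cleaner than the paper's bookkeeping.
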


Since $g^{\text{ord}(g)/2}$ has order $2$, Corollary \ref{cor: orderdoubling} characterizes when $\hat{g}$ does not have order doubling in terms of the parity of $\langle \alpha, \gbar^{\text{ord}(g)/2}(\alpha) \rangle$ for all $\alpha \in L$. Thus the proof of Theorem~\ref{thm: orddoubcode} is an immediate consequence of the following lemma.

\begin{Lemma} \label{lem: order2autorddoubling} 
Suppose $\gbar \in \Aut C$ has order $2$ and let $g = \iota(\gbar)$ be the image of $\gbar$ in $\Aut L$. Then for all $B \in C$ we have $|B \cap g(B)| \equiv 0 \bmod{4}$ if and only if $\langle \alpha, g(\alpha) \rangle \in 2\Z$ for all $\alpha \in L$.
\end{Lemma}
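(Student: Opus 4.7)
The plan is to exploit the explicit parametrization of $L$ from Proposition~\ref{prop:tasaka}: every $\alpha \in L$ has the form $\alpha = \frac{1}{2}\alpha_B + v$ with $B \in C$ and $v = \sum_i x_i \alpha_i \in L'$ for some $x_i \in \Z$. Since $g$ is linear with $g(\alpha_i) = \alpha_{\gbar(i)}$, we have $g(\alpha) = \frac{1}{2}\alpha_{\gbar(B)} + \sum_i x_i\alpha_{\gbar(i)}$. The idea is then to expand $\langle \alpha, g(\alpha)\rangle$ bilinearly into four summands and identify the unique summand that can fail to be an integer, expressing it in terms of $|B \cap \gbar(B)|$.

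The key step is a careful bookkeeping modulo $2$ of the four pieces, using $\langle \alpha_i,\alpha_j\rangle = 2\delta_{i,j}$. The pure $L'$-piece contributes $2\sum_i x_i x_{\gbar(i)} \in 2\Z$. The two mixed pieces $\frac{1}{2}\langle \alpha_B, \sum_i x_i \alpha_{\gbar(i)}\rangle$ and $\frac{1}{2}\langle \sum_i x_i\alpha_i, \alpha_{\gbar(B)}\rangle$ evaluate to $\sum_i x_i[\gbar(i)\in B]$ and $\sum_i x_i[i\in\gbar(B)]$, respectively. Here I would use the order-$2$ hypothesis crucially: $\gbar = \gbar^{-1}$ implies $[\gbar(i)\in B] = [i \in \gbar(B)]$, so the mixed contribution collapses to $2\sum_i x_i[i\in\gbar(B)] \in 2\Z$. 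The only remaining summand is $\tfrac{1}{4}\langle \alpha_B,\alpha_{\gbar(B)}\rangle = \tfrac{1}{2}|B\cap\gbar(B)|$.

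Assembling these observations yields the congruence $\langle \alpha, g(\alpha)\rangle \equiv \tfrac{1}{2}|B\cap\gbar(B)| \pmod{2}$, from which both implications follow at once. For ($\Leftarrow$) of the lemma as stated, if every $\alpha\in L$ gives an even inner product, specialize to $\alpha = \tfrac{1}{2}\alpha_B$ for arbitrary $B \in C$; then $\langle\alpha,g(\alpha)\rangle = \tfrac{1}{2}|B\cap\gbar(B)|$, forcing $|B\cap\gbar(B)| \equiv 0 \pmod 4$. For ($\Rightarrow$), the congruence on $|B\cap\gbar(B)|$ makes $\tfrac{1}{2}|B\cap\gbar(B)|$ itself even, and combining with the even contributions from the other three summands gives $\langle \alpha,g(\alpha)\rangle \in 2\Z$ for every $\alpha \in L$.

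There is no substantial obstacle here; the only subtlety is recognizing that the two mixed terms, which individually are merely integers, combine to an \emph{even} integer precisely because $\gbar$ is an involution. This is also the only place the order-$2$ hypothesis is used, and it is what makes the half-integral contribution $\tfrac{1}{2}|B\cap\gbar(B)|$ the sole obstruction to integrality modulo $2$. Once this is noted, the proof is a direct bilinear expansion, and Theorem~\ref{thm: orddoubcode} then follows immediately by applying the lemma to $g^{\mathrm{ord}(g)/2}$ together with Corollary~\ref{cor: orderdoubling}.
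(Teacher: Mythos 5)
Your proof is correct and follows essentially the same route as the paper's: expand $\langle \alpha, g(\alpha)\rangle$ bilinearly, observe that the pure $L'$ term is even, use the involution property of $\gbar$ to pair the two mixed terms into an even contribution, and isolate $\tfrac{1}{4}\langle\alpha_B,\alpha_{\gbar(B)}\rangle = \tfrac{1}{2}|B\cap\gbar(B)|$ as the sole obstruction. Your write-up is in fact slightly cleaner in two respects — the term-by-term identity $[\gbar(i)\in B]=[i\in\gbar(B)]$ shows the two mixed sums are literally equal rather than merely of the same parity, and you make the converse direction explicit by specializing to $\alpha=\tfrac{1}{2}\alpha_B$ — but these are refinements of the same argument, not a different one.
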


\begin{proof}
For an arbitrary $\alpha \in L$ we may write $\alpha = \frac{1}{2} \alpha_B + \sum_{i=1}^N x_i \alpha_i$, for some $B \in C$. Let $\mathcal{I} \subset \{1, \dots, N\}$ denote the set of indices where $x_i \ne 0$. By equation \eqref{eqn:gaction}, the action of $g$ on $\alpha$ is given by $g(\alpha) = \frac{1}{2} \alpha_{\gbar(B)} + \sum_{i=1}^N x_i \alpha_{\gbar(i)}$. Thus we find 
\begin{align} \label{eqn: innerprod} \langle \alpha, g(\alpha) \rangle &= \begin{aligned}[t] & \frac{1}{4} \langle \alpha_B, \alpha_{\gbar(B)}\rangle + \frac{1}{2}\left( \sum_{i=1}^N \langle \alpha_B,  x_i \alpha_{\gbar(i)} \rangle + \sum_{i=1}^N \langle \alpha_{\gbar(B)}, x_i \alpha_i \rangle \right) \\&+\, \sum_{i=1}^N \sum_{j=1}^N \langle x_i \alpha_i , x_j \alpha_{\gbar(j)} \rangle 
\end{aligned}
\end{align}

We claim that the parity of $\langle \alpha, g(\alpha) \rangle$ is completely determined by whether $|B \cap \gbar(B)|$ is $0$ or $2$ modulo $4$. Since $\langle \alpha_i, \alpha_j \rangle = 2\delta_{i,j}$, the last term of \eqref{eqn: innerprod} is always even. If $\alpha \in L^{g}$ then the first two terms in \eqref{eqn: innerprod} simplify to
$ \frac{1}{4} \alpha_B^2 + \langle \alpha_B, \sum_{i=1}^N x_i \alpha_i \rangle \in 2\Z$
since $C$ is doubly even, therefore each codeword $B$ has weight $|B| \equiv 0 \bmod{4}$. 

So suppose that $\alpha \not \in L^g$.
Given any two subsets $X,Y \subset \{1, \dots, N\}$ we have $\gbar(X \cap Y) = \gbar(X) \cap \gbar(Y)$. Since $\gbar$ has order $2$ in $\Aut C$, we have that $B \cap \gbar(\mathcal{I}) = \gbar(\gbar(B) \cap \mathcal{I})$. Since $\gbar$ is a bijection, this implies $|B \cap \gbar(\mathcal{I})| =  |\gbar(B) \cap \mathcal{I}|$. 
This shows that the the second term of equation~\eqref{eqn: innerprod} is always even since the parity of the two sums is the same, determined by $|B \cap \gbar(\mathcal{I})|$. Thus the parity of $\langle \alpha, g(\alpha) \rangle$ is determined only by the first term of equation \eqref{eqn: innerprod}, i.e. by $\frac{1}{4} \langle \alpha_B, \alpha_{\gbar(B)} \rangle = \frac{1}{4} \cdot 2^{| B \cap \gbar(B)|}$. The statement of the lemma then follows.
\end{proof}
For codes $C$ of length $N \le 32$, when there exists an automorphism $\gbar$ with cycle type $2^{N/2}$ and fixed subcode satisfying the hypotheses of Theorem~\ref{thm: rA1^N/r} with $r=2$, then there are codewords $B \in C$ such $|B \cap g(B)| \equiv 2 \bmod{4}$. Hence by Theorem \ref{thm: orddoubcode}, the lift $\hat{g}$ must have order doubling. There appear to be many more cycle types which give rise to lattice automorphisms whose fixed sublattices have corresponding theta quotients that are not replicable functions. However, whenever there exists $\gbar \in \Aut C$ with cycle type $2^{N/2}$ and $C^\gbar$ satisfies the assumptions of Proposition~\ref{prop:2DN/2*}, 
each codeword $B \in C$ satisfies $|B \cap g(B)| \equiv 0 \bmod{4}$. Theorem \ref{thm: orddoubcode} implies that the standard lift of $g$ to $\hat{g} \in \Aut V_L$ does not have order doubling.

\section{Decomposition of VOA characters} \label{sec: decompVOAchars}
Since we have considered theta quotients coming from fixed sublattices under both cyclic and non-cyclic subgroups, we can also study characters of subVOAs fixed by noncyclic subgroups. This is more straightforward when none of the elements have order doubling, in which case we observe some decomposition of the fixed subVOA characters based on the group structure. It is of course possible to compute such characters for non-cyclic subgroups when order doubling occurs, however, the analogues of the following propositions are more complicated in those cases since the group structure as an subgroup of automorphism group of the lattice is not preserved (see, for example Section 4 of \cite{GK19}). Therefore, throughout this section, we assume all lifts of lattice automorphisms do not have order doubling.

\begin{Theorem*}[Theorem \ref{thm: charpq}] Let $p$ and $q$ be primes such that $q>p$ and $q\equiv 1 \pmod{p}$ and let $\Z_q \rtimes \Z_p $ be a subgroup of the automorphism group of an even positive definite lattice $L$. The characters of the fixed point subVOAs of the lattice-VOA $V_L$ satisfy the following relation
\[ p\Ch V^{\Z_q \rtimes \Z_p}= \Ch V^{\Z_q} +  p\Ch V^{\Z_p} - \Ch V_{L}. \]
\end{Theorem*}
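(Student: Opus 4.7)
The plan is to apply the standard averaging formula for the character of a fixed point subVOA, namely
\[ \Ch V^H = \frac{1}{|H|} \sum_{h \in H} \tr(h \mid V), \]
to each of the four subgroups appearing in the identity and to carefully account for the elements of $G := \Z_q \rtimes \Z_p$ by partitioning them according to their orders. Since we are assuming no lifts exhibit order doubling, every trace $\tr(h \mid V)$ can be read off directly from the theta quotient $\theta_{L^h}(\tau)/\eta_h(\tau)$ with no sign corrections.

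First I would analyze the subgroup structure. Because $q \equiv 1 \bmod p$, the Sylow $p$-subgroup acts nontrivially on $\Z_q$, so any element $g \notin \Z_q$ satisfies $g^p = e$ (a short direct computation using a primitive $p$-th root of unity in $\Z_q^\times$ shows the $\Z_q$-component of $g^p$ vanishes). Thus $G$ contains exactly one identity element, $q-1$ elements lying in the unique (normal) subgroup $\Z_q$, and $q(p-1)$ elements of order $p$. The elements of order $p$ are partitioned among $q$ Sylow $p$-subgroups $P_1, \ldots, P_q$, each of order $p$ and intersecting pairwise only at the identity. By Sylow's theorem, these subgroups are all conjugate in $G$.

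Next I would use the key observation that conjugate subgroups give the same subVOA character: if $H' = \gamma H \gamma^{-1}$ for some $\gamma \in \Aut V_L$, then the map $v \mapsto \gamma v$ is a grading-preserving linear isomorphism between $V^H$ and $V^{H'}$, so $\Ch V^{H'} = \Ch V^H$. Therefore $\Ch V^{P_i} = \Ch V^{\Z_p}$ for all $i$. Applying the averaging formula on each $P_i$ and on $\Z_q$, I can solve for the partial sums
\[ \sum_{h \in \Z_q \setminus \{e\}} \tr(h \mid V) = q \Ch V^{\Z_q} - \Ch V_L, \qquad \sum_{h \in P_i \setminus \{e\}} \tr(h \mid V) = p \Ch V^{\Z_p} - \Ch V_L. \]

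Finally, I would substitute everything into the averaging formula for $\Ch V^G$:
\begin{align*}
pq \cdot \Ch V^G &= \tr(e \mid V) + \sum_{h \in \Z_q \setminus \{e\}} \tr(h \mid V) + \sum_{i=1}^q \sum_{h \in P_i \setminus \{e\}} \tr(h \mid V) \\
&= \Ch V_L + \bigl(q \Ch V^{\Z_q} - \Ch V_L\bigr) + q\bigl(p \Ch V^{\Z_p} - \Ch V_L\bigr) \\
&= q \Ch V^{\Z_q} + pq \Ch V^{\Z_p} - q \Ch V_L.
\end{align*}
Dividing by $q$ gives the claimed identity. The main (modest) obstacle is simply justifying the counting of elements of order $p$ and the equality $\Ch V^{P_i} = \Ch V^{\Z_p}$ for conjugate subgroups; the rest is bookkeeping in the averaging formula.
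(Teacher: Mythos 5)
Your proposal is correct and follows essentially the same route as the paper's proof: count the elements of $\Z_q \rtimes \Z_p$ via Sylow theory (one normal $\Z_q$, $q$ conjugate Sylow $p$-subgroups meeting in the identity) and substitute into the averaging formula for $\Ch V^H$. Your explicit justification that conjugate subgroups yield equal characters is a point the paper leaves implicit, but it does not change the argument.
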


\begin{proof}
Since $q >p$, this forces the existence of a unique $q$-Sylow subgroup. Since the group is not abelian and $q \equiv 1 \mod{p}$, the number of $p$-Sylow subgroups must be equal to $q$. Thus there are $(q-1)p$ elements of exact order $p$ and $q$ remaining elements of order dividing $q$.
Thus we have
\[ \Ch  V^{\Z_q \rtimes \Z_p}=\frac{1}{pq}\sum\limits_{g\in \Z_q \rtimes \Z_p} \tr (g| V_{L})=\frac{1}{pq}\left( 1\, \sum\limits_{g\in \Z_q} \tr (g| V_{L}) + q\sum\limits_{g\in \Z_p} \tr (g| V_{L}) -q \tr (e| V_{L})\right). \]

Now, $\Ch  V^{\Z_p}=\frac1p \sum_{g\in \Z_p}\tr (g| V_{L})$ and $\Ch  V^{\Z_q}=\frac1q \sum_{g\in \Z_q}\tr (g| V_{L})$, so we can rewrite the above as
\[ \Ch  V^{\Z_q \rtimes \Z_p}=\frac{1}{pq}\left( q\Ch  V^{\Z_q} + pq\Ch  V^{\Z_p} -q \Ch  V_{L}\right), \]
and thus 
\[ p\Ch  V^{\Z_q \rtimes \Z_p}= \Ch  V^{\Z_q} +  p\Ch  V^{\Z_p} - \Ch  V_{L}. \]
\end{proof}

In fact, these propositions hold more generally because $V$ need not be a lattice-VOA. Since we have assumed the lattice automorphism groups have no elements with order doubling, these propositions hold if one simply lets $G$ be a group of automorphisms of some strongly rational VOA $V$.

\begin{remark}
Note the formal similarities between Theorem~\ref{thm: charpq} and Proposition 3.4 of \cite{GK19},  in which they prove a similar statement about the characters of the $\Z_q \rtimes \Z_p$-orbifolds of $V_L$. Since the only holomorphic $C_2$-cofinite VOAs of CFT-type with central charge $c < 24$ are the lattice-VOAs $V_{E_8}$, $V_{D_{16}^+}$ and $V_{E_8^2}$ \cite{DM04}*{Theorems 1 and 2}, in the case of lattices $L$ of rank $N < 24$, the orbifolds of the lattice-VOA $V_L$ simply recover $V_L$ again, and so the statement of \cite{GK19}*{Proposition 3.4} becomes trivial in these cases. In contrast, the statement of Theorem~\ref{thm: charpq} regarding characters of fixed subVOAs can be applied to low rank lattices and we give an example below for the $E_8$ lattice to illustrate the result.
\end{remark}

\begin{Example}
The group generated by $h_1=(1, 2, 5, 3, 7, 6, 4)$ and $h_2=(2, 5, 7)(3, 4, 6)$ forms a subgroup  $H \cong \Z/7
\Z \rtimes \Z/3\Z$ of order $21$ in $\Aut{\mathcal{H}}$.
Let $H_1 = \langle h_1 \rangle \cong \Z/7\Z$ and $H_2 = \langle g_2 \rangle \cong \Z/3\Z$. Thus,
\[ 3\Ch V^{H} = \Ch V^{H_1}+ 3\Ch  V^{H_2}-\Ch  V_{E_8}.\]
This follows since $H$ has 14 elements of order 3, 6 elements of order 7, and one element of order 1. The elements of order 3 can be arranged into 7 subgroups of order 3 intersecting at the identity, similarly the remaining $6+1$ elements of order dividing 7 can be form a subgroup of order $7$. In particular, we find:
\begin{align*} \Ch V^{H} &= q^{-1/3}(1 + 22q + 242q^2 + 1762q^3 + 10460q^4 + 51078q^5 + 217266q^6 + \dots) \\
\Ch V^{H_1} &= q^{-1/3}(1 + 38q + 596q^2 + 4974q^3 + 
    30468q^4 + 151102q^5 + 647298q^6 + \dots) \\
 \Ch V^{H_2}&=q^{-1/3}(1 + 92q + 1418q^2 + 11688q^3 + 
    71346q^4 + 353212q^5+ 1511748q^6 + \dots) 
 \end{align*}
and finally, we have that $\Ch V^{H_1}+ 3\Ch  V^{H_2}-\Ch  V_{E_8}$
\begin{align*}  &= q^{-1/3}(3 + 66q + 726q^2+ 
    5286q^3 + 31380q^4 + 153234q^5+  651798q^6 + \dots)  \\ 
    & = 3\Ch V^{H}.\end{align*}
\end{Example}

\begin{Theorem}
 \label{thm: charp2q} Let $p$ and $q$   be primes such that $q>p$ and let $H$ be a non-abelian subgroup of order $p^2q$ of the automorphism group of an even positive definite lattice $L$. Then the characters of the fixed point subVOAs of the lattice-VOA $V_L$ satisfy the following relation
\begin{eqnarray} q\Ch V^{\Z_{p^2} \rtimes \Z_q} &= &\Ch V^{\Z_{p^2}} +  q\Ch V^{\Z_q} - \Ch V_{L}, \quad \text{ if } H \cong \Z_{p^2} \rtimes \Z_q \\ 
p^2 \Ch V^{\Z_q \rtimes \Z_{p^2}}&=& p^2 \Ch V^{\Z_{p^2}} +  \Ch V^{\Z_q} - \Ch V_{L},  \quad \text{ if } H \cong \Z_{q} \rtimes \Z_{p^2} \end{eqnarray}
\end{Theorem}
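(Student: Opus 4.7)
The plan is to follow the same orbit-counting strategy as in the proof of Theorem~\ref{thm: charpq}, starting from
\[ |H|\Ch V^H \;=\; \sum_{g\in H}\tr(g\mid V_L), \]
and using that $\tr(g\mid V_L)$ is a class function on $H\subseteq\Aut V_L$, so conjugate elements contribute equally. The goal is to partition the sum according to Sylow subgroup membership and then consolidate the contributions from conjugate Sylow subgroups.

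First I would pin down the Sylow structure in each case via Sylow's theorems combined with the non-abelian hypothesis. For the second identity ($H\cong \Z_q\rtimes\Z_{p^2}$), the $q$-Sylow $Q\cong\Z_q$ is normal ($n_q=1$), and the non-abelian hypothesis forces $n_p=q$ conjugate Sylow $p$-subgroups $P_1,\dots,P_q$, each isomorphic to $P\cong\Z_{p^2}$. Symmetrically, for the first identity ($H\cong \Z_{p^2}\rtimes\Z_q$), the $p$-Sylow $P$ is normal ($n_p=1$); the non-abelian hypothesis together with $q>p$ and the constraints $n_q\mid p^2$, $n_q\equiv 1\pmod q$ then forces $n_q=p^2$ conjugate Sylow $q$-subgroups. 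The key structural fact driving the formula is that distinct Sylow subgroups of the same prime intersect only at the identity. This is automatic for the $q$-Sylows (prime order), and for the $p$-Sylows of order $p^2$ in the second identity it is equivalent to faithfulness of the semidirect-product action, which in turn is equivalent to $H$ having no elements of composite order $pq$ or $p^2q$. Under this trivial-intersection property, each non-identity element of $H$ lies in exactly one Sylow subgroup.

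The counting is then immediate. For the second identity,
\[ \sum_{g\in H}\tr(g) \;=\; \sum_{g\in Q}\tr(g) \;+\; \sum_{i=1}^{q}\sum_{g\in P_i\setminus\{e\}}\tr(g) \;=\; q\Ch V^Q + q\bigl(p^2\Ch V^P - \Ch V_L\bigr), \]
where the class-function property gives $\sum_{g\in P_i}\tr(g)=p^2\Ch V^P$ for every $i$. Setting this equal to $|H|\Ch V^H=p^2q\Ch V^H$ and dividing by $q$ produces $p^2\Ch V^H = p^2\Ch V^P + \Ch V^Q - \Ch V_L$. The first identity is handled by the same bookkeeping with the roles of $P$ and $Q$ exchanged: the decomposition yields $\sum_{g\in H}\tr(g) = p^2\Ch V^P + p^2\bigl(q\Ch V^Q - \Ch V_L\bigr)$, and dividing by $p^2$ gives $q\Ch V^H = \Ch V^P + q\Ch V^Q - \Ch V_L$.

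The main obstacle is Sylow bookkeeping in the second identity, namely verifying the trivial-intersection property of the Sylow $p$-subgroups of order $p^2$. This is what allows $H\setminus\{e\}$ to be partitioned cleanly into Sylow-subgroup complements and prevents correction terms from elements of mixed order. Once that structural input is secured, both identities follow at once from the same counting argument that underlies Theorem~\ref{thm: charpq}.
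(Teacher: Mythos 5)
Your strategy is the one the paper uses: expand $|H|\Ch V^H=\sum_{g\in H}\tr(g\mid V_L)$, partition $H$ via its Sylow subgroups, and use that the trace is a class function so that conjugate Sylow subgroups contribute equally; for the first identity your argument coincides with the paper's worked case. The problem is the second identity, which the paper dispatches with ``the proof for the latter case follows similarly,'' and which you reduce to the statement that the $q$ conjugate Sylow $p$-subgroups of order $p^2$ pairwise intersect trivially. You correctly observe that this is equivalent to faithfulness of the action of $\Z_{p^2}$ on $\Z_q$, you call it ``the main obstacle,'' and then you never prove it. It is not provable from the stated hypotheses: if $q\equiv 1\pmod p$ but $q\not\equiv 1\pmod{p^2}$, every non-abelian $\Z_q\rtimes\Z_{p^2}$ has action with kernel $\Z_p$; the generator of that kernel is central, hence lies in \emph{every} Sylow $p$-subgroup, and $H$ contains $(p-1)(q-1)$ elements of order $pq$ that lie in no Sylow subgroup at all. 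Concretely, for $p=2$, $q=5$, take $H=\langle y,x\mid y^5=x^4=1,\ xyx^{-1}=y^{-1}\rangle\cong\mathrm{Dic}_5$: the element $x^2$ is central and lies in all five Sylow $2$-subgroups, and $x^2y,\dots,x^2y^4$ have order $10$. Your partition of $H\smallsetminus\{e\}$ then fails, and running the count anyway leaves the error term $\sum_{a\neq 0}\tr(x^2y^a\mid V_L)-(q-1)\tr(x^2\mid V_L)$, which is not zero in general since the elements $x^2y^a$ are not conjugate to the central element $x^2$.

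So the gap is genuine: the second identity needs either the additional hypothesis that the Sylow $p$-subgroups of $H$ intersect trivially (equivalently, that $H$ has no element of order $pq$, which forces $q\equiv 1\pmod{p^2}$), or a corrected formula accounting for the order-$pq$ elements. To be fair, the paper's own proof hides exactly the same hole behind ``follows similarly''; your write-up is more transparent in that it isolates the missing input, but as written that input is missing and is false for some groups satisfying the stated hypotheses. (A side remark on the first identity: a non-abelian $\Z_{p^2}\rtimes\Z_q$ with normal $\Z_{p^2}$ requires $q\mid p(p-1)$, which is impossible for $q>p$, so that case is vacuous for both you and the paper; your Sylow bookkeeping for it is internally consistent and harmless.)
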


Much of the following proof is a standard exercise in group theory, however we record the details here for completeness.
\begin{proof} The number $n_q$ of $q$-Sylow subgroups of $H$ is either $1$ or $p^2$, since by assumption $p < q$ and therefore we cannot have $p \equiv 1 \bmod{q}$. If $n_q = p^2$ then $H$ contains $p^2$ subgroups of order $q$ that pairwise intersect in the identity element, thereby giving $(q-1)p^2$ elements of exact order $q$. This leaves $p^2q -(q-1)p^2 = p^2$ remaining elements, all of which must be contained in a single Sylow-$p$ subgroup which is necessarily normal in $H$. If instead $n_q = 1$, then the unique Sylow-$q$ subgroup is normal in $H$ and since $H$ is not abelian, we must have $n_p = q$ (and thus can only occur if $q \equiv 1 \bmod{p}$). In the former case, we have 
\[ \Ch V^H = \frac{1}{p^2 q}  \sum_{g \in H} \tr(g|V_L) = \frac{1}{p^2q} \left( \sum\limits_{g\in \Z_{p^2}} \tr (g | V_{L}) + p^2\sum\limits_{g\in \Z_q} \tr (g| V_{L}) -p^2 \tr (e| V_{L})\right) \]
Since $\Ch  V^{\Z_{p^2}}=\frac{1}{p^2} \sum\limits_{g\in \Z_p}\tr(g| V_{L})$ and $\Ch  V^{\Z_q}=\frac1q \sum\limits_{g\in \Z_q}\tr (g| V_{L})$, we can rewrite the above as
\[ \Ch  V^H=\frac{1}{p^2q}\left( p^2\Ch  V^{\Z_q} + p^2q\Ch  V^{\Z_p} -p^2 \Ch  V_{L}\right), \]
and thus 
\[ q\Ch  V^{H}= \Ch  V^{\Z_{p^2}} +  q\Ch  V^{\Z_q} - \Ch  V_{L}. \]
The proof for the latter case follows similarly.
\end{proof}

\begin{Question} Is there an analogous statement to Theorem \ref{thm: charpq} and Theorem \ref{thm: charp2q} that gives a decomposition of theta series of fixed sublattices under a group $G$ into theta series of fixed sublattices under subgroups of $G$? Alternatively, can a similar statement be made about the dimensions of the fixed sublattices under $G$ and its subgroups?
\end{Question}

\section{Questions for further study} \label{sec: questions}
We conclude by listing several additional questions for further study.

\begin{Question}
If one applies constructions other than Construction A to doubly even, self-dual linear codes (for example, Constructions B, C, or D), how do the replicable functions and graded characters compare to those that arise from Construction A? 
\end{Question}

\begin{Question} If one applies Construction A to a subcode fixed by an element $\bar{g} \in \Aut(C)$, the resulting lattice is distinct from the fixed sublattice $L^g$. Similarly, the lattice-VOA constructed from applying the lattice-VOA construction to (a potentially scaled version of) the fixed sublattice $L^g$ is not the same as the lattice-VOA $V_L^{\hat{g}}$. 
Can one quantify the relationship (if any) between the two lattices (resp. VOAs)?
\end{Question}

\begin{Question}
Consider the theta quotients associated to an automorphism of a lattice from Section~\ref{sec: repfunctionscodelats}, that is, the quotients of the theta functions of the fixed sublattices by the eta products based on the cycle type of the automorphism. Is there an analogue when one considers indefinite lattices that can be decomposed into positive definite and negative definite parts where the theta quotients are replaced by theta blocks of the form below?

\[ \dfrac{\theta_{L_1}(\tau) \theta_{L_2}(\tau)}{\eta(\tau)}\]

In a personal communication, J. Lagarias remarked that theta blocks are similar to the theta quotients considered by the authors and may be related to this work (see \cite{GPSZ19} for the definition of theta block in general). The authors speculate the aforementioned connection with indefinite lattices, but there may be a different connection altogether. 
\end{Question}

\begin{Question}
Can one characterize how the structure of a subgroup of $\Aut(C)$ changes when lifted to a subgroup of VOA automorphisms in cases where elements of the subgroup have order doubling? Is there anything to be learned from restricting our attention only to lattice automorphisms that come from code automorphisms?
\end{Question}

\begin{Question}
Are there characteristics of the fixed subVOAs under lifted code automorphisms that can be deduced from the fixed subcode under that automorphism? For example, the conformal weights of twisted modules of lattice-VOAs by VOA automorphisms depends on quantities related to the lattice and the cycle type of the automorphism. Can this or other values be computed using properties of the fixed subcodes?
\end{Question}

\bibliographystyle{alpha}
\bibliography{ref}

\end{document}